\definecolor{verylight}{gray}{0.97}
\definecolor{light}{gray}{0.9}
\definecolor{medium}{gray}{0.85}
\definecolor{dark}{gray}{0.6}
 \def\NZQ{\mathbb}               
 \def\ZZ{{\NZQ Z}}
 \def\DD{{\NZQ D}}
 \def\FF{{\NZQ F}}
 \def\GG{{\NZQ G}}
 \def\HH{{\NZQ H}}
 \def\EE{{\NZQ E}}
 \def\frk{\mathfrak}               
 \def\mm{{\frk m}}
 \def\nn{{\frk n}}
 \def\Jc{{\mathcal J}}
 \def\G{{\mathcal G}}
 \def\B{{\mathcal B}}
 \def\P{{\mathcal P}}
 \def\I{{\mathcal I}}
 \def\J{{\mathcal J}}
 \def\U{{\mathcal U}}
 \def\X{{\mathcal X}}
 \def\A{{\mathcal A}}
 \def\Sc{{\mathcal S}}
 \def\ab{{\mathbf a}}
 \def\bb{{\mathbf b}}
 \def\xb{{\mathbf x}}
 \def\cb{{\mathbf c}}
 \def\0b{{\mathbf 0}}
 \def\opn#1#2{\def#1{\operatorname{#2}}} 
 \opn\chara{char} \opn\length{\ell} \opn\pd{pd} \opn\rk{rk}
 \opn\projdim{proj\,dim} \opn\injdim{inj\,dim} \opn\rank{rank}
 \opn\depth{depth} \opn\grade{grade} \opn\height{height}
 \opn\embdim{emb\,dim} \opn\codim{codim}
 \opn\Tr{Tr} \opn\bigrank{big\,rank}
 \opn\superheight{superheight}\opn\lcm{lcm}
 \opn\trdeg{tr\,deg}
 \opn\reg{reg} \opn\lreg{lreg} \opn\ini{in} \opn\lpd{lpd}
 \opn\size{size} \opn\sdepth{sdepth}
 \opn\link{link}\opn\fdepth{fdepth}\opn\lex{lex}
 \opn\tr{tr}
 \opn\type{type}
 \opn\gap{gap}
 \opn\arithdeg{arith-deg}
 \opn\HS{HS}
 \opn\tet{tet}
 \opn\div{div} \opn\Div{Div} \opn\cl{cl} \opn\Cl{Cl}
 \opn\Spec{Spec} \opn\Supp{Supp} \opn\supp{supp} \opn\Sing{Sing}
 \opn\Ass{Ass} \opn\Min{Min}\opn\Mon{Mon}
 \opn\Ann{Ann} \opn\Rad{Rad} \opn\Soc{Soc}\opn\Deg{Deg} \opn\Gen{Gen}
 \opn\Im{Im} \opn\Ker{Ker} \opn\Coker{Coker} \opn\Am{Am}
 \opn\Hom{Hom} \opn\Tor{Tor} \opn\Ext{Ext} \opn\End{End}
 \opn\Aut{Aut} \opn\id{id}
 \opn\nat{nat}
 \opn\pff{pf}
 \opn\Pf{Pf} \opn\GL{GL} \opn\SL{SL} \opn\mod{mod} \opn\ord{ord}
 \opn\Gin{Gin} \opn\Hilb{Hilb}\opn\sort{sort}
 \opn\PF{PF}\opn\Ap{Ap}
 \opn\mult{mult}
 \opn\bight{bight}
 \opn\aff{aff}
 \opn\relint{relint} \opn\st{st}
 \opn\lk{lk} \opn\cn{cn} \opn\core{core} \opn\vol{vol}  \opn\inp{inp} \opn\nilpot{nilpot}
 \opn\link{link} \opn\star{star}\opn\lex{lex}\opn\set{set}
 \opn\width{wd}
 \opn\Fr{F}
 \opn\QF{QF}
 \opn\G{G}
 \opn\type{type}\opn\res{res}
 \opn\conv{conv}
 \opn\Ind{Ind}
 \opn\soc{soc}
 \opn\gr{gr}
 \def\pot#1#2{#1[\kern-0.28ex[#2]\kern-0.28ex]}
 \opn\dirlim{\underrightarrow{\lim}}
 \opn\inivlim{\underleftarrow{\lim}}
 \let\dirsum=\oplus
 \let\tensor=\otimes
 \let\iso=\cong
 \let\to=\rightarrow
 \let\To=\longrightarrow
 \def\Implies{\ifmmode\Longrightarrow \else
         \unskip${}\Longrightarrow{}$\ignorespaces\fi}
 \def\implies{\ifmmode\Rightarrow \else
         \unskip${}\Rightarrow{}$\ignorespaces\fi}
 \def\iff{\ifmmode\Longleftrightarrow \else
         \unskip${}\Longleftrightarrow{}$\ignorespaces\fi}
 \newtheorem{Theorem}{Theorem}[section]
 \newtheorem{Lemma}[Theorem]{Lemma}
 \newtheorem{Corollary}[Theorem]{Corollary}
 \newtheorem{Proposition}[Theorem]{Proposition}
 \theoremstyle{definition}
 \newtheorem{Remark}[Theorem]{Remark}
 \newtheorem{Example}[Theorem]{Example}
 \let\epsilon\varepsilon
 \let\kappa=\varkappa
 \opn\dis{dis}
 \def\pnt{{\raise0.5mm\hbox{\large\bf.}}}
 \opn\Lex{Lex}
\newcommand{\precdot}{\prec\mathrel{\mkern-3mu}\mathrel{\cdot}}
\begin{document}

\title{Rings of Teter type}
\author{ Oleksandra Gasanova, J\"urgen Herzog, Takayuki Hibi, Somayeh Moradi }
\date{\today }

\address{Oleksandra Gasanova, Department of Mathematics, Uppsala University, SE-75106 Uppsala, Sweden}
\email{oleksandra.gasanova@math.uu.se}

\address{J\"urgen Herzog, Fakult\"at f\"ur Mathematik, Universit\"at Duisburg-Essen, 45117
Essen, Germany} \email{juergen.herzog@uni-essen.de}

\address{Takayuki Hibi, Department of Pure and Applied Mathematics,
Graduate School of Information Science and Technology, Osaka
University, Suita, Osaka 565-0871, Japan}
\email{hibi@math.sci.osaka-u.ac.jp}

\address{Somayeh Moradi, Department of Mathematics, School of Science, Ilam University,
P.O.Box 69315-516, Ilam, Iran}
\email{so.moradi@ilam.ac.ir}

\thanks{This paper was written during the visit of the first, the third and the fourth author to the department of Mathematics, University of Duisburg-Essen, Germany. They would like to thank the second author Professor J\"urgen Herzog for his support and hospitality.
The first author's stay was supported by Liljewalchs and Thelins travel scholarships from Uppsala University.
The third author was partially supported by JSPS KAKENHI 19H00637.
The fourth author was supported by the CIMPA's research in pairs fellowship and a partial grant from Ilam University.}

\subjclass[2010]{Primary 13H10; Secondary 13M05.}
\keywords{Teter rings, canonical trace, rings of Teter type, $0$-dimensional monomial algebras}

\maketitle

\begin{abstract}
Let $R$ be a Cohen--Macaulay local $K$-algebra or a standard graded $K$-algebra over a field $K$ with a canonical module $\omega_R$.  The trace of $\omega_R$ is the ideal $\tr(\omega_R)$ of $R$ which is the sum of those ideals $\varphi(\omega_R)$ with ${\varphi\in\Hom_R(\omega_R,R)}$.  The smallest number $s$ for which there exist $\varphi_1, \ldots, \varphi_s \in \Hom_R(\omega_R,R)$ with $\tr(\omega_R)=\varphi_1(\omega_R) + \cdots + \varphi_s(\omega_R)$ is called the Teter number of $R$.  We say that $R$ is of Teter type if $s = 1$.  It is shown that $R$ is not  of Teter type if $R$ is generically Gorenstein.  In the present paper, we focus especially on $0$-dimensional graded and  monomial $K$-algebras and present  various classes of such algebras  which are of Teter type.
\end{abstract}

\section*{Introduction}

Let $K$ be a field and   $R$  a Noetherian local ring or a standard graded $K$-algebra with (graded) maximal ideal $\mm$.
We will always assume that $R$ is Cohen--Macaulay and admits a canonical module $\omega_R$. The {\em trace} of an $R$-module $M$ is defined to be the ideal
\[
\tr_R(M)=\sum_{\varphi\in\Hom_R(M,R)}\varphi(M)\subseteq R.
\]
We are interested in the trace of $\omega_R$, which is called the  {\em canonical trace} of $R$.  The canonical trace determines the non-Gorenstein locus of $R$. Indeed, $R_P$ is not Gorenstein if and only if $\tr(\omega_R)\subseteq P$.

The ring $R$ is called {\em nearly  Gorenstein},  if $\mm\subseteq \tr_R(\omega_R)$. This class of rings have first been considered in \cite{HV}.  The name “nearly Gorenstein” was introduced in \cite{HHS}.  A $0$-dimensional local ring $R$ is called a {\em Teter} ring, if there exists a local Gorenstein ring $G$ such that $R\iso G/(0:\mm_G)$. This class of rings has been introduced 1974 by William  Teter \cite{T}. It has been shown (see \cite{T},\cite{HV} and \cite{ET})  that $R$ is  a Teter ring if and only if there exists an epimorphism $\varphi\: \omega_R\to \mm$. This result shows  that a Teter ring is nearly Gorenstein.

On the other hand, a $0$-dimensional local ring which is nearly Gorenstein need not be  a Teter ring, see \Cref{EXAMPLE_simplicial_complex}.  Indeed, if $R$ is nearly Gorenstein but not Gorenstein, then in general  several  $R$-module homomorphisms $\varphi\: \omega_R\to R$ are required to cover $\mm$.  More generally, we define the {\em Teter number} of $R$ to be the smallest number $s$ for which  there exist $R$-module homomorphisms $\varphi_i\: \omega_R\to R$ such that $\tr(\omega_R)=\sum_{i=1}^s\varphi_i(\omega_R)$. Thus, if $R$ is not Gorenstein, then $R$ is a Teter ring if and only if $R$ is nearly Gorenstein with Teter number $1$. This fact leads us to call a non-Gorenstein  ring $R$ to be of {\em Teter type} if its Teter number is $1$.

In Section 1 we show that if $R$ is generically Gorenstein,  then $R$ is not  of Teter type. Therefore, since we are interested in rings of Teter type, we assume throughout the rest of the paper that $\dim R=0$. Actually, we are more specific and always assume that $R$ is a $0$-dimensional local $K$-algebra, where $K$ is a field.

In Section 2 we introduce $\tau$-ideals and their companions. We call an ideal $I\subset R$ a {\em $\tau$-ideal} ($\tau$ stands for {\bf T}eter), if there exists an epimorphism $\varphi:\omega_R\to I$. Let $I\subset R$ be an ideal. A {\em companion} of $I$ is an ideal $J\subset R$ such that $J\iso I^\vee$. Here, for an $R$-module $M$ we set $M^\vee=\Hom_R(M,\omega_R)$.  In \Cref{taucompanion} it is shown that $I$ is a $\tau$-ideal if and only if $I$ has a companion $J$, in which case $J$ is also a $\tau$-ideal. An ideal $I$ is called {\em symmetric}  if there exists an isomorphism $I\iso I^\vee$. In  \Cref{tau} it is noted that any symmetric ideal is a $\tau$-ideal. The converse is  not true in general. \Cref{natural} provides a natural $\tau$-ideal of $R$ in the case that  $R=G/J$,  where $G$ is a $0$-dimensional Gorenstein $K$-algebra and $J\subset G$ is a non-zero ideal. A condition is given when this $\tau$-ideal is symmetric. This condition  is used in the proof of \Cref{completedim2},  which is one of the main results of this section. In \Cref{easy} it is shown that $R$ is of Teter type if and only if $\tr(\omega_R)$ is symmetric.  From a computational point of view, this condition is not so easy to check. But if $R$ is a $0$-dimensional standard graded  or a monomial  $K$-algebra,  more tools are available. The basic observation is formulated in \Cref{resume}. It is stated that if  $R$ is a $0$-dimensional standard (multi)graded $K$-algebra,  then $\tr(\omega_R)=\sum_{\varphi\in  \Sc}\varphi(\omega_R)$, where $\Sc$ is the set of (multi)graded   $R$-module homomorphisms $\varphi\: \omega_R\to R$ of any (multi)degree. This result is very important for the remaining sections which are devoted to the study of monomial $K$-algebras. It allows us to study ideals of Teter type in terms of the underlying divisor posets, and it suggests to call a (multi)graded ring to be of Teter type in the (multi)graded sense if there exists a (multi)graded epimorphism $\varphi\: \omega_R\to \tr(\omega_R)$. It is clear that Teter type in the multigraded sense implies Teter type in the graded sense and this  implies  Teter type in the local sense. Examples show that these implications are strict.
An observation of Vasconcelos \cite{V} implies that if $\FF$ is a free $S$-resolution of $R$ and $C$ is the kernel of $F_n\tensor_SR\to F_{n-1}\tensor_SR$, then its generators can be identified with column vectors, and the  entries of these column vectors generate the trace of $\omega_R$. In the multigraded case this fact can be used to describe an algorithm which allows to decide whether the ring is of Teter type, see \Cref{stronger}. We conclude Section 2 with \Cref{completedim2},  in which we show that if $R$ is a $0$-dimensional graded complete intersection of embedding dimension $2$ with graded maximal ideal $\mm_R$, then $R/\mm_R^k$ is of Teter type for all $k\geq 2 $ for which $\mm_R^k\neq 0$.

In Sections $3, 4$ and $5$, we focus especially on  a $0$-dimensional monomial $K$-algebra $R = S/I$, where $S = K[x_1, \ldots, x_n]$ is the polynomial ring and $I$ is a monomial ideal of $S$. We demonstrate various classes of $0$-dimensional monomial $K$-algebras of Teter type.  A $0$-dimensional monomial $K$-algebra $R = S/I$ has a canonical monomial basis $\P$, which consists of those monomials $u \in S$ with $u \not\in I$.  Then $\P$ is endowed with a structure of a poset (partially ordered set), ordered by divisibility.  The poset $\P$ is called the {\em divisor poset} of $R$.  The dual basis $\P^*$ is the monomial basis of $\omega_R$. This basis can be endowed with a poset structure which is exactly the dual of the poset structure of $\P$.  In Section $3$, the fundamental material presented in Sections $1$ and $2$ is discussed in the category of multigraded $K$-algebras in terms of poset language.  One of the crucial facts in the category of multigraded $K$-algebras is that the union of a $\tau$-ideal and any of its companions  is always symmetric (\Cref{MIGHT_BE_TRUE}).  Let $R = S/(x_1^{a_1+1}, \ldots, x_n^{a_n+1})$ be  a monomial  complete intersection, where $1 \leq a_1 \leq \cdots \leq a_n$, and $\mm_R$ is the graded maximal ideal of $R$.  One of the main results of Section $3$ is that the quotient ring $R/(0:\mm_R^k)$ is of Teter type if $k \leq a_1$ (\Cref{Chopin}).  Furthermore, when $a_1 = \cdots = a_n = 1$, it is shown that $R/(0:\mm_R^k)$ is of Teter type if and only if $k \leq n - k$ (\Cref{Mozart}).

Almost complete intersections of Teter type are studied in Section $4$.  Let $S = K[x_1, \ldots, x_n]$ and $I = (x_1^{a_1}, \ldots, x_n^{a_n}, x_1^{b_1}\cdots x_n^{b_n})$, where $b_i < a_i$ for all $i$ and $b_i > 0$ for at least two integers $i$.  In \Cref{tracemaci} it is shown that $\tr(\omega_R)$ of $R = S/I$ is generated by those monomials $x_i^{a_i - b_i}$ for which $b_i > 0$ and by $w/x_i^{b_i}$, where $w = x_1^{b_1}\cdots x_n^{b_n}$.  Our proof heavily depends on algebraic techniques.  It would be of interest to find a simple combinatorial proof of \Cref{tracemaci}.  By virtue of \Cref{tracemaci}, one can classify almost complete intersection monomial algebras of Teter type.  In fact, \Cref{compint} says that $R = S/I$ is of Teter type if and only if there exist $j \neq j'$ for which $2b_j \geq a_j$ and $2b_{j'} \geq a_{j'}$.

Finally, Section $5$ is devoted to the study of the divisor posets of simplicial complexes.  Let $\Delta$ be a simplicial complex on $[n] = \{1,2,\ldots,n\}$.  Let $S = K[x_1, \ldots, x_n]$ and $I_\Delta$ be the Stanley--Reisner ideal of $\Delta$ (\cite[pp.~16]{HH}).  We study the $0$-dimensional monomial $K$-algebra $K\{\Delta\} = S/(I_\Delta, x_1^{2}, \ldots, x_n^{2})$.  We associate each $F \subset [n]$ with the squarefree monomial $u_F = \prod_{i \in F} x_i$.  The divisor poset $\P_\Delta$ of $K\{\Delta\}$ is the finite set $\{ u_F : F \in \Delta\}$ with the partial order $\preceq$ defined by $u_F \preceq u_{F'}$ if $F' \subseteq F$.  A face $F$ of $\Delta$ is called {\em free} if there is a unique facet $F'$ of $\Delta$ with $F \subset F'$. A simplicial complex $\Delta$ is called {\em flag} if any minimal non-face of $\Delta$ is of cardinality $2$.
When $\Delta$ is flag, it is shown that $\tr(\omega_{K\{\Delta\}})$ is generated by the monomials $u_F$ for which $F$ is a free face (\Cref{trace_of_squarefree_divisor_poset}).  As a special case of flag complexes  we consider the order complex $\Delta(L)$ of a finite distributive lattice $L$.  It is proved that each facet $F$ of $\Delta(L)$ admits a face $F^\sharp \in \Delta$ with the property that $F$ is a unique facet containing $F^\sharp$. It also has the property that if $F$ is a unique facet containing a face $F' \in \Delta$, then $F^\sharp \subset F'$ (\Cref{distributive_trace}).  Thus, in particular,  the $K$-algebra $K\{\Delta(L)\}$ is of Teter type in the category of $0$-dimensional local $K$-algebras. Furthermore, we discuss independence complexes of path graphs and cycle graphs.

\section{Local rings of Teter type and Teter numbers}

In this section we show that domains  of Teter type do not exist.  Indeed, more generally  we have

\begin{Theorem}
\label{only}
If $R$ is generically Gorenstein, then $R$ is not  of Teter type. \\
\end{Theorem}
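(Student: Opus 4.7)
The plan is to suppose for contradiction that $R$ is of Teter type, i.e.\ that there exists a single $\varphi\in\Hom_R(\omega_R,R)$ whose image is $I:=\tr(\omega_R)$, and then to derive a contradiction by computing $\Ext^1_R(R/I,R)$ in two incompatible ways.

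First I would use the generic Gorenstein hypothesis to extract two properties of $\varphi$. Since $R$ is Cohen--Macaulay we have $\Ass R=\Min R=\Ass\omega_R$, and at each $P\in\Min R$ the ring $R_P$ is Gorenstein, so $(\omega_R)_P\cong R_P$ and $\tr(\omega_R)_P=R_P$. Thus $I$ is not contained in any associated prime of $R$ and hence contains a nonzerodivisor, while the localized map $\varphi_P\colon R_P\twoheadrightarrow R_P$ is a surjection of Artinian local modules of equal length and therefore an isomorphism. Since $\Ass(\ker\varphi)\subseteq\Ass\omega_R=\Min R$, vanishing at every minimal prime forces $\ker\varphi=0$, and $\varphi$ becomes an isomorphism $\omega_R\xrightarrow{\sim} I$.

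Next I would pin down the grade of $I$. Because $R$ is non-Gorenstein (a standing hypothesis built into the definition of Teter type) one has $I\subsetneq R$, and since $I\cong\omega_R$ is maximal Cohen--Macaulay the depth lemma applied to $0\to I\to R\to R/I\to 0$ yields $\depth(R/I)\geq\dim R-1$. Combined with $I$ containing a nonzerodivisor this forces $\height I=\grade I=1$, so $\Ext^1_R(R/I,R)\neq 0$.

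To obtain the contradiction I would then show that $\Ext^1_R(R/I,R)=0$. Every $\psi\in\Hom_R(\omega_R,R)$ takes values in $\tr(\omega_R)=\varphi(\omega_R)$, so $\varphi^{-1}\circ\psi$ lies in $\End_R(\omega_R)=R$, giving $\psi=r\varphi$ for a unique $r\in R$; the assignment $r\mapsto r\varphi$ is injective because $I$ contains a nonzerodivisor. Hence $\Hom_R(\omega_R,R)=R\cdot\varphi$ is free of rank one. Applying $\Hom_R(-,R)$ to $0\to\omega_R\xrightarrow{\varphi}R\to R/I\to 0$, the map $\Hom_R(R,R)\to\Hom_R(\omega_R,R)$ sends $\id_R$ to $\varphi$ and thus corresponds, under the identifications $\Hom_R(R,R)=R$ and $\Hom_R(\omega_R,R)=R\varphi\cong R$, to the identity on $R$. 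The long exact sequence then forces $\Ext^1_R(R/I,R)=0$, contradicting the previous paragraph.

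The step I expect to be the main obstacle is the rigidity argument: first showing that a single surjection $\varphi\colon\omega_R\twoheadrightarrow\tr(\omega_R)$ is automatically injective under generic Gorensteinness, and then exploiting $\End_R(\omega_R)=R$ to conclude that $\varphi$ generates all of $\Hom_R(\omega_R,R)$ as an $R$-module. Once this rigidity is in place, the grade-one calculation and the $\Ext$ computation collide cleanly, producing the contradiction.
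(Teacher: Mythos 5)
Your proof is correct, but it takes a genuinely different route from the paper's. Both arguments start identically: assuming an epimorphism $\varphi\colon\omega_R\to\tr(\omega_R)$ exists, one shows it is forced to be injective. The paper achieves this by first localizing at a minimal prime of $\tr(\omega_R)$ so that the trace becomes $\mm$-primary, and then arguing that the kernel is supported only at $\mm$ while $\omega_R$ is maximal Cohen--Macaulay; your observation that $\Ass(\Ker\varphi)\subseteq\Ass\omega_R=\Min R$ while $\varphi_P$ is an isomorphism at every minimal prime accomplishes the same thing more directly and without any reduction. From there the two proofs diverge. The paper identifies $\omega_R$ with an ideal and manipulates the fractional ideals $\omega_R^{-1}$ and $(\omega_R^{-1})^{-1}$ (using Korollar 7.29 of Herzog--Kunz) to force $\dim R=1$ and $\omega_R^{-1}=R$, deduces $\Ext_R^1(R/\omega_R,R)=0$, and then contradicts this by exhibiting a nonzero map $R/\omega_R\to R/(y)$ onto a socle element of the Artinian reduction. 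You instead stay in arbitrary dimension: the identification $\End_R(\omega_R)\cong R$ shows $\Hom_R(\omega_R,R)=R\varphi$ is cyclic, so the map $\Hom_R(R,R)\to\Hom_R(\omega_R,R)$ in the long exact sequence is surjective and $\Ext_R^1(R/I,R)=0$, while the depth lemma gives $\grade I=\height I=1$ and hence $\Ext_R^1(R/I,R)\neq 0$ by Rees's characterization of grade. Your version trades the paper's explicit dimension-one socle construction for the grade-sensitivity of $\Ext$, which shortens the argument and eliminates the localization and fractional-ideal bookkeeping; the only inputs you use beyond the paper's are the standard facts $\End_R(\omega_R)\cong R$ and $\grade I=\min\{i:\Ext_R^i(R/I,R)\neq 0\}$. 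The degenerate case $\dim R=0$ is implicitly covered, since there generic Gorensteinness makes $R$ Gorenstein, contradicting the non-Gorenstein hypothesis built into the definition of Teter type.
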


\begin{proof}
If $\dim R=0$, then by assumption $R$ is Gorenstein and hence not of Teter type. Suppose now $\dim R>0$. Then $\height(\tr(\omega_R))>0$, since $R$ is generically Gorenstein. On the contrary, assume $R$ is of Teter type. Let $P$ be a minimal prime ideal of $\tr(\omega_R)$. Then $\dim R_P>0$. Since $\omega_R$ as well as  $\tr(\omega_R)$ localize, we may replace $R$ by $R_P$,  and hence we  may assume that $R$ is a ring of Teter type  with  $\dim R>0$ and that $\tr(\omega_R)$ is $\mm$-primary. In particular, there exists an epimorphism $\varphi\: \omega_R\to \tr(\omega_R)$.  Since $\tr(\omega_R)$ is $\mm$-primary, it follows that $R_P$ is Gorenstein for all $P\neq \mm$. This implies that $\varphi_P$ is an isomorphism for all $P\neq \mm$. Thus, if  $C$ is  the kernel of $\varphi$, then $C=0$ or $\Supp(C)=\{\mm\}$. Suppose $C\neq 0$, Then $\depth C=0$, and hence also $\depth \omega_R=0$. Since $\omega_R$ is a maximal Cohen-Macaulay module, it follows that $\dim R=0$,  a contradiction.  Thus we have shown that $C=0$ which implies that $\varphi$ is an isomorphism.

 As noticed in \cite{HHS},  $\omega_R$ can be identified with an ideal in $R$, since $R$ is generically Gorenstein  and $\tr(\omega_R)=\omega_R\cdot \omega_R^{-1}$,  where  $\omega_R^{-1}=\{x\in Q(R)\: x\omega_R\subset R\}$. Here $Q(R)$ denotes the full ring of fractions of $R$. Thus we have $\omega_R\iso \omega_R\cdot \omega_R^{-1}$.  This gives us
\[
R=\omega_R: \omega_R\iso \omega_R :(\omega_R\cdot \omega_R^{-1})=(\omega_R :\omega_R):\omega_R^{-1}=R:\omega_R^{-1}=(\omega_R^{-1})^{-1}.
\]
Since $R$ is not Gorenstein, \cite[Korollar 7.29]{HK}  implies that $\omega_R\neq (\omega_R^{-1})^{-1}$, and we obtain an exact sequence $0\to  \omega_R\to (\omega_R^{-1})^{-1}\to D\to 0$ with $D\neq 0$.  Since $R$ is Gorenstein on the punctured spectrum, the inclusion map $\omega_R\to (\omega_R^{-1})^{-1}$ becomes an isomorphism after localization with $P\in \Spec(R)\setminus \mm$. This implies that $\Supp(D)=\{\mm\}$. Hence $\depth D=0$. Since $(\omega_R^{-1})^{-1}\iso R$, it follows then that $\depth \omega_R=1$. Thus, $\dim R=1$.

Since $\omega_R\subset (\omega_R^{-1})^{-1}\iso R$, we deduce  that $(\omega_R^{-1})^{-1} =(x)$ for some  $x\in R$  and  that $\omega_R\subset (x)$. Hence, $\omega_R=xL$ for some ideal $L$. Since $\Ann(\omega_R)=(0)$, the element $x$ must be a non-zerodivisor. Therefore, $L$ is also a canonical  ideal of $R$. Moreover, $(x)=R:(R:xL) =x(R:(R:L))$. This shows that $R:(R:L)=R$. Hence, replacing $\omega_R$ by $L$, we may assume that $R=(\omega_R^{-1})^{-1}$. Then $\omega_R^{-1} =((\omega_R^{-1})^{-1})^{-1}=R^{-1}=R$.

The exact sequence $0\to \omega_R\to R \to R/\omega_R\to\ 0$ induces the exact sequence
\[
0\to R\to \omega_R^{-1} \to \Ext_R^1(R/\omega_R, R)\to 0.
\]
The map  $R\to \omega_R^{-1}$ is the identity, since $\omega_R^{-1}=R$. Therefore, $\Ext_R^1(R/\omega_R, R)=0$. Let $y\in \omega_R$ be a non-zerodivisor. Set $\overline{R}=R/(y)$. The exact sequence
\[
0\to R\stackrel{y}{\to} R\to \overline{R}\to 0
\]
induces the exact sequence
\[
0\to \Hom_R(R/\omega_R, \overline{R})\to \Ext_R^1(R/\omega_R, R)\to \cdots,
\]
since $\Hom_R(R/\omega_R,R)=0$. It follows that $\Hom_R(R/\omega_R, \overline{R})=0$. Notice that  $R/\omega_R$  is an $\overline{R}$-module, which implies that  $\Hom_R(R/\omega_R, \overline{R})\iso
\Hom_{\overline{R}}(R/\omega_R, \overline{R})$. Thus, $\Hom_{\overline{R}}(R/\omega_R, \overline{R})=0$.  Since $\dim \overline{R}=0$, there exists $c\in \overline{R}$ with $c\neq 0$ and $c\overline{\mm}=0$. Here $\overline{\mm}$ denotes the  maximal ideal of $\overline{R}$. The non-zero $\overline{R}$-module homomorphism $\overline{R}\to \overline{R}$, $1\mapsto c$ factorizes over $R/\omega_R$. Thus, $\Hom_{\overline{R}}(R/\omega_R, \overline{R})\neq 0$, a contradiction.
\end{proof}

\begin{Corollary}
\label{two}
Suppose that $R$ is generically Gorenstein but not Gorenstein.  Then the Teter number of $R$ is $\geq 2$.
\end{Corollary}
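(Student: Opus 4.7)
The plan is to derive the corollary directly from \Cref{only} together with the definition of Teter type. Recall from the introduction that a non-Gorenstein ring $R$ is said to be \emph{of Teter type} precisely when its Teter number equals $1$; equivalently, when there exists a single $R$-module homomorphism $\varphi : \omega_R \to R$ whose image is all of $\tr(\omega_R)$. So the content of the corollary is to rule out this possibility for generically Gorenstein, non-Gorenstein $R$.

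I would argue by contradiction. Assume the Teter number of $R$ is $1$. Since $R$ is not Gorenstein, this means, by definition, that $R$ is of Teter type. But \Cref{only} asserts that a generically Gorenstein ring is never of Teter type, contradicting our assumption. Hence the Teter number must be at least $2$. (The Gorenstein case is already excluded by hypothesis, so the trivial reduction $\omega_R \cong R$ does not apply here.)

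There is essentially no obstacle: the corollary is a purely formal consequence of \Cref{only} and the definition of Teter number. The only subtlety worth spelling out is that we invoke the convention "of Teter type $\iff$ Teter number $1$" only for non-Gorenstein rings, which is exactly the hypothesis of the corollary.
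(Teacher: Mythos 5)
Your argument is correct and is exactly the reasoning the paper intends: the corollary is stated without proof precisely because it is the immediate formal consequence of \Cref{only} combined with the convention that a non-Gorenstein ring is of Teter type if and only if its Teter number equals $1$. Nothing is missing.
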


For an ideal $I\subset R$ we denote by $\mu(I)$ the minimal number of generators of $I$. We have

\begin{Lemma}
\label{cover}
The Teter number of $R$ is $\leq \mu(\tr(\omega_R))$.
\end{Lemma}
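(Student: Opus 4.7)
The plan is to use Nakayama's lemma. Set $m = \mu(\tr(\omega_R))$ and let $V$ denote the quotient $\tr(\omega_R)/\mm\tr(\omega_R)$, which is an $(R/\mm)$-vector space of dimension $m$. Every homomorphism $\varphi \in \Hom_R(\omega_R,R)$ has image contained in $\tr(\omega_R)$, so composing with the projection $\tr(\omega_R) \to V$ gives a map whose kernel contains $\mm\omega_R$, hence an induced $(R/\mm)$-linear map $\bar\varphi : \omega_R/\mm\omega_R \to V$. By the definition of the trace, the union $\bigcup_\varphi \mathrm{Im}(\bar\varphi)$ spans $V$.

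The key step is to build the $\varphi_i$'s inductively so that the accumulated images strictly grow in $V$. First I would choose any $\varphi_1 \in \Hom_R(\omega_R,R)$ with $\bar{\varphi}_1 \neq 0$. Having chosen $\varphi_1, \ldots, \varphi_k$, set
\[
W_k \;=\; \mathrm{Im}(\bar{\varphi}_1) + \cdots + \mathrm{Im}(\bar{\varphi}_k) \subseteq V.
\]
If $W_k = V$, stop. Otherwise pick $v \in V \setminus W_k$; since the images $\mathrm{Im}(\bar\varphi)$ jointly span $V$, there is some $\varphi_{k+1}$ whose associated $\bar{\varphi}_{k+1}$ has image not contained in $W_k$, so $\dim W_{k+1} > \dim W_k$.

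This process must terminate after at most $m$ steps, because $\dim V = m$ and the dimension strictly increases at each step. Thus there exist $\varphi_1, \ldots, \varphi_s$ with $s \leq m$ and $W_s = V$. Writing $N = \sum_{i=1}^s \varphi_i(\omega_R) \subseteq \tr(\omega_R)$, the equality $W_s = V$ translates to $N + \mm\tr(\omega_R) = \tr(\omega_R)$. Nakayama's lemma (applicable since $R$ is local or, in the graded case, since $\tr(\omega_R)$ is a finitely generated module and the argument may be carried out after restricting to graded $\varphi_i$'s using \Cref{resume}) then yields $N = \tr(\omega_R)$, which shows that the Teter number is at most $s \leq m$.

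There is no real obstacle here; the only thing to check carefully is the claim that at each step, as long as $W_k \subsetneq V$, one can find a $\varphi_{k+1}$ whose induced map escapes $W_k$. This follows immediately from the fact that the subspaces $\mathrm{Im}(\bar\varphi)$ collectively span $V$: if every $\mathrm{Im}(\bar\varphi)$ lay inside $W_k$, their sum would also lie inside $W_k$, contradicting $W_k \subsetneq V$.
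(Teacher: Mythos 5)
Your argument is correct and is essentially the paper's own proof: the subspaces $\mathrm{Im}(\bar\varphi)$ are exactly the paper's $V_\varphi=(\varphi(\omega_R)+\mm\tr(\omega_R))/\mm\tr(\omega_R)$, your greedy dimension-increasing selection is the same as the paper's inductive lemma that a sum of subspaces covering an $n$-dimensional space can be achieved by $n$ of them, and both conclude with Nakayama's Lemma. No gaps.
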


\begin{proof} Let $\A=\Hom_R(\omega_R,R)$. Let $\varphi\in \A$.  We denote by  $V_\varphi$ the $K$-subspace $(\varphi(\omega_R)+\mm\tr(\omega_R))/\mm\tr(\omega_R)$ of the $K$-vector space $\tr(\omega_R)/\mm\tr(\omega_R)$.

We first  prove   by induction on $n$ the  following  elementary fact:   if $V$ is any $K$-vector space of dimension $n$ and $V=\sum_{\lambda\in\Lambda}V_\lambda$, then there exist $\lambda_1,\ldots,\lambda_n\in \Lambda$ such that $V=\sum_{i=1}^nV_{\lambda_i}$. The assertion is trivial for $n=1$. Now let us assume that $n>1$. There exists $\lambda_1\in \Lambda$ such that $V_{\lambda_1}\neq 0$. Let $W=V/V_{\varphi_1}$. Then $W$ is a $K$-vector space of dimension $<n$, and $\sum_{\lambda\in \Lambda, \lambda\neq \lambda_1}(V_\lambda+V_{\lambda_1})/V_{\lambda_1}=W$. By the induction hypothesis, there exist $\lambda_2,\ldots,\lambda_n$ such that $\sum_{i=2}^n(V_{\lambda_i}+V_{\lambda_1})/V_{\lambda_1}=W$. It follows that
$V=\sum_{i=1}^nV_{\lambda_i}$.

Applying this result to our particular case, we see that there exist $\varphi_1,\ldots,\varphi_n$ such that $(\sum_{i=1}^n\varphi_i(\omega_R))+\mm\tr(\omega_R)=\tr(\omega_R)$. Nakayama's Lemma implies that $\sum_{i=1}^n\varphi_i(\omega_R)=\tr(\omega_R)$.
\end{proof}

\section{$\tau$-ideals and their companions for $0$-dimensional local $K$-algebras}
\label{abstract}

Let $K$ be a field, and let $R$ be a $0$-dimensional local $K$-algebra. Then $R$ admits a canonical module $\omega_R$. In fact, $\omega_R=R^\vee$, where $R^\vee=\Hom_K(R,K)$. Let $M$ be a finitely generated $R$-module. Then we set $M^\vee=\Hom_K(M,K)$. One has
\[
M^\vee =\Hom_R(M,\omega_R) \quad \text{and} \quad (M^\vee)^\vee =M.
\]
\begin{Remark}\label{Juergen}
It is well known, see for example \cite[Satz 6.10]{HK}, that the minimal number of generators $\mu(M)$ of $M$ is equal to the socle dimension $\sigma(M^{\vee})$ of $M^{\vee}$.
\end{Remark}

Let $I\subset R$ be an ideal. A {\em companion} of $I$ is an ideal $J\subset R$ such that $J\iso I^\vee$. An ideal $I$ is called a {\em $\tau$-ideal}, if there exists an $R$-module homomorphism $\varphi\:\; \omega_R\to R$ with $\varphi(\omega_R)=I$.

\begin{Lemma}
\label{taucompanion}
Let $I\subset R$ be an ideal. Then $I$ is a $\tau$-ideal if and only if $I$ has a companion $J$. In that case $J$ is also a $\tau$-ideal.
\end{Lemma}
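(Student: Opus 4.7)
The plan is to exploit Matlis duality: since $R$ is a $0$-dimensional local $K$-algebra, the functor $(-)^\vee = \Hom_K(-,K)$ is an exact contravariant involution on finitely generated $R$-modules, and it exchanges $R$ and $\omega_R$, namely $R^\vee = \omega_R$ and $\omega_R^\vee = R$. Everything in the lemma will fall out of this formalism.

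For the forward direction, assume $I$ is a $\tau$-ideal, so that there is an $R$-module homomorphism $\varphi \colon \omega_R \to R$ with $\varphi(\omega_R) = I$. Dualizing the short exact sequence
\[
0 \longrightarrow \ker\varphi \longrightarrow \omega_R \longrightarrow I \longrightarrow 0
\]
produces the short exact sequence
\[
0 \longrightarrow I^\vee \longrightarrow \omega_R^\vee \longrightarrow (\ker\varphi)^\vee \longrightarrow 0,
\]
and under the identification $\omega_R^\vee = R$ the submodule $I^\vee$ is realized as an ideal $J \subset R$ (concretely, the kernel of the surjection $R \to (\ker\varphi)^\vee$). By construction $J \cong I^\vee$, so $J$ is a companion of $I$.

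For the converse, suppose $J \subset R$ is an ideal with $J \cong I^\vee$. Dualizing the inclusion $J \hookrightarrow R$ yields a surjection $\omega_R = R^\vee \twoheadrightarrow J^\vee$, and since $J^\vee \cong (I^\vee)^\vee = I$, composition gives a surjection $\psi \colon \omega_R \twoheadrightarrow I$. Composing $\psi$ with the inclusion $I \hookrightarrow R$ produces an $R$-module homomorphism $\omega_R \to R$ whose image is the ideal $I$, so $I$ is a $\tau$-ideal.

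Finally, the assertion that the companion $J$ is itself a $\tau$-ideal is pure symmetry: the relation $J \cong I^\vee$ is equivalent to $I \cong J^\vee$, so $I$ is a companion of $J$, and the reverse implication applied to $J$ in place of $I$ shows that $J$ is a $\tau$-ideal. The argument is routine once Matlis duality is in place; the only subtle point worth flagging is that a companion is a \emph{choice} of realization of $I^\vee$ as an actual ideal of $R$, rather than a canonical object, and it is precisely this flexibility that lets the same duality step be read in both directions.
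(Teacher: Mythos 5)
Your proof is correct and follows essentially the same route as the paper's: both arguments dualize the epimorphism $\omega_R\to I$ to realize $I^\vee$ as an ideal of $R$, and dualize the inclusion of an ideal into $R$ to produce the required epimorphism from $\omega_R$, with the last assertion following from the symmetry of the companion relation.
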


\begin{proof}
Suppose $I$ is a $\tau$-ideal. Then there exists an epimorphism $\varphi:\omega_R\to I$. This epimorphism induces a monomorphism $\varphi^\vee\: I^\vee \to R$. Let $J$ be the image of $I^\vee \subseteq R$. Then $J$ is an ideal in $R$ and $J$ is isomorphic to $I^\vee$. Hence $J$ is a companion of $I$.

Let $J$ be a companion of $I$. Then there exists an isomorphism $\alpha: I^\vee\to J$. The inclusion map $I\to R$ induces an epimorphism $\psi\: \omega_R\to I^\vee$. Then $\varphi=\alpha \circ \psi: \omega_R \to J$ is an epimorphism. Hence $J$ is a $\tau$-ideal.

Moreover, if $I$ has a companion $J'$, then  $J'\iso I^\vee$, and so  $I\iso (J')^\vee$. Therefore, $I$ is a companion of $J'$. Hence $I$ is a $\tau$-ideal.
\end{proof}

An ideal $I\subset R$ is called {\em symmetric}, if there exists an isomorphism $\gamma\:\; I\to I^\vee$. In other words, $I$ is symmetric if $I$ is a companion of $I$.

\begin{Corollary}
\label{tau}
Let $I\subseteq R$ be a symmetric ideal. Then $I$ is a $\tau$-ideal.
\end{Corollary}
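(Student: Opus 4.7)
The plan is to deduce this directly from \Cref{taucompanion}. By the very definition of symmetry, the isomorphism $\gamma\colon I\to I^\vee$ exhibits $I$ itself as a companion of $I$: indeed, taking $J=I$ in the definition of companion, the requirement $J\iso I^\vee$ is exactly the content of symmetry. Thus $I$ is an ideal of $R$ which possesses a companion, namely itself.

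Having established that $I$ admits a companion, I would then simply invoke \Cref{taucompanion}, whose statement asserts precisely that an ideal is a $\tau$-ideal if and only if it has a companion. This immediately yields that $I$ is a $\tau$-ideal, completing the proof. There is no real obstacle here; the statement is essentially a tautological unpacking of the definitions of \emph{symmetric} and \emph{companion}, combined with the already-proved equivalence in \Cref{taucompanion}. The only thing worth emphasizing is that the map $\varphi\colon\omega_R\to I$ realizing $I$ as a $\tau$-ideal can be traced through the construction in the proof of \Cref{taucompanion}: it is obtained as the composition $\omega_R\twoheadrightarrow I^\vee\xrightarrow{\gamma^{-1}} I$, where the first arrow is the epimorphism dual to the inclusion $I\hookrightarrow R$.
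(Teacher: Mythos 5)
Your proposal is correct and is exactly the paper's (implicit) argument: the paper defines symmetry as $I$ being a companion of itself and then states the corollary as an immediate consequence of \Cref{taucompanion}, with the realizing epimorphism $\omega_R\twoheadrightarrow I^\vee\xrightarrow{\ \gamma^{-1}\ } I$ coming from the second paragraph of that lemma's proof, just as you describe.
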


Not all $\tau$-ideals are symmetric, see Example~\ref{EXAMPLE_Z}.

\begin{Proposition}
\label{natural}
Let $G$ be a $0$-dimensional local Gorenstein $K$-algebra, and let $J\subset G$ be a non-zero ideal and $R=G/J$. Then $I=((0:J)+J)/J\subset R$ is a $\tau$-ideal. Moreover,  $I$ is symmetric if $J\subset 0:J$.
\end{Proposition}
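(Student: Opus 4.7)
The plan is to exploit the fact that $\omega_G = G$ (because $G$ is a $0$-dimensional Gorenstein $K$-algebra), which gives the identification
\[
\omega_R \;\cong\; \Hom_G(R, \omega_G) \;=\; \Hom_G(G/J, G) \;=\; (0:_G J).
\]
So $\omega_R$ sits naturally inside $G$ as the ideal $(0:J)$.

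For the $\tau$-ideal assertion I would define $\varphi\colon \omega_R \to R$ as the composite $(0:J) \hookrightarrow G \twoheadrightarrow G/J = R$. Both $\omega_R=(0:J)$ and $R$ are annihilated by $J$, so this $G$-linear composite is automatically $R$-linear. By construction its image is $((0:J)+J)/J = I$, exhibiting $I$ as a $\tau$-ideal via $\varphi$. Note that $\ker\varphi = (0:J)\cap J$.

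For the symmetry assertion, assume $J \subset 0:J$, equivalently $J^2 = 0$ in $G$. Then $J \subset (0:J) = \omega_R$, so $J$ becomes an $R$-submodule of $\omega_R$ and $\ker\varphi = (0:J)\cap J = J$. This yields a short exact sequence of $R$-modules
\[
0 \to J \to \omega_R \to I \to 0.
\]
Applying $(-)^\vee = \Hom_R(-,\omega_R)$ and using the standard identification $\End_R(\omega_R) \cong R$ (where $r\in R$ corresponds to the multiplication map $\mu_r$), I obtain
\[
0 \to I^\vee \to R \to \Hom_R(J,\omega_R),
\]
in which the right-hand arrow sends $r = g+J$ to $\mu_r|_J\colon j \mapsto gj$ (the product being computed in $G$, since $J\subset \omega_R\subset G$). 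The kernel of this restriction map consists of those cosets $g+J$ with $gJ = 0$ in $G$, i.e.\ $g\in(0:J)$, which is exactly the ideal $I=((0:J)+J)/J$. Consequently $I^\vee \cong I$, so $I$ is symmetric.

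The main thing to be careful with is keeping the various module structures straight: $(0:J)$ is an ideal of $G$, but through the identification with $\omega_R$ it becomes an $R$-module; the hypothesis $J \subset 0:J$ is precisely what promotes $J$ to an $R$-submodule of $\omega_R$, so that the short exact sequence above is defined and the computation of $\ker(R \to \Hom_R(J,\omega_R))$ translates into ideal-theoretic data inside $G$. The remaining ingredients — namely $\omega_G = G$ and $\End_R(\omega_R)=R$ — are standard facts about canonical modules.
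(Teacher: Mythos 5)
Your proof is correct. The first half --- realizing $\omega_R\iso\Hom_G(G/J,G)\iso (0:J)$ inside $G$ and composing the inclusion $(0:J)\hookrightarrow G$ with $G\twoheadrightarrow R$ --- is exactly the paper's argument. For the symmetry statement your argument and the paper's are Matlis-dual to one another but not identical. The paper starts from the exact sequence $0\to (0:J)/J\to G/J\to G/(0:J)\to 0$ and applies the exact functor $\Hom_G(-,G)$, using the Gorenstein identities $\Hom_G(G/J,G)\iso 0:J$ and $\Hom_G(G/(0:J),G)\iso 0:(0:J)=J$ to recognize the dualized sequence as $0\to J\to (0:J)\to I^\vee\to 0$, whence $I^\vee\iso (0:J)/J=I$. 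You instead start from the kernel--image sequence of $\varphi$ itself, $0\to J\to\omega_R\to I\to 0$, apply $\Hom_R(-,\omega_R)$, and identify $I^\vee$ with the kernel of the restriction map $R\iso\End_R(\omega_R)\to\Hom_R(J,\omega_R)$, computed directly as $((0:J)+J)/J=I$. What your route buys is that it needs neither the double-annihilator identity $0:(0:J)=J$ nor the exactness of $\Hom_G(-,G)$ (left-exactness of $\Hom_R(-,\omega_R)$ suffices); what it costs is the bookkeeping you correctly flag, namely that $J\subset(0:J)$ is an $R$-submodule of $\omega_R$ and that under $\End_R(\omega_R)\iso R$ the restriction map is $g+J\mapsto(j\mapsto gj)$. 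Both arguments are complete.
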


\begin{proof}
Note that $\omega_R=\Hom_G(R,G)\iso 0:J$. The composition of the inclusion map $0:J\subset G$ with the canonical epimorphism $G\to R$ yields an $R$-module homomorphism $\omega_R\to R$ whose image is $I=((0:J)+J)/J$. Thus, $I$ is a $\tau$-ideal.

Assume now that $J\subset 0:J$. Then $I=(0:J)/J$, and we have the exact sequence
\[
0\to (0:J)/J\to  G/J\to G/(0:J)\to 0.
\]
Since $G$ is Gorenstein, the functor $\Hom_G(-, G)$ is exact.  Applying this functor to the above exact sequence, we obtain the exact sequence
\[
0\to\Hom_G(G/(0:J), G)  \to  \Hom_G(G/J, G)\to\Hom_G((0:J)/J, G)\to 0.
\]

Observe that  $\Hom_G(G/(0:J), G)\iso 0:(0:J)=J$, since $G$ is Gorenstein. Furthermore, $\Hom_G(G/J, G)\iso 0:J$ and $\Hom_G((0:J)/J, G)=
((0:J)/J)^\vee$. This shows that $(0:J)/J\iso ((0:J)/J)^\vee$, as desired.
\end{proof}

Recall that $R$ is of Teter type, if  there exists an epimorphism  $\varphi\: \omega_R\to \tr(\omega_R)$.

\begin{Proposition}
\label{easy}
The following conditions are equivalent:
\begin{enumerate}
\item[(a)] $R$ is of Teter type.
\item[(b)] $\tr(\omega_R)$ is symmetric.
\end{enumerate}
\end{Proposition}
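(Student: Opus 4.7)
The plan is to prove the two implications separately, using Lemma \ref{taucompanion} and Corollary \ref{tau} as the main tools, together with a length (i.e. $K$-dimension) count in the harder direction.

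For (b)$\Rightarrow$(a), I would simply invoke Corollary \ref{tau}: if $\tr(\omega_R)$ is symmetric then it is a $\tau$-ideal, so by definition there exists an epimorphism $\varphi\colon \omega_R\to \tr(\omega_R)$, which is precisely the assertion that $R$ is of Teter type.

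For (a)$\Rightarrow$(b), suppose $\varphi\colon \omega_R\to \tr(\omega_R)$ is an epimorphism. Dualizing with respect to $(-)^\vee=\Hom_K(-,K)$ yields a monomorphism $\varphi^\vee\colon \tr(\omega_R)^\vee\hookrightarrow \omega_R^\vee=R$. Let $J\subset R$ be its image; by construction $J\iso \tr(\omega_R)^\vee$, so $J$ is a companion of $\tr(\omega_R)$. The key point is to show $J=\tr(\omega_R)$. On the one hand, $J$ is itself a $\tau$-ideal by Lemma \ref{taucompanion}, so $J$ is the image of some $R$-module homomorphism $\omega_R\to R$; hence $J\subseteq \tr(\omega_R)$ by the very definition of the trace. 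On the other hand, since the $K$-dual preserves $K$-dimension, we have
\[
\length_K(J)=\length_K(\tr(\omega_R)^\vee)=\length_K(\tr(\omega_R)).
\]
Combined with the inclusion, this forces $J=\tr(\omega_R)$. Consequently $\tr(\omega_R)=J\iso \tr(\omega_R)^\vee$, so $\tr(\omega_R)$ is symmetric.

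The only step that requires genuine thought is the equality $J=\tr(\omega_R)$; everything else is formal from the duality and the definitions. The length comparison, which rests on the fact that $R$ is a $0$-dimensional $K$-algebra so that $(-)^\vee=\Hom_K(-,K)$ is an exact length-preserving duality, is what makes the argument work.
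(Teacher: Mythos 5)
Your proof is correct and follows essentially the same route as the paper: the easy direction via Corollary \ref{tau}, and the hard direction by producing a companion $J$ of $\tr(\omega_R)$, noting $J\subseteq\tr(\omega_R)$ because $J$ is a $\tau$-ideal, and concluding $J=\tr(\omega_R)$ by comparing $K$-dimensions. The only cosmetic difference is that you construct the companion explicitly by dualizing the epimorphism, whereas the paper delegates that construction to Lemma \ref{taucompanion}.
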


\begin{proof}
(a)\implies (b): If $R$ is of Teter type, then $I=\tr(\omega_R)$ is a $\tau$-ideal, and so $I$  has a companion $J$. By \Cref{taucompanion},  $J$ is also a $\tau$-ideal, which implies  that $J\subseteq \tr(\omega_R)=I$. Since $J\iso I^\vee$, it follows that $\dim_KJ=\dim_KI$. Thus we conclude that $J=I$. This means that $I$ is symmetric.

(b)\implies (a):  $\tr(\omega_R)$ is symmetric, then $\tr(\omega_R)$ is a $\tau$-ideal, see \Cref{tau}. This means that $R$ is of Teter type.
\end{proof}

We assume for a moment  that $R$ is a local ring and that  $M$ is a finitely generated  $R$-module. As observed by Vasconcelos \cite{V}, the trace of a module $M$ can be computed as follows: we choose  a  free presentation
\[
G\stackrel{A}{\to}F\to M\to 0
\]
with finitely generated free $R$-modules $F$ and $G$, where $A$ is the matrix describing the $R$-module homomorphism $G\to F$ with respect to the basis   $f_1,\ldots, f_n$ of $F$ and the basis  $g_1,\ldots, g_m$  of $G$. Note that $A$ is an $n\times m$ matrix.  Let $\bb=(b_1,\ldots,b_n)$ be a vector with entries in $R$. Then $\bb$ defines an $R$-module homomorphism $\varphi\: M\to R$ if and only if $\bb A=0$. Thus $\tr(M)$ is generated by the entries of all the vectors $\bb$  with $\bb A=0$, equivalently, by all column vectors $\bb^{\sf T}$ such that $A^{\sf T}\bb^{\sf T}=0$. The corresponding statements hold for graded rings and graded modules.

Assume now that $R=S/I$ is a $0$-dimensional standard graded $K$-algebra with $S=K[x_1,\ldots,x_n]$ the polynomial ring.  Let
\[
0\to F_n \to F_{n-1} \to \ldots \to F_1\to F_0\to R\to 0.
\]
We do not insist that this resolution is minimal, but only require that the length of this resolution coincides with the projective dimension of $R$ (which is equal to $n$).  We denote by $N^*$ the $S$-dual of an $S$-module $N$. It is well-known (see \cite{BH}) that the $S$-dual of  $F_n \to F_{n-1}$ provides a graded free  presentation  of the $S$-module $\omega_R$. That is, we have an exact sequence
$
F_{n-1}^*\to F_{n}^*\to \omega_R.
$
Tensorizing this sequence of $S$-modules with $R$, we obtain a free $R$-module presentation  of $\omega_R$,  and we may apply the above mentioned observation of Vasconcelos.
Let $e_1,\ldots,e_m$ be a basis of $F_n$, which is also a basis of $F_n\tensor_SR$. Then for $F_n^*\tensor_SR$ we have the dual basis $e_1^*,\ldots,e_m^*$ and an epimorphism $\epsilon: F_n^*\tensor_SR\to \omega_R$. Let  $\omega_i=\epsilon(e_i^*)$. Then an element $\cb=\sum_{i=1}^m c_ie_i$ belongs to $C=\Ker(F_n\tensor_SR\to F_{n-1}\tensor_SR)$ if and only if the assignment $\varphi(\omega_i)=c_i$ for $1\leq i\leq m$ defines an $R$-module homomorphism $\varphi: \omega_R\to R$.

Let $\cb_1,\ldots,\cb_r$ be a generating set of $C$ with $\cb_j=(c_{1j},\ldots,c_{mj})^{\sf T}$. Consider the $m\times r$ matrix $E=(c_{ij})$ which we denote again by $C$. The discussion above shows that each of the column vectors $\cb_j$ of $E$ defines an  $R$-module homomorphism $\varphi_j:\omega_R\to R$ and any other $R$-module homomorphism
$\varphi:\omega_R\to R$ is an $R$-linear combination of $\varphi_1,\ldots,\varphi_r$. Hence we conclude

\begin{Proposition}
\label{vasconcelos}
Let $E$ be a matrix representing the kernel of $F_n\tensor_SR\to F_{n-1}\tensor_SR$. Then the entries of $E$ generate $\tr(\omega_R)$.
\end{Proposition}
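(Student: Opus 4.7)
The plan is to organize the discussion preceding the statement into a clean argument in three short steps. The strategy is a direct application of Vasconcelos's reformulation recalled in the excerpt, combined with the identification of $\Hom_R(\omega_R,R)$ with the kernel $C$.

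First, I would make explicit the tail
\[
F_{n-1}^*\tensor_S R\To F_n^*\tensor_S R\stackrel{\epsilon}{\To}\omega_R\To 0
\]
as a free $R$-presentation of $\omega_R$, with $\omega_i=\epsilon(e_i^*)$ ($i=1,\dots,m$) a generating set. By Vasconcelos's observation applied to this presentation, specifying $\varphi\in\Hom_R(\omega_R,R)$ amounts to specifying a column vector $\cb=(c_1,\dots,c_m)^{\sf T}$ with entries in $R$ annihilated by the transpose of the presentation matrix. Since the presentation matrix is obtained by dualizing $F_n\to F_{n-1}$, its transpose (after tensoring with $R$) is precisely the original differential $F_n\tensor_S R\to F_{n-1}\tensor_S R$. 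The condition on $\cb$ therefore becomes $\cb\in C$, and the resulting bijection $\cb\leftrightarrow\varphi_{\cb}$ with $\varphi_{\cb}(\omega_i)=c_i$ is manifestly $R$-linear in $\cb$.

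Next, since the columns $\cb_1,\dots,\cb_r$ of $E$ generate $C$ as an $R$-module, every $\varphi\in\Hom_R(\omega_R,R)$ is an $R$-linear combination $\sum_j r_j\varphi_{\cb_j}$, hence
\[
\tr(\omega_R)=\sum_{\varphi\in\Hom_R(\omega_R,R)}\varphi(\omega_R)=\sum_{j=1}^r\varphi_{\cb_j}(\omega_R).
\]
Since $\omega_1,\dots,\omega_m$ generate $\omega_R$, the ideal $\varphi_{\cb_j}(\omega_R)$ is generated by the values $\varphi_{\cb_j}(\omega_i)=c_{ij}$, i.e., by the entries of the $j$-th column of $E$. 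Summing over $j$ yields that $\tr(\omega_R)$ is generated by all entries of $E$, which is the claim.

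The only point requiring any real care is verifying that the sequence displayed above really is exact as a sequence of $R$-modules, i.e., that tensoring the $S$-dual of $F_n\to F_{n-1}$ with $R$ yields an $R$-presentation of $\omega_R$. This is standard: $\omega_R\iso\Ext_S^n(R,S)$ is computed as the cokernel of the $S$-dual $F_{n-1}^*\to F_n^*$, and right-exactness of $-\tensor_S R$ preserves that cokernel. Once this identification is in place, everything else is a purely formal consequence of Vasconcelos's reformulation, so the proof is short and essentially bookkeeping.
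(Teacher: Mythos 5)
Your proposal is correct and follows essentially the same route as the paper, whose ``proof'' is exactly the discussion preceding the statement: dualize the tail $F_n\to F_{n-1}$ of the resolution, tensor with $R$ to get a presentation of $\omega_R$, and apply Vasconcelos's observation to identify $\Hom_R(\omega_R,R)$ with the kernel $C$, so that the columns of $E$ give generators of the trace. Your added remark on why $F_{n-1}^*\tensor_SR\to F_n^*\tensor_SR\to\omega_R\to 0$ is exact is the same justification the paper delegates to the reference, so nothing is missing.
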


Assume now that $R=S/I$ is a $0$-dimensional standard (multi)graded $K$-algebra. Then $R$ admits a (multi)graded free $S$-resolution $\FF$ of length $n$. Moreover, $\omega_R$ admits a (multi)graded free $S$-resolution $\FF^*$. Let $e_1,\ldots,e_r$ be a homogeneous basis of $F_n$. A column vector $\cb=(c_1,\ldots,c_r)^{\sf T}\in C$ is homogenous if $\deg(c_i)+\deg(e_i)$ does not depend on $i$ for all $i$ with $c_i\neq 0$. This common value is called the degree of $\cb$.  The homogeneous column vectors generate $C$.
The map $\varphi:\omega_R\to R$ corresponding to a given homogeneous column vector $\cb$ has (multi)degree $\deg(\cb)$.
Resuming what we discussed so far we have

 \begin{Corollary}
 \label{resume}
 Let $R$ be a $0$-dimensional standard (multi)graded $K$-algebra. Then $\tr(\omega_R)=\sum_{\varphi\in  \Sc}\varphi(\omega_R)$, where $\Sc$ is the set of graded, resp.\  multigraded $R$-module homomorphisms $\varphi\: \omega_R\to R$ (not necessarily of degree zero).
\end{Corollary}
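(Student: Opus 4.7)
The inclusion $\sum_{\varphi\in\Sc}\varphi(\omega_R)\subseteq\tr(\omega_R)$ is trivial from the definition of $\tr(\omega_R)$, so the content of the statement is the reverse inclusion. The plan is to apply Vasconcelos's description of the trace from \Cref{vasconcelos} in the (multi)graded category and observe that the kernel computing generators of $\tr(\omega_R)$ is itself (multi)graded.

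First, I would choose a (multi)graded free $S$-resolution
\[
0\to F_n\to F_{n-1}\to\cdots\to F_1\to F_0\to R\to 0,
\]
which exists since $I\subset S$ is a (multi)graded ideal. Dualizing the tail $F_n\to F_{n-1}$ over $S$ yields a (multi)graded presentation $F_{n-1}^*\to F_n^*\to\omega_R\to 0$, and tensoring with $R$ produces a (multi)graded free $R$-presentation $F_{n-1}^*\tensor_SR\to F_n^*\tensor_SR\to\omega_R\to 0$ after fixing a (multi)homogeneous dual basis $e_1^*,\ldots,e_m^*$ of $F_n^*$ and pushing it to a (multi)homogeneous generating set $\omega_1,\ldots,\omega_m$ of $\omega_R$.

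Next, consider $C=\Ker(F_n\tensor_SR\to F_{n-1}\tensor_SR)$. Since this is the kernel of a (multi)graded map between (multi)graded free $R$-modules, $C$ is a (multi)graded $R$-submodule of $F_n\tensor_SR$. Hence $C$ admits a (multi)homogeneous generating set $\cb_1,\ldots,\cb_r$, where each $\cb_j=(c_{1j},\ldots,c_{mj})^{\sf T}$ satisfies the homogeneity condition that $\deg(c_{ij})+\deg(e_i)$ is independent of $i$ whenever $c_{ij}\neq 0$. By the correspondence described before \Cref{vasconcelos}, each $\cb_j$ defines an $R$-module homomorphism $\varphi_j\:\omega_R\to R$ sending $\omega_i\mapsto c_{ij}$, and the homogeneity of $\cb_j$ translates precisely into the condition that $\varphi_j$ is (multi)graded of (multi)degree $\deg(\cb_j)$. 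Hence each $\varphi_j$ lies in $\Sc$.

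Finally, \Cref{vasconcelos} asserts that the entries of a matrix representation of $C$ generate $\tr(\omega_R)$; equivalently, $\tr(\omega_R)=\sum_{j=1}^r\varphi_j(\omega_R)$, where $\varphi_j$ ranges over any generating set of $\Hom_R(\omega_R,R)$. Applying this to the generating set coming from our (multi)homogeneous $\cb_1,\ldots,\cb_r$ gives $\tr(\omega_R)=\sum_{j=1}^r\varphi_j(\omega_R)\subseteq\sum_{\varphi\in\Sc}\varphi(\omega_R)$, as desired. The only subtle point — and what I would flag as the main thing to verify carefully — is that the relevant correspondence between column vectors in $C$ and homomorphisms in $\Hom_R(\omega_R,R)$ respects the (multi)grading, which comes down to compatibility of the chosen homogeneous bases on $F_n$ and $F_n^*$.
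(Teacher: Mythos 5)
Your argument is correct and is essentially the paper's own: the paper likewise derives \Cref{resume} by noting that $C=\Ker(F_n\tensor_SR\to F_{n-1}\tensor_SR)$ is a (multi)graded submodule generated by homogeneous column vectors, that each such vector corresponds to a (multi)graded homomorphism $\varphi\:\omega_R\to R$ of degree $\deg(\cb)$, and then invoking \Cref{vasconcelos}. The compatibility of gradings you flag is exactly the point the paper settles in the paragraph preceding the corollary, so there is no gap.
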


Suppose again  that $R$ is graded. Then  we say that $I\subset R$ is a {\em $\tau$-ideal} if there exists a graded $R$-module homomorphism $\varphi\: \omega_R\to R$ with $I=\varphi(\omega_R)$. Similarly, if $M$ is a finitely generated graded $R$-module, then $M^\vee$ is  defined to be the module of graded $R$-module homomorphisms $\Hom_K(M,K)$. In particular, there is a graded version for all the concepts and the results introduced before. Similar statements apply when $R$ is a monomial $K$-algebra.

\medskip
Note that any $0$-dimensional monomial $K$-algebra is graded, and any graded $K$-algebra is local. Thus we have:
 \[
 \text{Teter type in the $\ZZ^n$-graded sense}\Rightarrow \text{Teter type in the graded sense}\Rightarrow \text{Teter type}.
 \]
 These implications are strict, as is shown in \Cref{Tschaikovsky}.

\medskip
 With the assumptions and notation of \Cref{resume} we have

 \begin{Corollary}
 \label{whether or not}
 The $K$-algebra $R$ is of Teter type, if and only if there exists a homogeneous column vector $\cb\in C$  such  that the entries of $\cb$ and the entries of $C$ generate the same ideal.
\end{Corollary}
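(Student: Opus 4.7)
The plan is to derive this corollary directly from Proposition \ref{vasconcelos} together with the correspondence between column vectors in $C$ and $R$-module homomorphisms $\omega_R \to R$ that was spelled out just before that proposition. The corollary is essentially a translation of ``Teter type'' into the matrix language, so I expect only bookkeeping rather than a genuine obstacle.

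First, I would use Corollary \ref{resume} to reduce the Teter-type condition to the (multi)graded setting: $R$ is of Teter type (in the (multi)graded sense) if and only if there exists a single (multi)homogeneous homomorphism $\varphi\:\omega_R \to R$ with $\varphi(\omega_R) = \tr(\omega_R)$. This replaces the potentially complicated problem of matching $\tr(\omega_R)$ by one map with the concrete problem of finding one homogeneous surjection.

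Next, I would invoke the dictionary recalled before Proposition \ref{vasconcelos}. Using the generators $\omega_1, \ldots, \omega_m$ of $\omega_R$ obtained from the dual basis $e_1^*, \ldots, e_m^*$ of $F_n^*\otimes_S R$, every column vector $\cb = (c_1, \ldots, c_m)^{\sf T} \in C$ determines a homomorphism $\varphi_\cb\:\omega_R \to R$ by the rule $\varphi_\cb(\omega_i) = c_i$, and every homomorphism $\omega_R \to R$ arises in this way. In particular, $\varphi_\cb(\omega_R)$ equals the ideal of $R$ generated by the entries $c_1, \ldots, c_m$ of $\cb$. Since $\FF^*\otimes_S R$ is a (multi)graded free presentation of $\omega_R$, this correspondence respects the grading: homogeneous column vectors in $C$ are in bijection with homogeneous homomorphisms $\omega_R \to R$.

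Finally, Proposition \ref{vasconcelos} tells us that the entries of any matrix whose columns generate $C$ generate $\tr(\omega_R)$ as an ideal of $R$. Combining the three items, the existence of a homogeneous $\varphi\:\omega_R \to R$ with $\varphi(\omega_R) = \tr(\omega_R)$ is exactly the existence of a homogeneous $\cb \in C$ whose entries generate the same ideal as the totality of the entries of $C$, which is the content of the corollary. The only non-routine point to verify is that the column-vector/homomorphism dictionary is compatible with the (multi)grading, and this follows from the construction of $\FF^*$ (and hence $\FF^*\otimes_S R$) as the $S$-dual of a (multi)graded complex, so no real obstacle is anticipated.
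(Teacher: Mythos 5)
Your proposal is correct and follows essentially the same route the paper intends: the corollary is stated there without a separate proof precisely because it is the combination of Corollary \ref{resume}, the column-vector/homomorphism dictionary set up before Proposition \ref{vasconcelos}, and Proposition \ref{vasconcelos} itself, which is exactly what you assemble. The one point you flag as needing care — compatibility of the dictionary with the (multi)grading — is indeed the only thing to check, and your justification via the construction of $\FF^*$ as the dual of a (multi)graded complex is the same as the paper's.
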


A stronger statement as the one given in \Cref{whether or not} can be made when $R$ is a $0$-dimensional monomial $K$-algebra.

 \begin{Corollary}
 \label{stronger}
 Assume that $K$ is an infinite field, and let $R$ be $0$-dimensional monomial $K$-algebra.  Let $\cb_1,\ldots,\cb_m$ be a system of generators of $C$ and let $E$ be the matrix whose column vectors are $\cb_1,\ldots\cb_m$.  For each $\ab\in\ZZ^n$, let  $E_\ab$ be the submatrix of $E$   consisting of the column vectors $\cb_i$ with $\deg(\cb_i)=\ab$. Then $R$ is of Teter type if and only  for some $\ab\in \ZZ^n$,  the entries of $E_\ab$  and the entries of $E$ generate the same ideal.
 \end{Corollary}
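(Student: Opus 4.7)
My plan is to use \Cref{whether or not}, which already tells us that $R$ is of Teter type if and only if there exists a multigraded column vector $\cb\in C$ of some multidegree $\ab\in\ZZ^n$ whose entries generate $\tr(\omega_R)$. So it suffices to show that such a $\cb$ of multidegree $\ab$ exists precisely when the entries of $E_\ab$ (for the same $\ab$) already generate $\tr(\omega_R)$. The two ingredients are a generic-combination argument, where the hypothesis that $K$ is infinite enters, and a Nakayama reduction, which exploits the monomial structure of $R$.

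For $(\Leftarrow)$: Fix $\ab$ for which the entries of $E_\ab$ generate $\tr(\omega_R)$. Since every $\cb_i\in E_\ab$ has multidegree $\ab$, its $k$th entry has the form $\gamma_{ki}m_k$ with $\gamma_{ki}\in K$, where $m_k$ is the unique monomial of multidegree $\ab-\deg(e_k)$ in $R$ (taken to be $0$ if no such monomial exists). Set $T=\{k : \gamma_{ki}\neq 0 \text{ for some } \cb_i\in E_\ab\}$, so that by hypothesis $(m_k:k\in T)=\tr(\omega_R)$. For scalars $\lambda_i\in K$ (one for each column of $E_\ab$) put $\cb=\sum_i\lambda_i\cb_i$; its $k$th entry equals $L_k(\lambda)\,m_k$, where $L_k(\lambda)=\sum_i\lambda_i\gamma_{ki}$ is a linear form in the $\lambda_i$, nonzero precisely when $k\in T$. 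Because $K$ is infinite, we may choose $\lambda$ outside the finitely many hyperplanes $\{L_k=0\}_{k\in T}$; then $\cb\in C_\ab$ is multigraded with entry-support exactly $T$, so its entries generate $\tr(\omega_R)$, and \Cref{whether or not} gives that $R$ is of Teter type.

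For $(\Rightarrow)$: Let $\cb\in C_\ab$ be multigraded with entries generating $\tr(\omega_R)$, provided by \Cref{whether or not}. Write $\cb=\sum_i r_i\cb_i$ with $r_i\in R$ multigraded of degree $\ab-\deg(\cb_i)$. The key monomial-algebra observation is that each graded piece of $R$ is at most one-dimensional, so $r_i$ is either zero or a scalar times the unique monomial of its multidegree; this monomial lies in $\mm$ precisely when $\deg(\cb_i)\neq\ab$. Because every entry $c_{ki}$ of $E$ already belongs to $\tr(\omega_R)$, the contribution $r_ic_{ki}$ to the $k$th entry of $\cb$ lies in $\mm\tr(\omega_R)$ whenever $\cb_i\notin E_\ab$. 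Hence modulo $\mm\tr(\omega_R)$, each entry of $\cb$ is a $K$-linear combination of entries of $E_\ab$. Since the entries of $\cb$ generate $\tr(\omega_R)$, Nakayama's lemma shows that their residues span $\tr(\omega_R)/\mm\tr(\omega_R)$; by the previous sentence the residues of the entries of $E_\ab$ span the same quotient, and a second application of Nakayama's lemma shows that the entries of $E_\ab$ generate $\tr(\omega_R)$, as required. The only delicate point is the monomial-algebra bookkeeping that drives every non-$E_\ab$ contribution into $\mm\tr(\omega_R)$; once this is in place, both halves reduce to standard tools.
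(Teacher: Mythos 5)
Your proof is correct and its main step — forming a generic $K$-linear combination $\cb=\sum_i\lambda_i\cb_{i}$ of the columns of $E_\ab$ and using that $K$ is infinite to avoid the hyperplanes $L_k=0$, then invoking \Cref{whether or not} — is exactly the paper's argument. The converse direction, which the paper leaves implicit, you supply correctly via the observation that the coefficients $r_i$ of columns outside $E_\ab$ lie in $\mm$ together with Nakayama's lemma.
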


 \begin{proof}
 For  given  $\ab\in \ZZ^n$, let $\cb_{i_1},\ldots,\cb_{i_k}$ be the column vectors of $E_\ab$. Since these column vectors are homogeneous, it follows that $\cb_{i_j}=(\lambda_{1j}u_1,\ldots,\lambda_{rj}u_r)^{\sf T}$ for $j=1,\ldots,k$ with $\lambda_{sj}\in K$ for $s=1,\ldots, r$. Since $K$ is infinite,  we may choose $\mu_1,\ldots, \mu_k\in K$ such that $\sum_{j=1}^k\mu_j\lambda_{sj}\neq 0$ for those $s$ for which there exists $j$ with $\lambda_{sj}\neq 0$. Then the  ideal generated by the entries of the column vector $\cb=\sum_{j=1}^k\mu_j\cb_{i_j}$ is the same as the ideal generated by the entries of $E_\ab$, and the desired conclusion follows from \Cref{whether  or not}.
\end{proof}

Let $\FF$ is a minimal (multi)graded free resolution of $R$ of length $n$, as before. Then in the graded case, $\Soc(R)$ is generated by polynomials $f_1,\ldots,f_r$ with $\deg(f_i)=\deg(e_i)-n$ and in the multigraded case $\Soc(R)$ is generated by the monomials $x^{\ab_i}/(x_1\cdots x_n)$, where $\ab_i=\deg(e_i)$.

 Note that \Cref{stronger}, compared with \Cref{whether or not},  has  the advantage that we do not have to search for a suitable column vector to check whether $R$ is of Teter type, but only need to look at the finitely many graded components of $C$. The following examples demonstrate the method.

\begin{Example}
Consider $R=K[x,y]/(x^4,y^4,x^2y^2)$. Computations in Macaulay2 show that the corresponding matrix is
$ E=
\begin{pmatrix}
y^2 & 0 & 0 & x^2\\
0 & x^2 & y^2 & 0
\end{pmatrix}
$.
Let $\FF$ be the multigraded minimal free resolution of $R$. Since $\Soc(R)=(x^3y,xy^3)$, $F_2$ has a  homogeneous  basis $e_1$ and $e_2$ with $\deg(e_1)=(4,2)$ and $\deg(e_2)=(2,4)$.

$$
\begin{pmatrix}
x^4y^2 & x^2y^4\\

\end{pmatrix}
\cdot
\begin{pmatrix}
y^2 & 0 & 0 & x^2\\
0 & x^2 & y^2 & 0

\end{pmatrix}
=
\begin{pmatrix}

x^4y^4 & x^4y^4 & x^2y^6 & x^6y^2\\
\end{pmatrix}
,$$
which means that the maps given by the first and the second column of $E$ both have multidegree $x^4y^4$, the map given by the third column of $M$ has multidegree $x^2y^6$ and the map given by the fourth column of $E$ has multidegree $x^6y^2$. From $E$ we also know that $\tr(\omega_R)=(x^2,y^2)$. Note that the ideal generated by monomials from the first and the second column of $E$ is exactly $(x^2,y^2)$, which proves that $R$ is of Teter type.
\end{Example}

\begin{Example}
Consider $R=K[x,y]/(x^3,y^3,xy)$. Computations in Macaulay2 show that the corresponding matrix is
$ E=
\begin{pmatrix}
y^2 & 0 & 0 & x\\
0 & x^2 & y & 0

\end{pmatrix}
$.
Let $\FF$ be the multigraded minimal free resolution of $R$. Since $\Soc(R)=(x^2,y^2)$, $F_2$ has a  homogeneous  basis $e_1$ and $e_2$ with $\deg(e_1)=(3,1)$ and $\deg(e_2)=(1,3)$.

$$
\begin{pmatrix}
x^3y & xy^3\\

\end{pmatrix}
\cdot
\begin{pmatrix}
y^2 & 0 & 0 & x\\
0 & x^2 & y & 0

\end{pmatrix}
=
\begin{pmatrix}

x^3y^3 & x^3y^3 & xy^4 & x^4y\\

\end{pmatrix}
,$$
which means that the maps given by the first and the second column of $E$ both have multidegree $x^3y^3$, the map given by the third column of $E$ has multidegree $xy^4$ and the map given by the fourth column of $E$ has multidegree $x^4y$. From $E$ we also know that $\tr(\omega_R)=(x,y)$. Note that the ideal generated by monomials from the first and the second column of $E$ is $(x^2,y^2)$; the ideal generated by monomials in the third column of $E$ is $(y)$ and the ideal generated by monomials in the fourth column of $E$ is $(x)$. This proves that $R$ is not of Teter type.
\end{Example}

\bigskip
We apply the theory, as developed so far,  to show

\begin{Theorem}
\label{completedim2}
Let $f, g\in S=K[x,y]$ be a regular sequence of homogeneous polynomials,  and let $G=S/(f,g)$ be the complete intersection ring with graded maximal ideal $\nn$.
Assume further that $K$ is infinite or that $f$ and $g$ are monomials. Then for each integer  $1\leq k \leq a+b-2$,  the $K$-algebra $R=G/\nn^k$ is of Teter type.
\end{Theorem}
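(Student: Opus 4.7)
The plan is to invoke \Cref{easy}: it suffices to show that $\tr(\omega_R)$ is symmetric in $R$. Set $s=a+b-2$, the socle degree of $G$. Poincar\'e duality for the standard graded Artinian Gorenstein ring $G$ gives $0:_G \nn^j = \nn^{s+1-j}$ for $0\leq j\leq s+1$; in particular, $\omega_R = \Hom_G(R,G) = 0:_G \nn^k \cong \nn^{s+1-k}$ as $G$-modules, with an induced $R$-module structure since $\nn^k\cdot\nn^{s+1-k}\subseteq \nn^{s+1}=0$. The argument splits into two cases based on whether $2k$ exceeds $s+1$.

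\textbf{Case 1} ($2k\geq s+2$): Here $J=\nn^k$ satisfies $J^2\subseteq \nn^{2k}\subseteq \nn^{s+1}=0$, so $J\subseteq 0:J$. By \Cref{natural}, the ideal $I=(0:J)/J=\nn^{s+1-k}/\nn^k$ is a symmetric $\tau$-ideal of $R$. I claim $\tr(\omega_R)=I$. The inclusion $I\subseteq \tr(\omega_R)$ is immediate since $I$ is a $\tau$-ideal, and for the reverse I would invoke \Cref{vasconcelos} applied to the minimal free resolution of $R$ over $S$ (which has length $2$ since $S$ has global dimension $2$), and check that every entry of the matrix $E$ lies in $\nn^{s+1-k}/\nn^k$. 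The hypothesis that $K$ is infinite or that $f,g$ are monomials permits restricting attention to a single (multi)graded component via \Cref{stronger}.

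\textbf{Case 2} ($2k\leq s+1$): Here $\nn^{s+1-k}\supseteq \nn^k$, so the $\tau$-ideal from \Cref{natural} is zero, and a separate argument is needed. I would claim $\tr(\omega_R)=\Soc(R)=\nn^{k-1}/\nn^k$. For $\Soc(R)\subseteq \tr(\omega_R)$: since $\mu(\omega_R)=\dim_K\Soc(R)$ by \Cref{Juergen}, the composition $\omega_R \twoheadrightarrow \omega_R/\mm_R\omega_R \cong K^{\mu(\omega_R)} \xrightarrow{\sim} \Soc(R) \hookrightarrow R$ is an $R$-homomorphism whose image is all of $\Soc(R)$. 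For the reverse inclusion I would show each minimal generator $g\in G_{s+1-k}$ of $\omega_R$ satisfies $\varphi(g)\in \Soc(R)$ for every $\varphi\in \Hom_R(\omega_R,R)$; this follows by combining the $G$-annihilator relation of $g$ (which in embedding dimension $2$ is generated by two pure powers $x^{a-p}$ and $y^{b-q}$) with the syzygies of the form $y\,e_{g_i}=x\,e_{g_{i+1}}$ between consecutive monomial generators, which propagate the "pure power" annihilator constraints into all mixed terms. Once $\tr(\omega_R)=\Soc(R)$ is established, symmetry is automatic: $\Soc(R)$ is annihilated by $\mm_R$, so $\Soc(R)\cong K^{\sigma(R)}\cong \Soc(R)^\vee$ as $R$-modules.

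The main obstacle in both cases is the upper bound $\tr(\omega_R)\subseteq$ (the claimed $\tau$-ideal), i.e., showing that no $R$-homomorphism $\omega_R\to R$ strays outside the predicted image. This is where the embedding dimension $2$ hypothesis is essential: the resolution of $R$ over $S$ is short enough to analyze explicitly, and the syzygies of $\omega_R\cong \nn^{s+1-k}$ form a simple chain (of Koszul-like shape $y\,e_i=x\,e_{i+1}$) that lets the annihilator constraints from one generator be propagated to its neighbours, eventually forcing every image into the claimed ideal.
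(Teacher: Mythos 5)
Your overall strategy (reduce to symmetry of $\tr(\omega_R)$ via \Cref{easy}, obtain the lower bound from \Cref{natural}, and control the upper bound through \Cref{vasconcelos} and the syzygies of $\omega_R$) is the same as the paper's, and your Case 2 restricted to $k\le a$ (where $a=\deg f\le b=\deg g$) reproduces the paper's first case. However, the dichotomy on $2k$ versus $s+1$ is the wrong case division, and the values you assign to $\tr(\omega_R)$ are incorrect throughout the intermediate range $a<k<b$. Concretely, take $f=x^2$, $g=y^4$ and $k=3$, so that $s=4$ and $R=K[x,y]/(x^2,xy^2,y^3)$. This is a monomial almost complete intersection, and \Cref{tracemaci} gives $\tr(\omega_R)=(x,y)=\mm_R$. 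Your Case 1 applies (since $2k=6=s+2$) and predicts $\tr(\omega_R)=\nn^{s+1-k}/\nn^k=\mm_R^{2}=\Soc(R)$, which is strictly smaller; the deferred verification that every entry of the matrix $E$ lies in $\nn^{s+1-k}/\nn^k$ would therefore fail. Similarly, for $f=x^2$, $g=y^{10}$, $k=4$ your Case 2 predicts $\tr(\omega_R)=\Soc(R)=\mm_R^{3}$, while \Cref{tracemaci} again gives $\mm_R$. The correct trichotomy is $k\le a$, $a<k\le b$, $k>b$, with $\tr(\omega_R)$ equal to $\mm_R^{k-1}$, $\mm_R^{a-1}$, $\mm_R^{a+b-1-k}$ respectively; your formulas agree with this only in the first and third ranges.

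The missing idea is the treatment of the middle range $a<k\le b$. There the symmetric $\tau$-ideal $\mm_R^{a-1}$ is produced not from $G$ itself (where $(0:\nn^k)/\nn^k$ is too small, or zero) but by rewriting $R=\widetilde{G}/\tilde{\nn}^k$ for a \emph{different} complete intersection $\widetilde{G}=S/(f,p)$ with $\deg p=k$, and then applying \Cref{natural} to $\widetilde{G}$; this is precisely where the hypothesis that $K$ is infinite or that $f,g$ are monomials enters (one needs $p$ to be a nonzerodivisor modulo $f$ among the degree-$k$ forms), not, as you suggest, to invoke \Cref{stronger}. The upper bound $\tr(\omega_R)\subseteq\mm_R^{a-1}$ (resp.\ $\mm_R^{a+b-1-k}$ for $k>b$) is also substantially harder than your sketch indicates: the syzygy-chain propagation you describe works verbatim only when $R=S/\mm^k$, i.e.\ when $k\le a$; for $k>a$ one has to build a resolution of $R$ as a mapping cone over the resolution of $S/\mm^k$ and run a long-exact-sequence argument to bound the degrees of the second syzygies. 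As written, the proposal is not repairable without restructuring the case analysis and redoing both inclusions in the middle range.
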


\begin{proof}
Let $a=\deg(f)$ and $b=\deg(g)$. We may assume that $a\leq b$. Let $\mm_R$ be the maximal ideal of $R$. We claim that
\[
\tr(\omega_R)=
\begin{cases}
\mm_R^{k-1} & \text{if } k\leq a\\
\mm_R^{a-1} & \text{if } a<k\leq b\\
\mm_R^{a+b-1-k} & \text{if } k>b,
\end{cases}
\]
and that $\tr(\omega_R)$ is symmetric in all these cases. This then shows that $R$ is of Teter type.

Assume first that $k\leq a$. Then $R=S/(x,y)^k$,and
\[
0\to S(-k-1)^k\stackrel{A}{\To}S(-k)^{k+1}\to (x,y)^k\to 0.
\]
is the resolution of $(x,y)^k$. Here $A$ is the $(k+1)\times k$-matrix
\[
\begin{pmatrix}
y &  0& \cdots&\cdots  & 0\\
-x & y& \cdots & \cdots & 0\\
0  & -x &\ddots &\cdots& 0\\
\vdots& \vdots& \ddots &\ddots &\vdots\\
0& 0& \cdots&\ddots  & y\\
0& 0& \cdots&\cdots  & -x\\
\end{pmatrix}
\]
By \Cref{vasconcelos}, the entries of the kernel of $R^k\stackrel{A}{\To}R^{k+1}$ generate $\tr(\omega_R)$. Let $(f_1,\ldots,f_k)^{\sf T}$ be in the kernel of this map. We may assume that all $f_i$ are homogeneous. Since  $yf_1=0$, it follows that $\deg(f_1)\geq k-1$,  and hence $f_1\in \mm_R^{k-1}$. Next we have $-xf_1+yf_2=0$. Since $xf_1=0$, it follows that $yf_2=0$. As before, we deduce that $f_2\in \mm_R^{k-1}$. Proceeding in the way, we see that $f_i\in \mm_R^{k-1}$ for all $i$. This shows that $\tr(\omega_R)=\mm_R^{k-1}$. Note that $\mm_R^{k-1}=\Soc(R)$. It is obvious that $\Soc(R)$ is a symmetric ideal.

In the next steps we show that $\mm_R^{a-1}$ is a symmetric $\tau$-ideal of $R$ if $a<k\leq b$, and that $\mm_R^{a+b-k-1}$ is a symmetric $\tau$-ideal of $R$ if $k>b$. Indeed, if  $a<k\leq b$, then $R=S/(f, (x,y)^k)$. Hence we may write $R =\widetilde{G}/\tilde{\nn}^k$, where $\widetilde{G}=S/(f,p)$ is a complete intersection and $p$ is a minimal generator of $(x,y)^k$ and $\tilde{\nn}$ is the maximal ideal of $\tilde{G}$.
Indeed, if $K$ is infinite, then a non-zerodivisor $p$ modulo $f$ can be found among the generators of $(x,y)^k$. On the other hand, if $f$ and $g$ are monomials, then each of them is a pure power, say $f=x^a$. In this case we may choose $p=y^k$.
The  socle degree of this complete intersection is $a+k-2$. It follows that $0: \tilde{\nn}^k=\tilde{\nn}^{a-1}$. Since $\tilde{\nn}^k\subset 0: \tilde{\nn}^k$, the desired conclusion follows from \Cref{natural}.
Next assume $k>b$. Then in the ring $G$ we have $0:\nn^k=\nn^{a+b-1-k}$, because  the socle degree of $G$ is $a+b-2$. Since $\nn^k\subset \nn^{a+b-1-k}$, the desired conclusion follows again from \Cref{natural}.

It remains to be shown that any $\tau$-ideal is contained in $\mm_R^{a-1}$ if $a<k\leq b$, and is contained in $\mm_R^{a+b-1-k}$ if $k>b$.

Let us first treat the case in which  $a<k\leq b$. Then $R=S/(f,\mm^k)$, and since $\mm^k:(f)=\mm^{k-a}$, we obtain the exact sequence
\[
0\to (S/(\mm^{k-a}))(-a)\to S/\mm^k\to R\to 0.
\]
Let $\FF$ be the minimal graded resolution of $\mm^k$ and $\GG$ be the minimal graded free resolution of $(S/(\mm^{k-a}))(-a)$. The multiplication map given by $f$ can be lifted to a graded complex homomorphism $\GG\to \FF$, so that we obtain a
commutative diagram

\bigskip
\begin{tikzcd}
\GG\: & 0\arrow[r]& S(-k-1)^{k-a}\arrow[d, "\beta"']\arrow[r]&S(-k)^{k-a+1}\arrow[d,"\alpha"']\arrow[r] & S(-a)\arrow[d, "f"']\arrow[r]& 0\\
\FF\: &0\arrow[r]& S(-k-1)^k \arrow[r]& S(-k)^{k+1}\arrow[r] & S\arrow[r] &0.
\end{tikzcd}

\noindent
Since the multiplication map $S(-a)\stackrel{f}{\To} S$ is injective, it follows  that for any element $g\in (G_1)_k$ we have that $\alpha(g)\neq 0$. From this it can be seen that $\alpha$  is injective. Similarly, $\beta$ is injective.   In fact, $\alpha$ and $\beta$ are  split injective,  because by degree reasons, their image  is a direct summand of  $S(-k)^{k+1}$ and $S(-k-1)^{k}$, respectively.

Let $\DD$ be the mapping cone of $\GG\to \FF$. Then we obtain the short exact sequence of complexes
\[
0\To \FF\tensor_SR\To \DD\tensor_SR\To \GG[-1]\tensor_SR\To 0,
\]
where $\GG[-1]$ is the complex $\GG$ with the homological shift $-1$. In other words, $\GG[-1]_i=G_{i-1}$ for all $i$. This short exact sequence give rise to the long exact sequence
\[
\cdots \to H_2(\FF\tensor_SR)\To H_2(\DD\tensor_SR)\To H_1(\GG\tensor_SR)\to H_1(\FF\tensor_SR)\to \cdots
\]
The map $H_1(\GG\tensor_SR)\to H_1(\FF\tensor_SR)$ is a connecting homomorphism in the long exact sequence and  is induced by $\alpha\tensor_S R$. Since $\alpha$ is split injective, it follows that $\alpha\tensor_S R$ is split exact as well. Therefore, $H_1(\GG\tensor_SR)\to H_1(\FF\tensor_SR)$ is injective. This implies that $H_2(\FF\tensor_SR)\to H_2(\DD\tensor_S R)$ is surjective. Since $\DD$ is a graded free $S$-resolution of $R$, it follows that $H_2(\DD\tensor_S R)\iso \Tor^S_2(R,R)$ as graded $R$-modules. Hence we have a graded epimorphism $H_2(\FF\tensor_SR)\to \Tor^S_2(R,R)$.

The graded minimal free $S$-resolution of $R$ is given as  $0\to F_2/G_2\to F_1/G_1\dirsum S(-a)\to S\to 0$, and hence it has the form
\[
\overline{\DD}\: 0\to S(-k-1)^a\to S(-k)^a\dirsum S(-a)\to S\to 0.
\]
This yields the exact sequence $0\to \Tor^S_2(R,R)\to  R(-k-1)^a\to R(-k)^a\dirsum R(-a)$, and by using the result of Vasconcelos we need to show that  $\Tor^S_2(R,R)_{d+k+1}=0$ for $d<a-1$. For this it suffices to show that $H_2(\FF\tensor_SR)_{d+k+1}=0$ for $d<a-1$.

Note that $H_2(\FF\tensor_S R)$ is the kernel of $R(-k-1)^k\to R(-k)^{k+1}$. Hence we get the exact sequence
$
H_2(\FF\tensor_S R)_{d+k+1}\to R_d^k\to R_{d+1}^k.
$
Since $d<a-1$, it follows that $R_d=S_d$ and $R_{d+1}=S_{d+1}$, so that $0\to H_2(\FF\tensor_S R)_{d+k+1}\to S_d^k\to S_{d+1}^k$ is exact. Since $S(-k-1)^k\to S(-k)^{k+1}$ is injective, we deduce that $H_2(\FF\tensor_S R)_{d+k+1}=0$, as desired.

Now we deal with the case $k>b$. If $k=a+b-2$,  then $R$ is even a Teter ring, and we are done. Thus we may now assume that $b<k\leq a+b-1$.    Then  we have  the following  exact sequence
\[
0\to (S/(f,\mm^k):(g))(-b)\to S/(f,\mm^k)\to R\to 0.
\]
Let $h\in (f,\mm^k):(g)$ be a homogeneous polynomial with $\deg(h)=c$.  Then $hg\in (f,\mm^k)$. If $c+b<k$, then $hg=h'f$ for some $h'\in S$. Since $f,g$ is a regular sequence, it follows then that $f$ divides $h$, and this implies that $h\in (f,\mm^k)$. On the other hand, if $c+b\geq k$, then $hg\in \mm^k$. Therefore, $h\in \mm^{k-b}$, and hence  $(f,\mm^k):(g)=(f,\mm^{k-b})$. Since $k\leq a+b-1$, it follows that  $(f,\mm^k):(g)=(f,\mm^{k-b})=\mm^{k-b}$, and we obtain the  exact sequence
\[
0\to (S/(\mm^{{k-b}}))(-b)\to S/(f,\mm^k)\to R\to 0.
\]
The map $(S/(\mm^{k-b}))(-b)\stackrel{g}{\to}  S/(f,\mm^k)$ can be extended to a complex homomorphism $\HH\to \DD$ with $\alpha\: H_1\to D_1$ and $\beta\: H_2\to D_2$ split injective.  Here,
$$\HH: 0\to (-k-1)^{k-b} \to S(-k)^{k-b+1}\to S(-1)\to 0$$ is the free $S$-resolution of  $(S/(\mm^{k-b}))(-b)$ and  $\DD$ is the mapping cone from before.  Composing $\HH\to \DD$ with the  complex homomorphism $\DD\to \overline{\DD}$ onto the graded minimal free $S$-resolution  of $S/(f,\mm^k)$, we obtain   a commutative diagram

\bigskip

\begin{tikzcd}
\HH\:& 0\arrow[r]& S(-k-1)^{k-b}\arrow[d, "\beta'"']\arrow[r]&S(-k)^{k-b+1}\arrow[d,"\alpha'"']\arrow[r] & S(-b)\arrow[d, "g"']\arrow[r]& 0\\
\overline{\DD}\:& 0\arrow[r]& S(-k-1)^a \arrow[r]& S(-k)^{a}\dirsum S(-a)\arrow[r] & S\arrow[r] &0
\end{tikzcd}

\medskip
\noindent
with $\alpha'$ and $\beta'$ split injective. This yields a minimal  free $S$-resolution of $R$ which is of the form
\[
0\to S(-k-1)^{a+b-k}\to S(-k)^{a+b-1-k}\dirsum S(-b)\dirsum S(-a)\to S\to 0.
\]
Let $\EE$ be the mapping cone of the complex homomorphism $\HH\to \DD$. Then we obtain the short exact sequence of complexes
\[
0\to \DD\tensor_SR\to \EE\tensor_SR\to \HH[-1]\tensor_SR\to 0
\]
which gives rise to the long exact sequence
\[
\cdots \to H_2(\DD\tensor_SR)\To H_2(\EE\tensor_SR)\To H_1(\HH\tensor_SR)\to H_1(\DD\tensor_SR)\to \cdots
\]
The map $H_1(\HH\tensor_SR)\to H_1(\DD\tensor_SR)$ is a connecting homomorphism in the long exact sequence which is induced by $\alpha\tensor_S R$. Since $\alpha$ is split injective, it follows that $\alpha\tensor_S R$ is split exact as well. Therefore, $H_1(\HH\tensor_SR)\to H_1(\DD\tensor_S R)$ is injective. This implies that $H_2(\DD\tensor_SR)\to H_2(\EE\tensor_S R)$ is surjective. Since $\EE\tensor_SR\iso R(-k-1)^{a+b-k}$, we need to show that $H_2(\EE\tensor_SR)_{d+k+1}=0$ for $d<a+b-1-k$. We have the graded epimorphisms  $H_2(\DD\tensor_SR)\to H_2(\EE\tensor_S R)$ and $H_2(\FF\tensor_SR)\to H_2(\DD\tensor_S R)$. Thus it suffices to show that $H_2(\FF\tensor_SR)_{d+k+1}=0$ for $d<a+b-1-k$. Note that $R(-k-1)_{d+k+1}=R_d=S_d$ and that  $R(-k)_{d+k+1}=R_d=S_{d+1}$,  since $d<a+b-k-1<a-1$. The last equation follows since
$k>b$.  Thus we have an exact sequence $0\to H_2(\FF\tensor_SR)_{d+k+1}\to S^k_d\to S^{k+1}_{d+1}$. It follows that $ H_2(\FF\tensor_SR)_{d+k+1}=0$, because  $S(-k-1)^k\to S(-k)^{k+1}$ is exact.
\end{proof}

We expect that if $G$ is a $0$-dimensional local Gorenstein $K$-algebra with the maximal ideal $\mm$, then $G/\mm^k$ is of Teter type for all $k>1$ for which $\mm^k\neq 0$. The converse however is not true in general. Indeed, there is a $K$-algebra $R$ (see \Cref{TRACE_NOT_CONNECTED_TETER_TYPE}) which is of Teter type whose Hilbert function is given by the sequence $(1,2,3,4,2)$. Suppose $R$ is of the form $G/\mm^k$, where $G$ is a Gorenstein ring. Then $\embdim(G)=2$ and hence $G$ is a complete intersection. This implies that any two consecutive integers in the sequence describing the Hilbert function of $G$ differ at most by one. Hence if $R$ of the form $G/\mm^k$, then a similar property should hold for the Hilbert function of $R$, which is not the case in our example.

\section{Monomial $K$-algebras of Teter type}

Let $K$ be a field, and let $R$ be a $0$-dimensional standard graded $K$-algebra with graded maximal ideal $\mm$.  The $R$-module structure of $\omega_R=\Hom_K(R,K)$ is given as follows: for $a\in R$ and for $\varphi\in \Hom_K(R,K)$, one defines $a\varphi\in \Hom_K(R,K)$ by setting $(a\varphi)(b)=\varphi(ab)$ for $b\in R$.  The $K$-algebra $R$ admits a monomial $K$-basis $\B$. The dual basis $\B^*$ of $\B$ consists of those elements $u^*$ with $u\in\B$, where for $v\in \B$,
\[
u^*(v)=
\begin{cases}
1 & \text{if  $v=u$},  \\

0 & \text{if  $v\neq u$}.
\end{cases}
\]

A $K$-algebra $R=S/I$, where $S = K[x_1, \ldots, x_n]$ is the polynomial ring, is called a {\em monomial $K$-algebra} if $I \subseteq S$ is a monomial ideal.  A monomial $K$-algebra has a canonical monomial $K$-basis, namely $\P=\{u+I\: u\in S\setminus I \, \, \text{is a monomial}\}$.  One denotes the residue class modulo $I$ of a monomial $u\not\in  I$ again by $u$. Then the canonical monomial $K$-basis $\P$ of $R$ consists of those monomials $u\in S$ which do not belong to $I$. For the rest of this section $R$ will denote a $0$-dimensional monomial $K$-algebra.

It is noted that $R$ and $\omega_R$ are $\ZZ^n$-graded $R$-modules. Let $\ab\in \ZZ^n$. Then
\[
R_\ab=
\begin{cases}
 K\xb^{\ab}  &\text{if  $\xb^{\ab}\in \P$},  \\

0 & \text{otherwise},
\end{cases}
\]
and
\[
(\omega_R)_\ab=
\begin{cases}
 K(\xb^{-\ab})^* &  \text{if  $\xb^{-\ab}\in \P$},  \\
0 & \text{otherwise}.
\end{cases}
\]
One then defines $\deg(\xb^{\ab}) = \ab$ for $\xb^{\ab}\in \P$ and $\deg((\xb^{\ab})^*) = -\ab$ for $(\xb^{\ab})^*\in \P^*$.

\medskip
In this section we are interested in $0$-dimensional multigraded $K$-algebras. Unless otherwise stated, the concepts like $\tau$-ideals, symmetric ideals and rings of Teter type are always understood to be in the multigraded sense. Precise definitions will be given below.

The above introduced concepts and the related statements can be also expressed in terms of poset language.
A {\em poset} $(\X, \preceq)$ is a set $\X$ together with a relation $\preceq$, which is reflexive, antisymmetric and transitive. The canonical monomial $K$-basis $\P$ of a monomial $K$-algebra $R$ has the structure of a poset: $u_1\preceq_{\P} u_2$ if $u_2|u_1$, or, equivalently, $\deg(u_2)\le \deg(u_1)$.  The dual basis $\P^*$, which is the monomial basis of $\omega_R$, has a dual poset structure: $u_2^*\preceq_{\P^*} u_1^*$ if and only if $\deg(u_1^*)\le \deg(u_2^*)$. Note that
$$
u_1\preceq_{\P} u_2\Leftrightarrow \deg(u_2)\le \deg(u_1)\Leftrightarrow \deg(u_1^*)\le \deg(u_2^*)\Leftrightarrow u_2^*\preceq_{\P^*} u_1^*.
$$

The poset $(\P,\preceq_{\P})$ will be called the {\em divisor poset} of $R$. The poset $(\P^*,\preceq_{\P^*})$ will be  called the divisor poset of $\omega_R$. By abuse of notation, we will denote these posets simply by $\P$ and $\P^*$, respectively. We will also write $\preceq$ for both $\preceq_{\P}$ and $\preceq_{\P^*}$ since there is no risk of confusion.
Note that since $R$ is $0$-dimensional, both $\P$ and $\P^*$ are finite posets.

Recall that a {\em poset ideal} of $(\X,\preceq)$  is a subset $\U \subseteq \X$ such that, if $u \in \U, v \in \X$ together with $v \preceq u$, then $v \in \U$. In other words, a poset ideal of $(\X,\preceq)$ is a downward closed subset of $\X$. Dually, an {\em order ideal} of $(\X,\preceq)$ is an upward closed subset of $\X$. Note that if $\I$ is a poset (respectively, order) ideal of $\P$, then $\I^*$ is an order (respectively, poset) ideal of $\P^*$. Conversely, every order (respectively, poset) ideal of $\P^*$ is of the form $\I^*$, where $\I$ is a poset (respectively, order) ideal of $\P$. Note that monomial ideals of $R$ are in bijection with poset ideals of $\P$ and therefore we will be only interested in poset ideals of $\P$ and order ideals of $\P^*$.

For a poset ideal $\I$ of $\P$, the set of maximal (respectively, minimal) elements of $\I$ will be denoted by $\Gen (\I)$ (respectively, $\Soc(\I)$). The same notations will be used for order ideals of $\P^*$. Note that $\Gen(\I^*)=(\Soc(\I))^*$ and $\Soc(\I^*)=(\Gen(\I))^*$.

Let $(\X,\preceq)$ be a poset. Then $u_1$ is called a {\em lower neighbor} of $u_2$, denoted by $u_1\precdot u_2$ (and $u_2$ is called an {\em upper neighbor} of $u_1$) if $u_1\prec u_2$ (that is, $u_1\preceq u_2$ and $u_1\neq u_2$), and there is no $v\in\X$ such that $u_1\prec v\prec u_2$. In this case one sometimes says that $u_2$ {\em covers} $u_1$. Note that $u_1\precdot u_2$ in $\P$ if and only if $u_2^*\precdot u_1^*$ in $\P^*$ if and only if $u_1/u_2=x_i$ for some $i$.




Let $\I$ be a poset ideal of $\P$ and $\J^*$ be an order ideal of $\P^*$. A bijection $\varphi : \J^* \to \I$ is called a {\em multigraded isomorphism} if for all $v_1^*,v_2^* \in \J^*$ we have

$$\deg(\varphi(v_1^*)) - \deg(v_1^*) = \deg(\varphi(v_2^*)) - \deg(v_2^*).$$
The equation above can be rewritten as $\varphi(v_1^*)v_1=\varphi(v_2^*)v_2$.
In this case we write $\J^*\iso \I$.
We define the {\em multidegree} of a multigraded isomorphism $\varphi$ to be $\deg(\varphi):=\deg(\varphi(v_1^*)) - \deg(v_1^*)$. In this section we will say   “isomorphism” to mean “multigraded isomorphism”.
\begin{Remark}
\label{dual map}
Clearly, if $\varphi: \J^* \to \I$ defines an isomorphism, then $\varphi^*: \I^* \to \J$ given by $\varphi^*(u^*)=(\varphi^{-1}(u))^*$ also defines an isomorphism. In other words, $\varphi^*(u^*)=v^*$ if and only if $\varphi(v)=u$. Moreover, $\deg(\varphi)=\deg(\varphi^*)$.
Indeed, if $u=\varphi(v)$, we get
\begin{multline*}
\deg(\varphi^*(u^*))-\deg(u^*)=\deg((\varphi^{-1}(u))^*)-\deg(u^*)=\\=\deg(v^*)-\deg(u^*)=\deg(u)-\deg(v)=\deg(\varphi(v))-\deg(v).
\end{multline*}

Note that any isomorphism $\varphi$ preserves and reflects covering relations. Indeed:
\begin{multline*}
v_1^* \precdot v_2^* \Leftrightarrow\ \exists i: v_2=v_1x_i \Leftrightarrow \exists i:\varphi(v_1^*)=\varphi(v_2^*)x_i \Leftrightarrow \varphi(v_1^*) \precdot \varphi(v_2^*).
\end{multline*}

\end{Remark}

A poset ideal $\I$ of $\P$ is called {\em symmetric} if $\I\iso \I^*$.

We say that a poset ideal $\J\subseteq \P$ is a {\em companion} of a poset ideal $\I\subseteq \P$ if $\I\iso \J^*$. In this case we also have $\J\iso\I^*$ and thus $\I$ is a companion of $\J$. We say that a poset ideal $\I \subseteq \P$ is a {\em $\tau$-ideal} if it has a companion.
Note that a poset ideal $\I\subseteq \P$ is symmetric if and only if it is a companion of itself. If these equivalent conditions hold, then $\I$ is clearly a $\tau$-ideal, but the converse is not true in general.

We remark that the notions of a companion, a $\tau$-ideal and a symmetric ideal defined here are the multigraded versions of those defined in \Cref{abstract}, translated into the poset language.
\begin{Example}
\label{homomorphism}

Let $R = K[x, y]/(x^3, y^4, xy^2)$.  The divisor posets $\P$ of $R$ and $\P^*$ of $\omega_R$ are drawn below.

\begin{figure}[hbt]
\begin{center}
\psset{unit=0.7cm}
\begin{pspicture}(3.62,5.51)(19.22,12.05)
\psline(8.,11.)(7.,10.)
\psline(8.,11.)(9.,10.)
\psline(9.,10.)(8.,9.)
\psline(7.,10.)(8.,9.)
\psline(7.,10.)(6.,9.)
\psline(8.,9.)(7.,8.)
\psline(6.,9.)(7.,8.)
\psline(9.,10.)(10.,9.)
\psline(10.,9.)(11.,8.)
\psline(15.,11.)(14.,10.)
\psline(15.,11.)(14.,10.)
\psline(14.,10.)(15.,9.)
\psline(15.,11.)(16.,10.)
\psline(16.,10.)(15.,9.)
\psline(16.,10.)(17.,9.)
\psline(15.,9.)(16.,8.)
\psline(17.,9.)(16.,8.)
\psline(18.,10.)(17.,9.)
\psline(18.,10.)(19.,11.)

\psdot(8.,11.)
\rput(7.95,11.40) {$1$}
\psdot(7.,10.)
\rput (6.64, 10.14) {$x$}
\psdot(9.,10.)
\rput (9.36, 10.1) {$y$}
\psdot(8.,9.)
\rput (8.55,8.88) {$xy$}
\psdot(6.,9.)
\rput (5.48,9) {$x^2$}
\psdots(7.,8.)
\rput (6.95,7.5) {$x^2y$}
\psdot(10.,9.)
\rput (10.54,9.1) {$y^2$}
\psdot(11.,8.)
\rput (11.44,8) {$y^3$}
\psdot(15.,11.)
\rput (15.05,11.52) {$(x^2y)^*$}
\psdot(14.,10.)
\rput (13.14,10) {$(x^2)^*$}
\psdot(16.,10.)
\rput (16.89, 10.18) {$(xy)^*$}
\psdot(15.,9.)
\rput (14.48, 8.78) {$x^*$}
\psdot(16.,8.)
\rput (16.10,7.4) {$1^*$}
\psdot(17.,9.)
\rput (17.5, 8.8) {$y^*$}
\psdot(18.,10.)
\rput (18.85, 9.96) {$(y^2)^*$}
\psdot(19.,11.)
\rput (19.78, 11) {$(y^3)^*$}

\end{pspicture}
\end{center}
\caption{The divisor posets $\P$ and $\P^*$}
\label{poset of Example 3.1}
\end{figure}

Let $\I = \{x, x^2, xy, x^2y\}$. Then $\I^* = \{(x^2y)^*, (x^2)^*, (xy)^*, x^*\}$.  Note that $\I$ is a poset ideal of $\P$ and thus $\I^*$ is an order ideal of $\P^*$.  The bijection $\varphi : \I^* \to \I$ defined by setting
\[
\varphi((x^2y)^*) = x, \, \varphi((x^2)^*) = xy, \, \varphi((xy)^*) = x^2, \, \varphi(x^*) = x^2y
\]
is an isomorphism. Shortly put, the isomorphism is given by $\varphi(v^*)=x^3y/v$ and the multidegree of this map is $(3,1)$. Therefore, $\I$ is a companion of itself, or, in other words, $\I$ is symmetric.

Let $\J = \{y^2, y^3\}$. Then $\J^*= \{(y^3)^*, (y^2)^*\}$.  The bijection $\psi : \J^* \to \J$ defined by setting
\[
\psi((y^3)^*) = y^2, \, \psi((y^2)^*) = y^3
\]
is an isomorphism. Shortly put, the isomorphism is given by $\psi(v^*)=y^5/v$ and the multidegree of this map is $(0,5)$.
Therefore, $\J$ is a companion of itself, or, in other words, $\J$ is symmetric.

Furthermore, the bijection $\mu : \I^*\cup \J^* \to \I \cup \J$ defined by setting
\[
\mu(u^*) =
\begin{cases}
\varphi(u^*) & \text{if  $u^* \in \I^*$},  \\
\psi(u^*) & \text{if  $u^* \in \J^*$}
\end{cases}
\]
is {\em not} an isomorphism. Moreover, it can be shown that there exists no isomorphism $\I^*\cup \J^*\to \I\cup \J$, even though the corresponding posets are isomorphic in the classical sense.

\end{Example}


In \Cref{homomorphism}, both $\I$ and $\J$ are $\tau$-ideals.  However, it can be shown that the poset ideal $\I \cup \J$ is {\em not} a $\tau$-ideal.

\begin{Example}
\label{EXAMPLE_Z}

Let $R = K[x,y]/(x^5, y^4, x^2y^2, x^4y)$.  The divisor posets $\P$ of $R$ and $\P^*$ of $\omega_R$ are drawn below.

\begin{figure}[hbt]
\begin{center}
\psset{unit=0.8cm}

\begin{pspicture}(3.92,9.1)(20.76,16.92)
\psline(9.,15.)(8.,14.)
\psline(8.,14.)(7.,13.)
\psline(7.,13.)(6.,12.)
\psline(6.,12.)(5.,11.)
\psline(9.,15.)(10.,14.)
\psline(10.,14.)(11.,13.)
\psline(11.,13.)(12.,12.)
\psline(8.,14.)(9.,13.)
\psline(10.,14.)(9.,13.)
\psline(9.,13.)(8.,12.)
\psline(7.,13.)(8.,12.)
\psline(9.,13.)(10.,12.)
\psline(11.,13.)(10.,12.)
\psline(8.,12.)(7.,11.)
\psline(6.,12.)(7.,11.)
\psline(10.,12.)(11.,11.)
\psline(12.,12.)(11.,11.)
\psline(20.,11.)(21.,12.)
\psline(20.,11.)(19.,12.)
\psline(21.,12.)(20.,13.)
\psline(19.,12.)(20.,13.)
\psline(21.,12.)(22.,13.)
\psline(20.,13.)(21.,14.)
\psline(21.,14.)(22.,13.)
\psline(23.,14.)(22.,13.)
\psline(22.,15.)(21.,14.)
\psline(22.,15.)(23.,14.)
\psline(19.,12.)(18.,13.)
\psline(20.,13.)(19.,14.)
\psline(19.,14.)(18.,13.)
\psline(17.,14.)(18.,13.)
\psline(18.,15.)(19.,14.)
\psline(18.,15.)(17.,14.)
\psline(16.,15.)(17.,14.)

\psdot(9.,15.)
\rput(8.97,15.40) {$1$}
\psdot(8.,14.)
\rput(7.59,14.18) {$x$}
\psdot(10.,14.)
\rput(10.35,14.15) {$y$}
\psdot(9.,13.)
\rput(9.48,13) {$xy$}
\psdot(7.,13.)
\rput(6.65,13.27) {$x^2$}
\psdot(11.,13.)
\rput(11.45,13.20) {$y^2$}
\psdot(8.,12.)
\rput(8.35,11.75) {$x^2y$}
\psdot(10.,12.)
\rput(9.45,11.75) {$xy^2$}
\psdot(6.,12.)
\rput(5.57,12.2) {$x^3$}
\psdot(12.,12.)
\rput(12.36,12) {$y^3$}
\psdot(5.,11.)
\rput(4.8,10.52) {$x^4$}
\psdot(7.,11.)
\rput(6.88,10.52) {$x^3y$}
\psdot(11.,11.)
\rput(10.95,10.52) {$xy^3$}
\psdots(20.,11.)
\rput(20.1,10.52) {$1^*$}
\psdots(19.,12.)
\rput(18.57,11.75) {$x^*$}
\psdot(18.,13.)
\rput(17.23,12.75) {$(x^2)^*$}
\psdot(21.,12.)
\rput(21.32,11.80) {$y^*$}
\psdot(20.,13.)
\rput(20.07,13.65) {$(xy)^*$}
\psdot(22.,13.)
\rput(22.6,12.80) {$(y^2)^*$}
\psdot(17.,14.)
\rput(16.21,13.75) {$(x^3)^*$}
\psdot(19.,14.)
\rput(19.35,14.5) {$(x^2y)^*$}
\psdot(21.,14.)
\rput(20.78, 14.5) {$(xy^2)^*$}
\psdot(23.,14.)
\rput(23.72,14) {$(y^3)^*$}
\psdot(16.,15.)
\rput(15.70,15.50) {$(x^4)^*$}
\psdot(18.,15.)
\rput(17.94,15.50) {$(x^3y)^*$}
\psdot(22.,15.)
\rput(22.2,15.50) {$(xy^3)^*$}
\end{pspicture}
\end{center}
\caption{The divisor posets $\P$ and $\P^*$}
\label{Example 4.2}
\end{figure}

\noindent
Let $\I = \{x^3, x^4, x^3y\}$ and $\J = \{y^3, xy^2, xy^3\}$ be poset ideals of $\P$.  Then $\I^* = \{(x^4)^*, (x^3y)^*, (x^3)^*\}$ and $\J^* = \{(xy^3)^*, (y^3)^*, (xy^2)^*\}$.  Then the bijection $\varphi : \I^* \cup \J^* \to \I \cup \J$ defined by setting
\[
\varphi((xy^3)^*) = x^3, \, \, \, \varphi((y^3)^*) = x^4, \, \, \, \varphi((xy^2)^*) = x^3y,
\]
\[
\varphi((x^4)^*) = y^3, \, \, \, \varphi((x^3y)^*) = xy^2, \, \, \, \varphi((x^3)^*) = xy^3
\]
is an isomorphism. Shortly put, the isomorphism is given by $\varphi(v^*)=x^4y^3/v$ and the multidegree of this map is $(4,3)$.
Thus each of the poset ideals $\I, \J$ (by restricting the map accordingly) and $\I \cup \J$ is a $\tau$-ideal. Note that $\I$ and $\J$ are companions via a restriction of this map. Furthermore, $\I \cup \J$ is symmetric via $\varphi$.

Note that the $\tau$-ideal $\I$ is not even symmetric in the local sense. Indeed, if $\I$ is symmetric, then by Remark \ref{Juergen}, for the corresponding monomial ideal $I$, the number of generators $\mu(I)$ of $I$ coincides with the socle dimension $\sigma(I)$ of $I$. This is not the case here, since $\mu(I)=|\Gen(\I)|=1$ and $\sigma(I)=|\Soc(\I)|=2$.

\end{Example}

\begin{Lemma}
\label{MIGHT_BE_TRUE}
Let $\I$ and $\J$ be companions.  Then $\I \cup \J$ is symmetric.
\end{Lemma}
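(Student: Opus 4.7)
The plan is to exhibit an explicit multigraded isomorphism $\mu\colon (\I\cup\J)^* \to \I\cup\J$, using the fact that a multigraded isomorphism between monomial poset ideals is completely determined by its multidegree (it is simply ``division of a fixed monomial''). Since $\I$ and $\J$ are companions, there is a multigraded isomorphism $\varphi\colon \J^*\to \I$ of some multidegree $\ab\in\ZZ^n$. Unwinding the definition of multidegree together with $\deg(v^*)=-\deg(v)$, this forces the explicit formula $\varphi(v^*)=\xb^{\ab}/v$ for every $v\in\J$. By \Cref{dual map}, $\varphi^*\colon \I^*\to \J$ is also a multigraded isomorphism of multidegree $\ab$, and the same unwinding shows $\varphi^*(u^*)=\xb^{\ab}/u$ for every $u\in\I$.

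The natural candidate for a self-isomorphism is therefore the single ``flip'' map
\[
\mu\colon (\I\cup\J)^* = \I^*\cup \J^*\longrightarrow \I\cup\J,\qquad \mu(w^*):=\xb^{\ab}/w.
\]
First I would check well-definedness: the right-hand side depends only on $w$, not on whether we regard $w^*$ as an element of $\I^*$ or of $\J^*$, so there is no ambiguity on the intersection $\I^*\cap\J^*$. Moreover, if $w\in\I$ then $\xb^{\ab}/w=\varphi^*(w^*)\in\J$, and if $w\in\J$ then $\xb^{\ab}/w=\varphi(w^*)\in\I$; either way $\mu(w^*)\in\I\cup\J$, so $\mu$ is a genuine map into $\I\cup\J$.

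Next I would verify that $\mu$ is bijective and has constant multidegree. Injectivity is immediate from the cancellation $w_1\neq w_2 \Rightarrow \xb^{\ab}/w_1\neq \xb^{\ab}/w_2$. For surjectivity, if $u\in\I$ then $\xb^{\ab}/u\in\J$, hence $(\xb^{\ab}/u)^*\in\J^*\subseteq (\I\cup\J)^*$ and $\mu((\xb^{\ab}/u)^*)=u$; the case $u\in\J$ is symmetric. Finally,
\[
\deg(\mu(w^*))-\deg(w^*)=\deg(\xb^{\ab}/w)+\deg(w)=\ab
\]
is independent of $w$, so $\mu$ is a multigraded isomorphism. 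Thus $\I\cup\J$ is a companion of itself, i.e.\ symmetric.

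There is no real obstacle here: the potentially delicate point is well-definedness on the overlap $\I\cap\J$, where one might worry that $\varphi$ and $\varphi^*$ disagree; but because both maps have the same multidegree $\ab$ and multigraded isomorphisms of monomial poset ideals are rigid (forced to be $w\mapsto \xb^{\ab}/w$), they automatically glue into the single uniform rule defining $\mu$.
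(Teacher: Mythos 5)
Your proof is correct and follows essentially the same route as the paper's: both glue $\varphi\colon \J^*\to\I$ with its dual $\varphi^*\colon\I^*\to\J$ into a single map on $\I^*\cup\J^*$, using the equality of their multidegrees to guarantee agreement on the overlap $\I^*\cap\J^*$. Your version merely makes the rigidity explicit by writing both maps uniformly as $w^*\mapsto \xb^{\ab}/w$, which is a slightly more concrete rendering of the same argument.
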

\begin{proof}
Let $\I$ and $\J$ be companions. Then there is an isomorphism $\varphi: \J^* \to \I$. By \Cref{dual map}, there is a dual map $\varphi^*: \I^* \to \J$ with $\deg(\varphi)=\deg(\varphi^*)$. Then we define a map $\psi: \I^*\cup \J^* \to \I\cup \J$ by setting

\[
\psi(u^*)=
\begin{cases}
\varphi(u^*) & \text{if  $u^* \in \J^*$},  \\
\varphi^*(u^*) & \text{if  $u^* \in \I^*$}.
\end{cases}
\]
Since $\deg(\varphi)=\deg(\varphi^*)$, these maps agree on $I^*\cap J^*$ and therefore $\psi$ is well defined. Moreover, $\psi$ is an isomorphism and $\deg(\psi)=\deg(\varphi)=\deg(\varphi^*)$.
\end{proof}
The {\em trace} of $\P^*$, denoted by $\tr(\P^*)$, is defined to be the union of all $\tau$-ideals of $\P$. We say that $\P$ is {\em of Teter type} if $\tr(\P^*)$ is a $\tau$-ideal.  In other words, $\P$ is of Teter type if there is an order ideal $\J^*$ of $\P^*$ and an isomorphism $\varphi: \J^* \to \tr(\P^*)$.

As an immediate corollary of Lemma \ref{MIGHT_BE_TRUE} we have

\begin{Corollary}(Compare with \Cref{easy})\label{unionsymmetric}
The trace of $\P^*$ is the union of the symmetric ideals of $\P$. In particular, $\P$ is of Teter type if and only if $\tr(\P^*)$ is a symmetric poset ideal of $\P$
\end{Corollary}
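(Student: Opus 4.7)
The plan is to deduce both assertions almost mechanically from \Cref{MIGHT_BE_TRUE} together with the symmetry of the companion relation: if $\J$ is a companion of $\I$ (so that $\I \iso \J^*$), then dualizing gives $\J \iso \I^*$, i.e.\ $\I$ is a companion of $\J$; in particular, a companion of a $\tau$-ideal is again a $\tau$-ideal.

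For the first assertion, let $\Sigma$ denote the union of all symmetric poset ideals of $\P$. A symmetric ideal is by definition its own companion, hence a $\tau$-ideal, so $\Sigma \subseteq \tr(\P^*)$. Conversely, given any $\tau$-ideal $\I$ with companion $\J$, \Cref{MIGHT_BE_TRUE} says that $\I \cup \J$ is symmetric, so $\I \subseteq \I \cup \J \subseteq \Sigma$. Taking the union over all $\tau$-ideals $\I$ gives $\tr(\P^*) \subseteq \Sigma$, which together with the opposite inclusion proves equality.

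For the ``in particular'' part, one direction is immediate: if $\tr(\P^*)$ is symmetric, then it is a $\tau$-ideal, and so $\P$ is of Teter type. For the converse, assume $\tr(\P^*)$ is a $\tau$-ideal and let $\J$ be a companion of it. By the symmetry of the companion relation noted above, $\J$ is itself a $\tau$-ideal, so $\J \subseteq \tr(\P^*)$ by the very definition of the trace. Applying \Cref{MIGHT_BE_TRUE} to the companion pair $\bigl(\tr(\P^*), \J\bigr)$ shows that $\tr(\P^*) \cup \J$ is symmetric; but this union is just $\tr(\P^*)$ itself, which is therefore symmetric. The only step that requires a moment's thought is recognizing that ``$\J$ is a companion of a $\tau$-ideal'' forces $\J \subseteq \tr(\P^*)$; everything else is a direct appeal to the preceding lemma.
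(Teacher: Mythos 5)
Your proof is correct and is exactly the argument the paper has in mind: the paper states this as an ``immediate corollary'' of \Cref{MIGHT_BE_TRUE} without writing out details, and your write-up simply makes that deduction explicit (every $\tau$-ideal $\I$ sits inside the symmetric ideal $\I\cup\J$, and for the Teter-type statement the companion $\J$ of $\tr(\P^*)$ is itself a $\tau$-ideal, hence absorbed into the trace). No gaps; the key observation you flag --- that a companion of a $\tau$-ideal is again a $\tau$-ideal --- is already recorded in the paper's discussion of companions.
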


\begin{Example}
\label{intersection_of_I_and_J}

Let $R = K[x,y]/(x^6, x^4y^2, x^2y^4, xy^5, y^6)$.  The divisor poset $\P$ is drawn below.

\begin{figure}[hbt]
\begin{center}
\psset{unit=0.8cm}
\begin{pspicture}(3.,5.)(19.,13.5)

\psline (11.,13.)(10.,12.)
\psline (11.,13.)(12.,12.)
\psline (10.,12.)(11.,11.)
\psline(12.,12.)(11.,11.)
\psline(10.,12.)(9.,11.)
\psline(10.,10.)(11.,11.)
\psline(10.,10.)(9.,11.)
\psline(12.,12.)(13.,11.)
\psline(11.,11.)(12.,10.)
\psline(13.,11.)(12.,10.)
\psline(9.,11.)(8.,10.)
\psline(10.,10.)(9.,9.)
\psline(8.,10.)(9.,9.)
\psline(8.,10.)(7.,9.)
\psline(9.,9.)(8.,8.)
\psline(7.,9.)(8.,8.)
\psline(7.,9.)(6.,8.)
\psline(8.,8.)(7.,7.)
\psline(6.,8.)(7.,7.)
\psline(10.,10.)(11.,9.)
\psline(12.,10.)(11.,9.)
\psline(9.,9.)(10.,8.)
\psline(11.,9.)(10.,8.)
\psline(13.,11.)(14.,10.)
\psline(12.,10.)(13.,9.)
\psline(11.,9.)(12.,8.)
\psline(14.,10.)(13.,9.)
\psline(13.,9.)(12.,8.)
\psline(14.,10.)(15.,9.)
\psline(13.,9.)(14.,8.)
\psline(15.,9.)(14.,8.)
\psline(12.,8.)(11.,7.)
\psline(10.,8.)(11.,7.)
\psline(15.,9.)(16.,8.)

\psdot(11.,13.)
\psdot(10.,12.)
\psdot(12.,12.)
\psdot(9.,11.)
\psdot(11.,11.)
\psdot(13.,11.)
\psdot(8.,10.)
\psdot(10.,10.)
\psdot(12.,10.)
\psdot(14.,10.)
\psdot(7.,9.)
\psdot(9.,9.)
\psdot(11.,9.)
\psdot(13.,9.)
\psdot(15.,9.)
\psdot(6.,8.)
\psdot(8.,8.)
\psdot(10.,8.)
\psdot(12.,8.)
\psdot(14.,8.)
\psdot(16.,8.)
\psdot(7.,7.)
\psdot(11.,7.)

\rput(11.0,13.45){$1$}
\rput(9.71,12.28){$x$}
\rput(8.68,11.25){$x^2$}
\rput(7.7,10.25){$x^3$}
\rput(6.8,9.3){$x^4$}
\rput(5.59,7.98){$x^5$}
\rput(6.98,6.55){$x^5y$}
\rput(8.22,7.67){$x^4y$}
\rput(9.51,7.64){$x^3y^2$}
\rput(10.98,6.53){$x^3y^3$}
\rput(12.4,7.72){$x^2y^3$}
\rput(14.1,7.59){$xy^4$}
\rput(16.4,7.96){$y^5$}
\rput(15.42,9.28){$y^4$}
\rput(14.46,10.29){$y^3$}
\rput(13.45,11.25){$y^2$}
\rput(12.35,12.22){$y$}
\rput(10.95,8.33){$x^2y^2$}
\end{pspicture}
\end{center}
\caption{The divisor poset $\P$}
\label{poset of Example 4.5}
\end{figure}
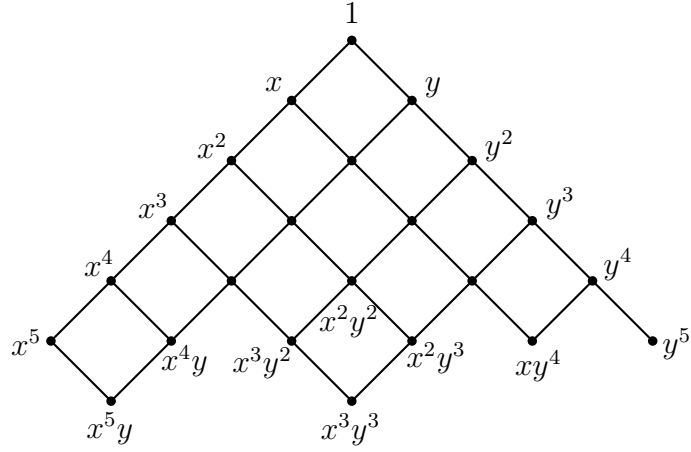

\noindent
Let $\I_1 = \{x^5, x^5y, x^4y\}, \I_2 = \{x^2y^2, x^3y^2, x^2y^3, x^3y^3\},  \I_3 = \{y^4, xy^4, y^5\}$ be poset ideals of $\P$. Let $\I = \I_1 \cup \I_2$ and $\J = \I_2 \cup \I_3$.  It then follows that $\J$ is a companion of $\I$ via the corresponding map $\J^*\to \I$ of multidegree $(5,5)$.  The poset ideal $\I \cup \J$ is symmetric.

\end{Example}



We will now address the following question: for given poset ideals $\I, \J\subseteq \P$, how can one determine whether $\I$ and $\J$ are companions? The answer is given in the next lemma:

\begin{Lemma}
\label{bipartite}
Let $\I$ and $\J$ be poset ideals with $\Gen(\I)=\{a_1,\ldots, a_{k}\}$, $\Soc(\I)=\{b_1,\ldots, b_{l}\}$, $\Gen(\J)=\{c_1,\ldots, c_{l'}\}$, $\Soc(\J)=\{d_1,\ldots, d_{k'}\}$. Then $\I$ and $\J$ are companions if and only if  $k=k'$, $l=l'$ and after a relabelling of elements of $\Gen(\J)$ and $\Soc(\J)$ we have $a_id_i=b_jc_j$ for all $1\le i\le k$ and $1\le j\le l$
\end{Lemma}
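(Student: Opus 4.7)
The plan is to prove each implication separately, the key tool being the interplay between the combinatorial condition from \Cref{dual map} that $\varphi$ preserves and reflects covering relations, and the analytic condition that $\varphi(v^*)\cdot v$ equals a fixed monomial $m$ for every $v \in \J$.

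For the forward direction, I would suppose $\I$ and $\J$ are companions via $\varphi\colon \J^* \to \I$. Because $\varphi$ preserves and reflects coverings and is a bijection between finite posets, it is a poset isomorphism, and in particular it bijects maximal elements with maximal elements and minimal with minimal. The maximal elements of $\J^*$ are $\{d_1^*, \ldots, d_{k'}^*\}$ and the minimal elements are $\{c_1^*, \ldots, c_{l'}^*\}$; matching these with $\Gen(\I)$ and $\Soc(\I)$ forces $k = k'$ and $l = l'$. Setting $m := \varphi(v^*)\cdot v$ (independent of $v$ by multigradedness) and relabelling so that $\varphi(d_i^*) = a_i$ and $\varphi(c_j^*) = b_j$, one reads off $a_i d_i = m = b_j c_j$ for every $i$ and $j$.

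For the converse, given the numerical equalities and the relations $a_i d_i = b_j c_j = m$, the natural candidate is $\varphi\colon \J^* \to \I$ defined by $\varphi(v^*) := m/v$; multigradedness is then automatic from $\varphi(v^*)\cdot v = m$. The proof reduces to checking well-definedness (i.e.\ $m/v$ is a monomial lying in $\I$) and bijectivity. For any $v \in \J$, finiteness of $\J$ together with the poset-ideal property produces a socle element $d_i \in \Soc(\J)$ with $v \mid d_i$, so $v \mid d_i \mid m$ and $m/v$ is a monomial; it can then be written as $m/v = a_i(d_i/v)$, a multiple of $a_i \in \Gen(\I)$, which puts it in $\I$ as soon as we know it lies in $\P$. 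For the latter, pick $c_j \in \Gen(\J)$ with $c_j \mid v$, write $v = c_j w$, and use $v \mid m = b_j c_j$ to obtain $w \mid b_j$; then $m/v = b_j/w$ is a divisor of $b_j \in \P$, so $m/v \in \P$ since $\P$ is closed under divisors. Injectivity of $v \mapsto m/v$ is immediate, and surjectivity follows by running the same construction with the roles of $\I$ and $\J$ exchanged.

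The step I expect to be the main obstacle is verifying that $m/v \in \P$ in the converse direction. The difficulty is that $m$ itself need not belong to $\P$ (the examples in the paper, e.g.\ \Cref{EXAMPLE_Z}, show $m$ may lie in the defining ideal), so a naive argument treating $m/v$ as a divisor of something already in $\P$ fails if one starts from $m$. The resolution is to invoke the \emph{other} half of the hypothesis, the relation $b_j c_j = m$ for a generator $c_j$ dividing $v$, which rewrites $m/v$ as a divisor of the socle element $b_j \in \P$; this simultaneous use of a generator of $\J$ and a socle element of $\I$ is what makes the argument go through.
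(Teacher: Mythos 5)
Your proposal is correct and follows essentially the same route as the paper: both directions hinge on the observation that a multigraded isomorphism must be $v^*\mapsto m/v$ for the fixed monomial $m$ with $\deg(m)=\deg(\varphi)$, and the converse is verified exactly as in the paper by sandwiching $m/v$ between a socle element $b_j$ (which gives membership in $\P$) and a generator $a_i$ (which gives membership in $\I$). The only cosmetic difference is that in the forward direction you deduce the matching of $\Gen$ with $\Soc$ from the preservation of covering relations, where the paper reads it off directly from the formula $\varphi(v^*)=m/v$; these amount to the same thing.
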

\begin{proof}
$\Rightarrow:$ Let $\varphi: \J^* \to \I$ be an isomorphism and let $m$ be the monomial with $\deg(m)=\deg(\varphi)$. Then $\varphi$ is given by $\varphi(v^*)=m/v$ for all $v\in \J$. In particular, this map defines a bijection $\Gen(\J^*)\to\Gen (\I)$. Note that
$\Gen(\J^*)=(\Soc(\J))^*=\{d^*_{1},\ldots, d^*_{k'}\}$. This implies $k=k'$ and, after a suitable relabelling, for all $1\le i\le k$ we have $\varphi(d^*_i)=m/d_i=a_i$, that is to say, $a_id_i=m$. A similar argument can be applied to the socles to conclude $l=l'$ and $b_jc_j=m$ for all $1\le j\le l$.

$\Leftarrow:$ Let $k=k'$, $l=l'$ and let $a_id_i=b_jc_j=m$ for $1\le i\le k$ and $1\le j\le l$. We will show that the map $\varphi$ with $\deg(\varphi)=\deg(m)$ defines an isomorphism $\J^*\to \I$. We have $v^*\in \J^*$ if and only if there exist a socle of $c^*_j$ of $\J^*$ and a generator $d^*_i$ of $\J^*$ such that $c^*_j\preceq v^*\preceq d^*_i$, or, in other words, $c_j|v$ and $v|d_i$. Then $v|m$ and
$$(m/d_i)|(m/v) \text{ and } (m/v)|(m/c_j)\Leftrightarrow a_i|(m/v) \text{ and } (m/v)|b_j\Leftrightarrow b_j \preceq m/v\preceq a_i,$$
which is equivalent to saying that $m/v\in \I$. This proves that the map is well defined on all of $\J^*$. Clearly, $\varphi$ is injective. For surjectivity, one applies the argument above in the opposite direction: if $u\in \I$, then $u|m$ and $v^*:=(m/u)^*\in \J^*$ with $\varphi(v^*)=m/v=u$.
\end{proof}
\begin{Corollary}
\label{symmetric}
Let $\I$ be a poset ideal with $\Gen(\I)=\{a_1,\ldots, a_k\}$ and $\Soc(\I)=\{b_1,\ldots, b_l\}$. Then $\I$ is symmetric if and only if $k=l$ and after a relabelling of elements of $\Soc(\I)$ we have $a_ib_i=a_jb_j$ for all $1\le i,j\le k$.
\end{Corollary}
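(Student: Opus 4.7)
The plan is to obtain this statement as an immediate specialization of \Cref{bipartite} to the case $\J = \I$. Recall that $\I$ is symmetric exactly when $\I$ is a companion of itself, so the corollary is precisely \Cref{bipartite} with $\J = \I$, where $\Gen(\J) = \Gen(\I) = \{a_1,\ldots,a_k\}$ and $\Soc(\J) = \Soc(\I) = \{b_1,\ldots,b_l\}$. The cardinality conditions $k = k'$ and $l = l'$ of the lemma both collapse to the single statement $k = l$.

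The only step that needs a small decoding is the relabeling condition. In the forward direction, \Cref{bipartite} supplies permutations $\sigma, \pi$ of $\{1,\ldots,k\}$ and a monomial $m$ with $a_i b_{\sigma(i)} = b_j a_{\pi(j)} = m$ for all $i, j$. Relabel $\Soc(\I)$ by $\sigma$, i.e.\ rename $b_{\sigma(i)}$ as $b_i$; the first equality becomes $a_i b_i = m$ for every $i$, which is exactly the claimed condition $a_i b_i = a_j b_j$. The second equality $b_j a_{\pi(j)} = m$ is then automatic: substituting $b_j = m/a_j$ from the first equality forces $a_{\pi(j)} = a_j$, so no further relabeling of $\Gen(\I)$ is required.

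For the converse, suppose $k = l$ and, after relabeling $\Soc(\I)$, we have $a_i b_i = m$ for all $i$. Apply \Cref{bipartite} with $c_j = a_j$ and $d_i = b_i$ (no nontrivial permutations): then $a_i d_i = a_i b_i = m$ and $b_j c_j = b_j a_j = m$, so $\I$ is a companion of itself, hence symmetric.

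There is no serious obstacle; the only subtle point is recognizing that, although \Cref{bipartite} formally allows independent relabelings of both $\Gen(\J)$ and $\Soc(\J)$, once $\J = \I$ the relabeling of the generators is determined by the relabeling of the socles through the relation $a \cdot b = m$, so a single relabeling of $\Soc(\I)$ suffices to phrase the criterion.
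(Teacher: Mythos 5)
Your proposal is correct and follows exactly the route the paper intends: the corollary is stated without proof as the specialization of \Cref{bipartite} to $\J=\I$, and your careful observation that the relabelling of $\Gen(\I)$ is forced to be trivial once $\Soc(\I)$ is relabelled (since $a_{\pi(j)}=m/b_j=a_j$ and the generators are distinct) correctly fills in the only detail the paper leaves implicit.
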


\begin{Remark}
Assume that the equivalent conditions in \Cref{bipartite} hold. Then there exists a {\em unique} $m$ such that after a suitable relabelling, $a_id_i=b_jc_j=m$ for all $1\le i\le k$ and $1\le j\le l$. Indeed, having such a relabelling in particular implies $a_1\cdots a_kd_1\cdots d_k=m^k$ and thus $m$, given that it exists, can be uniquely determined. Moreover, there is a {\em unique} isomorphism $\varphi: \J^*\to \I$, namely, the one given by $\varphi(v^*)=m/v$.

\end{Remark}

\begin{Example}
\label{TRACE_NOT_CONNECTED_TETER_TYPE}


Let $I=(x^4,y^4,x^2y^2)$, $J\subseteq I$ such that $G=K[x,y]/J$ is a $0$-dimensional local $K$-algebra. Let  $\epsilon: G\to R=K[x,y]/I$ be the canonical epimorphism.
We claim that there is no $k$ such that $\Ker(\epsilon)=0:\mm_G^k$, where $\mm_G$ is the maximal ideal of $G$. Indeed, suppose $\Ker(\epsilon)=0:\mm_G^k$. Then $I=J:\mm^k$, where $\mm=(x,y)$. Then $(x^4)\mm^k\subseteq J$ and $(x^2y^2)\mm^k\subseteq J$. We will show that $(x^3y)\mm^k\subseteq J$, giving a contradiction since $x^3y\not\in I$. Take $x^{k_1}y^{k-k_1}\in \mm^k$. Then:
\begin{itemize}
\item if $k_1\ge 1$, then $x^3y\cdot x^{k_1}y^{k-k_1}=x^4(x^{k_1-1}y^{k-k_1+1})\in (x^4)\mm^k\subseteq J$;
\item if $k_1=0$, then $x^3y\cdot y^{k}=x^2y^2(xy^{k-1})\in (x^4)\mm^k\subseteq J$.
\end{itemize}
On the other hand, we will prove that $R$ is of Teter type.

\noindent
Let $\I$ denote the poset ideal consisting of
\[
x^2, \, x^3, \, x^2y, \, x^3y, \, y^2, \, y^3, \, xy^2, \, xy^3.
\]
From \Cref{tracemaci} we know that $\tr(\P^*) = \I$.

We have $\Gen(\I)=\{x^2,y^2\}$ and $\Soc(\I)=\{x^3y,xy^3\}$. We have $x^2\cdot y^2\cdot x^3y\cdot xy^3=x^6y^6$ and thus the isomorphism $\I^* \to \I$, if it exists, should have multidegree $(3,3)$. After a relabelling of $\Soc \I$ we indeed have $x^2\cdot xy^3=y^2\cdot x^3y=x^3y^3$. Hence $\I$ is symmetric. Therefore, $\P$ is of Teter type.

\end{Example}

We now turn to the study of $0$-dimensional monomial $K$-algebras of Teter type.  Let $S = K[x_1, \ldots, x_n]$ denote the polynomial ring and
\[
R = S/(x_1^{a_1+1}, x_2^{a_2+1}, \ldots, x_n^{a_n+1}),
\]
where $1 \leq a_1 \leq a_2 \leq \cdots \leq a_n$, and $\nn$ is the graded maximal ideal of $R$.

\begin{Theorem}
\label{Chopin}
Let $1 \leq k \leq a_1$.  Then the divisor poset $\P$ of $R/(0:\nn^k)$ is of Teter type.  Furthermore, the trace of the divisor poset $\P^*$ is $\I_k$, where $\I_k$ is the poset ideal of $\P$ whose $\Gen(\I_k)$ consists of those monomials $x_1^{b_1}x_2^{b_2}\ldots x_n^{b_n}$ with each $0 \leq b_i \leq a_i$ and with $\sum_{i=1}^n b_i = k$.
\end{Theorem}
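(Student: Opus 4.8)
The plan is to describe $\bar R := R/(0:\nn^k)$ explicitly, exhibit $\I_k$ as a symmetric $\tau$-ideal using \Cref{natural}, and then bound $\tr(\P^*)$ from above via \Cref{unionsymmetric}. Throughout I assume $n\ge 2$, which is what makes $\bar R$ non-Gorenstein.

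First I would determine $0:\nn^k$. Write $a=a_1+\cdots+a_n$ for the socle degree of the complete intersection $R$. A basis monomial $x_1^{c_1}\cdots x_n^{c_n}$ (so $0\le c_i\le a_i$) is annihilated by every monomial of degree $k$ precisely when one cannot add a degree-$k$ monomial while staying in the box, i.e. when the total slack $\sum_i(a_i-c_i)=a-\sum_i c_i$ is smaller than $k$. Hence $0:\nn^k=\nn^{a-k+1}$, so $\bar R=R/\nn^{a-k+1}$ has divisor poset $\P=\{x_1^{c_1}\cdots x_n^{c_n}:0\le c_i\le a_i,\ \sum_i c_i\le a-k\}$, and its socle $\Soc(\bar R)$ is the set of monomials of degree exactly $a-k$.

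For the inclusion $\I_k\subseteq\tr(\P^*)$ I would invoke \Cref{natural} with $G=R$ and $J=0:\nn^k=\nn^{a-k+1}$. Since $R$ is Gorenstein, $0:J=\nn^{k}$, so the natural $\tau$-ideal is $I=((0:J)+J)/J$, which is the image of $\nn^{k}$ because $\nn^{a-k+1}\subseteq\nn^{k}$. The hypothesis $J\subseteq 0:J$, that is $\nn^{a-k+1}\subseteq\nn^{k}$, amounts to $a\ge 2k-1$, and $k\le a_1$ with $n\ge 2$ gives $a\ge 2a_1\ge 2k$; thus \Cref{natural} shows that $I$ is symmetric. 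Its minimal generators are exactly the monomials of degree $k$ (each lies in the box since $k\le a_1\le a_i$), so $I=\I_k$ and $\I_k\subseteq\tr(\P^*)$. Concretely, the witnessing isomorphism $\I_k^*\to\I_k$ is $v^*\mapsto (x_1^{a_1}\cdots x_n^{a_n})/v$, of multidegree $(a_1,\dots,a_n)$.

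The reverse inclusion $\tr(\P^*)\subseteq\I_k$ is the heart of the matter. By \Cref{unionsymmetric} it suffices to prove that every symmetric poset ideal $\I$ of $\P$ is contained in $\nn^{k}$, i.e. that all of its minimal generators have degree $\ge k$. Here I would first use the elementary observation that, for any poset ideal $\I$, its minimal elements are precisely those annihilated by $\nn$, so $\Soc(\I)=\I\cap\Soc(\bar R)$; combined with the first paragraph this places every socle element of $\I$ in degree $a-k$. Feeding this into \Cref{symmetric}, the relation $a_ib_i=m$ forces all minimal generators of $\I$ into a single degree $d$ with $\deg m=d+(a-k)$, and dividing a generator up to a socle multiple gives $d\le a-k$. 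It then remains to show $d\ge k$, equivalently $\deg m\ge a$.

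I expect this last inequality to be the main obstacle. The approach I would take is coordinatewise: the forced bijection $a_i\mapsto b_i=m/a_i$ together with $b_i\in\bar R$ gives $(a_i)_\ell\ge m_\ell-a_\ell$ for every coordinate $\ell$, and symmetrically $(b_i)_\ell\ge m_\ell-a_\ell$, so every element of $\I$ is divisible by $\prod_\ell x_\ell^{\max(0,\,m_\ell-a_\ell)}$. Pushing a generator up one coordinate at a time toward a socle element of degree $a-k$ should exhibit, for each $\ell$, a socle element saturated in the $\ell$-th coordinate, whence $m_\ell\ge a_\ell$ and summing yields $\deg m\ge a$. Making this saturation argument uniform — in particular handling the boundary cases where raising a single coordinate would overshoot the truncation degree $a-k$ — is the delicate point, and is in line with the authors' comment that trace computations of this kind tend to rely on genuinely algebraic input. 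Once $\tr(\P^*)=\I_k$ is established, symmetry of $\I_k$ together with \Cref{unionsymmetric} gives that $\P$ is of Teter type.
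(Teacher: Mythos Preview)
Your forward inclusion $\I_k\subseteq\tr(\P^*)$ is fine; the use of \Cref{natural} with $G=R$ and $J=\nn^{a-k+1}$ is a clean way to see that $\I_k$ is symmetric, and it matches the paper's direct verification via \Cref{symmetric}.

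The gap is in the reverse inclusion. Your plan is to show, for a symmetric ideal $\I$ with generators in a common degree $d$, that the multidegree $m$ of the self-duality satisfies $m_\ell\ge a_\ell$ for every $\ell$, by ``pushing'' a generator along the $\ell$-th coordinate until the socle is reached with that coordinate saturated. This does not work: multiplying $g\in\Gen(\I)$ by powers of $x_\ell$ can hit the total-degree ceiling $a-k$ before the box wall $a_\ell$, producing a socle element with $\ell$-th coordinate strictly below $a_\ell$. For instance with $n=2$, $a_1=a_2=k$, $d=k-1$, and $g=x_2^{\,d}$, pushing along $x_1$ gives only $x_1x_2^{\,k-1}$, whose $x_1$-exponent is $1$, not $a_1$. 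Iterating the process by passing to the companion generator $m/s$ and pushing again just cycles without gaining ground. You flag exactly this obstruction (``is the delicate point'') but do not resolve it, so the proof is incomplete as written.

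The paper's argument for this step is genuinely different and avoids the coordinatewise claim altogether. It works with a $\tau$-ideal $\I$ and its companion $\J$, uses the hypothesis $d<a_1$ to see that every generator of $\I$ has exactly $n$ lower neighbours, and transports this through the multigraded isomorphism to conclude that every element of $\Soc(\J)$ has all coordinates strictly positive. A lex-order argument then pins down the lex-largest element of $\Soc(\J)$ as $x_1^{a_1-k}x_2^{a_2}\cdots x_n^{a_n}$, and a swap process (replace $x_j$ by $x_1$, one step at a time, staying in $\Soc(\J)$) shows $\Soc(\J)=\Soc(\I_k)$. The contradiction is then a count: $|\Gen(\I)|=|\Soc(\J)|=\binom{k+n-1}{n-1}$, while $\Gen(\I)$ lives among the degree-$d$ monomials, of which there are only $\binom{d+n-1}{n-1}<\binom{k+n-1}{n-1}$. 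If you want to repair your argument, the missing ingredient is precisely this neighbour-count plus lex-max identification on the companion side; the coordinatewise inequality $m_\ell\ge a_\ell$ is neither needed nor easily available.
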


\begin{proof}
One claims that the ideal $\I_k \subseteq \P$ is symmetric.  Since $\Soc(\I_k)$ consists of those monomials $x_1^{a_1 - b_1}x_2^{a_2 - b_2}\ldots x_n^{a_n - b_n}$ with $\sum_{i=1}^n b_i = k$, it follows from \Cref{symmetric} that $\I_k \subseteq \P$ is symmetric.  Hence $\I_k \subseteq \tr(\P^*)$.

Now, it is shown that every $\tau$-ideal of $\P$ is contained in $\I_k$.  Let $\I$ be a $\tau$-ideal of $\P$ with $\I \nsubseteq \I_k$.  Let $u$ be a monomial of $\Gen(\I)$ of the smallest degree.  Then $d = \deg u < a_1$.  Let $\J$ be a companion of $\I$.  Since each monomial belonging to $\Soc(\J)$ is of degree $\sum_{i=1}^{n} a_i - k$, it follows that each monomial of $\Gen(\I)$ is of degree $d$.  Since $d < a_1$, each monomial of $\Gen(\I)$ possesses exactly $n$ lower neighbors.  It then follows that each monomial belonging to $\Soc(\J)$ possesses exactly $n$ upper neighbors in $\J$.  In other words, if $u = x_1^{c_1}x_2^{c_2}\ldots x_n^{c_n} \in \Soc(\J)$, then $c_i > 0$ for all $i$ and $u/x_i \in \J$.  Let $<_{\rm lex}$ denote the lexicographic order induced by $x_1 < x_2 < \cdots < x_n$.  Let $w = x_1^{c_1}x_2^{c_2}\ldots x_n^{c_n}$ be the monomial belonging to $\Soc(\J)$ which is the largest with respect to $<_{\rm lex}$.  Let $1 \leq i_0 \leq n$ be the largest integer for which $c_{i_0} < a_{i_0}$.  If $i_0 > 1$ and $w_0 = x_{i_0}(w/x_{i_0-1})$, then $w <_{\rm lex} w_0$ and $w_0 \in \J$, a contradiction.  Hence $i_0 = 1$ and $w = x_1^{a_1-k}x_2^{a_2}\ldots x_n^{a_n}$.  Then $x_1^{a_1 - k + \sum_{i=2}^{n} e_i}(w/x_2^{e_2} \cdots x_n^{e_n}) \in \J$, where $\sum_{i=2}^{n} e_i \leq k$.  Hence $\Soc(\J) = \Soc(\I_k)$.  However, since $|\Gen(\J)| < |\Soc(\I_k)|$, it follows that $\J$ cannot be a companion of $\I$, as desired.
\end{proof}

One can conjecture that, if an integer $k \geq 1$ satisfies
\[
|\Gen(\I_1)| < |\Gen(\I_2)| < \cdots < |\Gen(\I_{k})|,
\]
then $\P$ is of Teter type and the trace of $\P^*$ is $\I_k$.

\begin{Example}
\label{k=a+1}
{\rm
Let $n \geq 3$ and $a_1 = \cdots =a_n = a$.  Let $k = a + 1$.  In the divisor poset $\P$ of $R/(0:\nn^k)$, the poset ideal $\I_k$ is symmetric.  Hence $\I_k \subset \tr(\P^*)$.  One claims every $\tau$-ideal of $\P$ is contained in $\I_k$.  Let $\I$ be a $\tau$-ideal of $\P$ with $\I \nsubseteq \I_k$ and $\J$ a companion of $\I$.  Let $u$ be a monomial of $\Gen(\I)$ of the smallest degree.  Then $d = \deg u \leq a$.  When $d < a$, the proof of \Cref{Chopin} can be available.  Let $d = a$.  Since each monomial belonging to $\Soc(\J)$ is of degree $(n - 1)a - 1$, it follows that each monomial of $\Gen(\I)$ is of degree $a$.  Thus in particular, except for the monomials $x_1^a, \ldots, x_n^a$, each monomial belonging to $\Gen(\I)$ possesses exactly $n$ lower neighbors.  Let $<_{\rm lex}$ denote the lexicographic order induced by $x_1 < x_2 < \cdots < x_n$.  Let $w = x_1^{c_1}x_2^{c_2}\ldots x_n^{c_n}$ be the monomial belonging to $\Soc(\J)$ which is the largest with respect to $<_{\rm lex}$.  Let $c_n < a$.  Since $w$ possesses at least $n - 1$ upper neighbors, there is $1 \leq i < n$ for which $x_i$ divides $w$.  Since $\J$ is a poset ideal, one has $w' = x_n(w/x_i) \in \J$.  However, $w <_{\rm lex} w'$, a contradiction.  Thus $c_n = a$.  Similarly, one has $c_3 = \cdots = c_n = a$.  Let $w = x_1^{c_1} x_2^{c_2} x_3^a \cdots x_n^a$.  If $w/x_1 \in \J$, then $w'' = x_2(w/x_1) \in \J$ and $w <_{\rm lex} w''$, a contradiction.  Hence the monomial $w_0$ of $\I$ which is accompanied by $w \in \J$ is $x_1^a$.  Since $c_1 + c_2 = a - 1$, the lower neighbors of $w/x_n$ are $x_1(w/x_n), x_2(w/x_n)$ and $w$.  However, the upper neighbors of $x_n w_0 = x_1^a x_n$ are $x_1^a$ and $x_1^{a-1}x_n$, a contradiction.  Hence $\I$ cannot be a $\tau$-ideal, as desired.  Thus $\I_k = \tr(\P^*)$.
}
\end{Example}

\begin{Theorem}
\label{Mozart}
Let $a_1 = a_2 = \cdots = a_n = 1$ and $1 \leq k \leq n - 1$.  Let $\P$ denote the divisor poset of $R/(0:\nn^k)$.  Then the trace of $\P^*$ is $\I_{k_0}$, where $k_0 = \min\{k, n - k\}$ and where $\I_{k_0}$ is the poset ideal of $\P$ whose $\Gen(\I_{k_0})$ consists of those monomials $x_1^{b_1}x_2^{b_2}\ldots x_n^{b_n}$ with each $0 \leq b_i \leq 1$ and with $\sum_{i=1}^n b_i = k_0$.  Furthermore, $\P$ is of Teter type if and only if $k \leq n - k$.
\end{Theorem}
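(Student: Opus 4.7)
The first step is to identify the ring: in $R = S/(x_1^2, \ldots, x_n^2)$, a squarefree monomial $u$ of degree $d$ annihilates $\nn^k$ iff $\supp(u)$ meets every $k$-subset of $[n]$, iff $d \geq n-k+1$. Thus $0:\nn^k = \nn^{n-k+1}$, so $R/(0:\nn^k) = R/\nn^{n-k+1}$ and the divisor poset $\P$ is the set of squarefree monomials of $S$ of degree at most $n-k$. In particular, the top-degree elements of $\P$ are exactly the squarefree monomials of degree $n-k$, and $\I_{k_0}$ consists of all squarefree monomials in $\P$ of degree between $k_0$ and $n-k$.

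The plan has three parts: (i) show $\I_{k_0} \subseteq \tr(\P^*)$; (ii) show the reverse inclusion; (iii) read off the Teter-type criterion via \Cref{unionsymmetric}. For (i), when $k \leq n-k$ so that $k_0 = k$, the assignment $u \mapsto x_1\cdots x_n / u$ is a bijection $\Gen(\I_k) \to \Soc(\I_k)$ with constant product $x_1\cdots x_n$, so by \Cref{symmetric} $\I_k$ is symmetric, hence a $\tau$-ideal. When $k > n-k$ so that $k_0 = n-k$, $\I_{n-k}$ is the antichain of top-degree monomials; each singleton $\{u\}$ is trivially symmetric via multidegree $m = u^2$ (since $u^2/u = u \in \P$), so every degree-$(n-k)$ squarefree monomial lies in the trace.

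For (ii), the heart of the argument, I will show every $\tau$-ideal $\I \subseteq \P$ has generators of common degree $d \geq k_0$. Given a companion $\J$ and an isomorphism $\varphi(v^*) = m/v$ of multidegree $m$, \Cref{bipartite} provides $\Soc(\J) = \{m/a_i : a_i \in \Gen(\I)\}$ and $\Gen(\J) = \{m/b_j : b_j \in \Soc(\I)\}$. Since $\P$ has top degree $n-k$, every element of $\Soc(\J)$ has degree $n-k$, forcing all $a_i$ to share a common degree $d$ with $\deg m = n-k+d$. Writing $A = \bigcap_i \supp(a_i)$ and $B = \supp(m)$, the squarefreeness of every $m/a_i$ and $m/b_j$, together with divisibility of $m$ by each generator and each socle element, pins the exponents of $m$ to $2$ on $A$, $1$ on $B\setminus A$, and $0$ elsewhere; this gives $|A|+|B| = n-k+d$. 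The main obstacle is a support-extension step: if $d < n-k$ and some $t_0 \in [n]\setminus B$ existed, then taking any $c \in \Gen(\J)$ (degree $d$, support in $B$) and extending by $x_{t_0}$ and additional variables to a squarefree monomial $u$ of degree $n-k$ produces $u \in \P$ with $c \mid u$; thus $u$ lies in the poset ideal generated by $\Gen(\J)$, so $u \in \Soc(\J)$, yet $\supp u \not\subseteq B$ precludes $u = m/a_i$ for any $i$, a contradiction. Hence $d < n-k$ forces $B = [n]$, and then $|A| = d - k \geq 0$ gives $d \geq k$. In either case $d \geq \min(k,n-k) = k_0$, so $\I \subseteq \I_{k_0}$.

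For (iii), \Cref{unionsymmetric} tells us that $\P$ is of Teter type iff $\tr(\P^*) = \I_{k_0}$ is itself symmetric. For $k \leq n-k$, symmetry was already shown in (i). For $k > n-k$, were $\I_{n-k}$ symmetric, its generators would form the antichain of all degree-$(n-k)$ squarefrees, giving $A = \emptyset$ and $B = [n]$, so the relation $|A|+|B| = n-k+d = 2(n-k)$ would force $n = 2(n-k)$ and hence $k = n/2$, contradicting $k > n/2$. Thus $\P$ fails to be of Teter type whenever $k > n-k$, completing the proof.
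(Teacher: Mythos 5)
Your proof is correct in substance, but it reaches the key inclusion $\tr(\P^*)\subseteq \I_{k_0}$ by a genuinely different route than the paper. The paper argues via covering relations: if $\I$ is a $\tau$-ideal with a generator of degree $d<k_0$, then every generator of $\I$ has degree $d$ (because every element of $\Soc(\J)$ has top degree $n-k$) and hence has exactly $n-d$ lower neighbors; since the isomorphism $\J^*\to\I$ reflects covers, each element of $\Soc(\J)$ must have $n-d$ upper neighbors inside $\J$, whereas a squarefree monomial of degree $n-k$ has only $n-k$ upper neighbors in all of $\P$, forcing $n-k\geq n-d$, i.e.\ $k\leq d<k_0$, a contradiction. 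You instead analyze the multidegree $m$ of the isomorphism directly: you compute $\deg m$ once as $(n-k)+d$ and once as $|\{j:\deg_{x_j}m=2\}|+|\supp(m)|$, and you force $\supp(m)=[n]$ by your support-extension argument, so that $d-k=|\{j:\deg_{x_j}m=2\}|\geq 0$. Both are short; the paper's neighbor count is slightly more local (it never needs $\supp(m)=[n]$), while your computation has the advantage of reusing the same bookkeeping for the final non-Teter-type case $k>n-k$, where the paper runs an essentially parallel argument ($\sum_i c_i=2(n-k)<n$ yet $c_i\geq 1$ for all $i$, with $c=\deg m$).

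Two small points to tighten. First, your identity $|A|+|B|=n-k+d$ with $A=\bigcap_i\supp(a_i)$ is asserted, but what follows immediately from $m=a_i\cdot(m/a_i)$ being a product of two squarefree monomials is only that the exponent-two locus of $m$ is \emph{contained} in $A$; the reverse containment needs the extra observation that if $\deg_{x_j}m=1$ and $x_j$ divides every $a_i$, then no element of $\J$ is divisible by $x_j$, which contradicts $\J$ being downward closed once $d<n-k$. Your argument survives regardless, because the counting only requires $\deg m=|A_2|+|B|$ and $|A_2|\geq 0$ for $A_2$ the exponent-two locus. Second, in part (iii) you invoke $B=[n]$ for $d=n-k$, where the support-extension step does not apply; there it follows instead because $m$ is divisible by every squarefree monomial of degree $n-k$... rather, by each generator, and the union of the supports of all degree-$(n-k)$ squarefree monomials is $[n]$. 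With these justifications inserted, the proof is complete.
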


\begin{proof}
If $k \leq n - k$, then $\I_{k_0} \subseteq \P$ is symmetric.  In fact, since $\Gen(\I_{k_0})$ consists of squarefree monomials in $x_1, \ldots, x_n$ of degree $k_0$ and $\Soc(\I_{k_0})$ does of those of degree $n - k_0$,  \Cref{symmetric} says that $\I_{k_0}$ is symmetric.  If $k > n - k$, then $\Gen(\I_{k_0}) = \Soc(\I_{k_0})$.  Thus, for each $1 \leq k \leq n - 1$, one has $\I_{k_0} \subseteq \tr(\P^*)$.

Let $\I$ be a $\tau$-ideal of $\P$ with $\I \nsubseteq \I_{k_0}$.  Let $u$ be a monomial of $\Gen(\I)$ of the smallest degree.  Then $d = \deg u < k_0$.  Let $\J$ be a companion of $\I$.  Since each monomial belonging to $\Soc(\J)$ is of degree $n - k$, it follows that each monomial of $\Gen(\I)$ is of degree $d$.  Hence each monomial of $\Gen(\I)$ possesses exactly $n - d$ lower neighbors.  It then follows that each monomial belonging to $\Soc(\J)$ possesses exactly $n - d$ upper neighbors in $\J$.  On the other hand, each monomial belonging to $\Soc(\I_{k_0})$ possesses exactly $n - k$ upper neighbors.  Hence $n - k \geq n - d$.  Thus $k \leq d < k_0$, which contradicts $k_0 = \min\{k, n - k\}$.  Hence every $\tau$-ideal is contained in $\I_{k_0}$.  Thus $\tr({\P^*}) = \I_{k_0}$, as required.  In particular, $\P$ is of Teter type if $k \leq n - k$.

Now, suppose that $k > n - k$.  Then $\tr(\P^*) = \I_{n-k}$, where  $\Gen(\I_{n-k})$ consists of squarefree monomials in $x_1, \dots, x_n$ of degree $n - k$.  One has $\Gen(\I_{n-k}) = \Soc(\I_{n-k})$.  If $\P$ is of Teter type, then one can find a bijection $\delta : \Gen(\I_{n-k}) \to \Gen(\I_{n-k})$ for which there is $c = (c_1, \ldots, c_n)\in \ZZ^n$ with $\deg(u) +  \deg(\delta(u)) = c$ for all $u \in \Gen(\I_{n-k})$.  In particular, $c_i \geq 1$ for all $i$.  However, since $\sum_{i=1}^{n} c_i = 2(n - k) < n$, it follows that $c_j = 0$ for some $1 \leq j \leq n$.  This contradiction shows that $\P$ cannot be of Teter type.
\end{proof}

\section{Monomial almost complete intersections of Teter type}

In this section we describe the trace of $\omega_R$ for an almost complete intersection monomial algebra $R$ and determine when such rings are  of Teter type.
\begin{Theorem}
\label{tracemaci}
Let $S=K[x_1,\ldots,x_n]$ be the polynomial ring over the field $K$ and $I=(x_1^{a_1},\ldots,x_n^{a_n},x_1^{b_1}\cdots x_n^{b_n})\subseteq S$ such that $b_i<a_i$ for all $i$ and $b_i>0$ for at least two integers $i$. Let $R=S/I$. Then
\[\tr(\omega_R)=(x_i^{a_i-b_i}:\ b_i>0)+(w/x_i^{b_i}:\ b_i>0),\]
where $w=x_1^{b_1}\cdots x_n^{b_n}$.
\end{Theorem}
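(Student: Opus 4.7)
The plan is to prove the two inclusions separately. Set $J_0=(x_1^{a_1},\ldots,x_n^{a_n})$ and $G=S/J_0$, so that $R=G/(\bar w)$. Since $G$ is a zero-dimensional Gorenstein ring, $\omega_R\iso\Hom_G(R,G)\iso(J_0:w)/J_0$, and the monomial colon formula $J_0:w=(x_1^{a_1-b_1},\ldots,x_n^{a_n-b_n})$ shows that $\omega_R$ is minimally generated over $R$ by the classes of $x_i^{a_i-b_i}$ for $i\in T:=\{i:b_i>0\}$; for $i\in T'=[n]\setminus T$, the candidate $x_i^{a_i-b_i}=x_i^{a_i}$ already lies in $J_0$ and therefore vanishes.

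For the forward inclusion I would exhibit two $R$-module homomorphisms $\omega_R\to R$. The first is the natural one from \Cref{natural}, namely $x_i^{a_i-b_i}\mapsto x_i^{a_i-b_i}$, with image $(x_i^{a_i-b_i}:i\in T)$. For the second, set $\varphi(x_i^{a_i-b_i})=w/x_i^{b_i}$; well-definedness on the Koszul relations among the generators of $(J_0:w)/J_0$ reduces to verifying
\[
x_j^{a_j-b_j}\cdot\frac{w}{x_i^{b_i}}-x_i^{a_i-b_i}\cdot\frac{w}{x_j^{b_j}}=(x_j^{a_j}-x_i^{a_i})\cdot\frac{w}{x_i^{b_i}x_j^{b_j}}=0\text{ in }R,
\]
whence $\varphi(\omega_R)=(w/x_i^{b_i}:i\in T)$. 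Summing the two images gives the inclusion $\supseteq$.

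For the reverse inclusion I would apply \Cref{vasconcelos} to a length-$n$ free $S$-resolution of $R$ built from the mapping cone $\DD$ of the chain map $\psi\:\GG\to\FF$ lifting multiplication by $w$, where $\FF$ and $\GG$ are the Koszul complexes on $x_1^{a_1},\ldots,x_n^{a_n}$ and $x_1^{a_1-b_1},\ldots,x_n^{a_n-b_n}$ respectively. Explicitly,
\[
\psi_k(e_{i_1}\wedge\cdots\wedge e_{i_k})=\Bigl(\prod_{j\notin\{i_1,\ldots,i_k\}}x_j^{b_j}\Bigr)f_{i_1}\wedge\cdots\wedge f_{i_k}.
\]
The cone has length $n+1$, but the top summand $E_0=f_{1\cdots n}\in F_n\subset D_n$ cancels against $E_{\mathrm{top}}=e_{1\cdots n}\in G_n=D_{n+1}$ via the basis change $\tilde E_0:=d_\DD(E_{\mathrm{top}})=E_0+\sum_l(-1)^l x_l^{a_l-b_l}E_l$, for which $d_\DD(\tilde E_0)=d^2(E_{\mathrm{top}})=0$, producing a split acyclic subcomplex to quotient. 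For each $i\in T'$, since $\psi_{n-1}(e_{\hat i})=x_i^{b_i}f_{\hat i}=f_{\hat i}$ has a unit coefficient, a further cancellation removes $E_i$ and $f_{\hat i}$. The resulting resolution of length $n$ has $D_n^{\min}$ with basis $\{E_i:i\in T\}$, and the $F_{n-1}$-component of its top differential sends $E_i\mapsto x_i^{b_i}f_{\hat i}$. Any kernel vector $(c_i)_{i\in T}$ of $D_n^{\min}\tensor_S R\to D_{n-1}^{\min}\tensor_S R$ therefore satisfies $c_ix_i^{b_i}=0$ in $R$, so the monomial colon computation forces $c_i\in(x_i^{a_i-b_i},w/x_i^{b_i})$. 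By \Cref{vasconcelos}, the trace $\tr(\omega_R)$ is generated by such entries, whence $\tr(\omega_R)\subseteq(x_i^{a_i-b_i}:i\in T)+(w/x_i^{b_i}:i\in T)$.

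The main obstacle is the careful cancellation bookkeeping needed to convert $\DD$ into a resolution of length exactly $n$ (the projective dimension of $R$), which is what \Cref{vasconcelos} requires; once minimization is in place, the trace computation follows immediately from the diagonal shape of the $F_{n-1}$-component of the top differential.
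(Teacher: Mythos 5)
Your proof is correct and follows essentially the same route as the paper: the reverse inclusion is obtained exactly as there, by truncating the mapping cone of the Koszul--complex lift of multiplication by $w$ to a length-$n$ free resolution and reading off the condition $c_i x_i^{b_i}=0$ in $R$ from the top differential tensored with $R$ (the paper keeps the summands indexed by $i\notin T$ instead of cancelling them, but those entries land in $I:x_i^{0}=I$ and contribute nothing, so the two versions agree). The only point to tighten is in the forward inclusion: well-definedness of $\varphi$ on $(J_0:w)/J_0$ requires checking not just the Koszul syzygies of $x_1^{a_1-b_1},\ldots,x_n^{a_n-b_n}$ but also the relations $x_i^{b_i}\cdot[x_i^{a_i-b_i}]=0$ coming from $J_0$; these hold since $x_i^{b_i}\cdot(w/x_i^{b_i})=w=0$ in $R$, and the paper avoids the issue by exhibiting the corresponding column vectors directly in the kernel $C$.
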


\begin{proof}
Let $J=(x_1^{a_1},\ldots,x_n^{a_n})$, $L=J:w=(x_1^{a_1-b_1},\ldots,x_n^{a_n-b_n})$ and $\bb=(b_1,\ldots,b_n)$. Then we have the multigraded short exact sequence
\[
0\to (S/L)(-\bb)\stackrel{\varphi}{\To}S/J\to S/I\to 0,
\]
where the map $\varphi$ is multiplication by $w$. Let $\FF=K(x_1^{a_1},\ldots,x_n^{a_n};S)$ be the Koszul complex attached to the regular sequence $x_1^{a_1},\ldots,x_n^{a_n}$ which is indeed a minimal graded free $S$-resolution of $S/J$ and let $\GG=K(x_1^{a_1-b_1},\ldots,x_n^{a_n-b_n};S)$ which is a minimal graded free $S$-resolution of $S/L$. Then the map $\varphi$ can be lifted to a multigraded complex homomorphism $\{\varphi_i:\GG(-\bb)\to \FF\}$, where $\varphi_i(e_{j_1\cdots j_i})=(w/x_{j_1}^{b_{j_1}}\cdots x_{j_i}^{b_{j_i}})e'_{j_1 \cdots j_i}$ for any basis element $e_{j_1\cdots j_i}$ of $G_i(-\bb)$. So we have the commutative diagram

\bigskip
$\begin{CD}
\GG(-\bb):\ \ 0 @>>> G_n(-\bb) @>\partial_n>> G_{n-1}(-\bb) @>\partial_{n-1}>> G_{n-2}(-\bb) @>\partial_{n-2}>> \cdots \\
@. @V\cong V\varphi_n V @VV\varphi_{n-1}V  @VV\varphi_{n-2}V  \\
\FF:\ \ \ \ \ \ \ \ \ 0 @>>> F_n @>\partial'_n>> F_{n-1}@>\partial'_{n-1}>> F_{n-2}@>\partial'_{n-2}>> \cdots
\end{CD}
$

\bigskip

Let $\DD$ be the mapping cone of $\GG(-\bb)\to \FF$. Then $\DD$ is a graded free $S$-resolution of $R$ with the differential maps $d_i:F_i\dirsum G_{i-1}(-\bb)\to F_{i-1}\dirsum G_{i-2}(-\bb)$ defined as $$d_i(f,g)=(\varphi_{i-1}(g)+\partial'_i(f),-\partial_{i-1}(g))$$ for $1\leq i\leq n+1$. For any set $B=\{i_1,\ldots,i_s\}\subseteq [n]$, let $e_B=e_{i_1\cdots i_s}$ and $e'_B=e'_{i_1\cdots i_s}$. Then $\{e_{[n]\setminus i}:\ 1\leq i\leq n\}$ is a basis for $G_{n-1}(-\bb)$.
Let $$\overline{d}_n:G_{n-1}(-\bb)\to F_{n-1}\dirsum G_{n-2}(-\bb)$$ be the $S$-module homomorphism with $\overline{d}_n(e_{[n]\setminus i})=d_n(e_{[n]\setminus i})=(x_i^{b_i}e'_{[n]\setminus i},-\partial_{n-1}(e_{[n]\setminus i}))$ for any $1\leq i\leq n$.
Then clearly  $\overline{d}_n$ is an injective map. Also from the equalities
$$d_n(e'_{[n]})=\partial'_n(e'_{[n]})=\partial'_n \varphi_n(e_{[n]})=\varphi_{n-1} \partial_n(e_{[n]})=d_n\partial_n(e_{[n]})$$
we obtain $d_n(F_n)\subseteq d_n(G_{n-1}(-\bb))=\overline{d}_n(G_{n-1}(-\bb))$. This implies that $\Im \overline{d}_n=\Im d_n$. Therefore
$$\begin{CD}
\overline{\DD}:\ 0 \rightarrow G_{n-1}(-\bb) @>\overline{d}_n>> F_{n-1}\dirsum G_{n-2}(-\bb)@>d_{n-1}>> F_{n-2}\dirsum G_{n-3}(-\bb) @>d_{n-2}>>\cdots
\end{CD}$$
is a free $S$-resolution of $S/I$.

\bigskip

Consider the complex $\overline{\DD}\tensor_SR$ with the differential maps $\sigma_i$.  We have $\overline{D}_{n}=\bigoplus_{i=1}^n Se_{[n]\setminus i}$. Hence $\overline{D}_{n}\tensor_SR=\bigoplus_{i=1}^n Re_{[n]\setminus i}$ and
$\sigma_n(e_{[n]\setminus i})=(x_{i}^{b_i}e'_{[n]\setminus i},-\partial_{n-1}(e_{[n]\setminus i}))$ for $1 \leq i \leq n$. Here for simplicity the residue class of the monomial $x_i^{b_i}$ in $R$ is denoted again by $x_i^{b_i}$.
Let $C$ be the kernel of $\sigma_n:\overline{D}_n\tensor_SR\to \overline{D}_{n-1}\tensor_SR$. Then by \Cref{vasconcelos},
$\tr(\omega_R)$ is the ideal in $R$ generated by the entries of a generating set of $C$. For any $1\leq i\leq n$, we have $\sigma_n((w/x_i^{b_i})e_{[n]\setminus i})=0$. Indeed, for any $j\neq i$ the coefficient of $e_{[n]\setminus \{i,j\}}$ in $\partial_{n-1}(e_{[n]\setminus i})$ is $\pm x_{j}^{a_j-b_j}$ and $(w/x_i^{b_i})x_{j}^{a_j-b_j}=0_R$, since $(w/x_i^{b_i})x_{j}^{a_j-b_j}$ as a monomial in $S$ is divisible by $x_j^{a_i}$. So $(w/x_i^{b_i})e_{[n]\setminus i}\in C$ for any $1\leq i\leq n$. Now, consider the element $z=\partial_n(e_{[n]})=\sum_{i=1}^n (-1)^i x_i^{a_i-b_i} e_{[n]\setminus i}\in \overline{D}_{n}\tensor_SR$. We have $\sigma_n(z)=(\sum_{i=1}^n (-1)^i x_i^{a_i}e'_{[n]\setminus i},-\partial_{n-1}\partial_n(z))=0$, which implies that $z\in C$.
Without loss of generality assume that $b_i>0$ for $1\leq i\leq r$ and $b_{r+1}=\cdots=b_n=0$.
Then by \Cref{vasconcelos}, $$(x_i^{a_i-b_i}:\ 1\leq i\leq r)+(w/x_i^{b_i}:\ 1\leq i\leq r)\subseteq \tr(\omega_R).$$

Now, consider an arbitrary multigraded element $h=\sum_{i=1}^n s_i u_ie_{[n]\setminus i}\in C$, where $u_i$'s are monomials  and $s_i\in K$. Then $\sum_{i=1}^n s_i x_i^{b_i}u_ie_{[n]\setminus i}=0$ and so $x_i^{b_i}u_i\in I$ for any $1\leq i\leq n$. This means either $x_i^{a_i-b_i}|u_i$ or $(w/x_i^{b_i})|u_i$. Therefore $$\tr(\omega_R)\subseteq (x_i^{a_i-b_i}:\ 1\leq i\leq r)+(w/x_i^{b_i}:\ 1\leq i\leq r).$$ Thus the equality holds.
\end{proof}

The next result characterizes monomial almost complete intersection rings of Teter type.

\begin{Theorem}
\label{compint}
Let $I=(x_1^{a_1},\ldots, x_n^{a_n}, x_1^{b_1}\cdots x_n^{b_n})\subseteq S$ be a monomial almost complete intersection. Then $R=S/I$ is of Teter type if and only if there exist $j\not=j'$ such that $2b_j\ge a_j$ and $2b_{j'}\ge a_{j'}$.
\end{Theorem}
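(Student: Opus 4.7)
My approach is to use \Cref{easy}, which says $R$ is of Teter type if and only if $\tr(\omega_R)$ is symmetric, combined with the combinatorial criterion in \Cref{symmetric}: symmetry of a poset ideal $\mathcal I$ amounts to $|\Gen(\mathcal I)|=|\Soc(\mathcal I)|$ together with a labelling of the socle elements making every generator-socle product equal as a monomial in $S$. Reindex so that $b_1,\ldots,b_r>0$ and $b_{r+1}=\cdots=b_n=0$ (by hypothesis $r\geq 2$), and set $T=\{i\in[r]:2b_i\geq a_i\}$, $U=[r]\setminus T$; the theorem asserts that $R$ is of Teter type iff $|T|\geq 2$.

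The first step is to compute the socle. A direct inspection of when $x^{\mathbf c}x_k\in I$ holds for every $k\in[n]$ forces $c_k=a_k-1$ for all but exactly one index $k_0\in[r]$, at which $c_{k_0}=b_{k_0}-1$; hence
\[
\Soc(R)=\{s_j : j\in[r]\},\qquad s_j:=x_j^{b_j-1}\prod_{i\neq j}x_i^{a_i-1}.
\]
Since $r\geq 2$, for each $j$ any $i\in[r]\setminus\{j\}$ satisfies $a_i-1\geq a_i-b_i$, so $g_i:=x_i^{a_i-b_i}$ divides $s_j$, giving $s_j\in\tr(\omega_R)$. Thus $\Soc(\tr(\omega_R))=\Soc(R)$ has exactly $r$ elements.

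Next I would determine $\Gen(\tr(\omega_R))$ from the generators of \Cref{tracemaci}, namely $g_i$ and $h_j:=w/x_j^{b_j}$ for $i,j\in[r]$. A short divisibility check shows $g_i\mid h_j$ iff $i\in T$ and $i\neq j$, while $h_j\mid g_i$ would force $h_j$ to be a pure power of $x_i$, which only happens when $r=2$ (with $i\neq j$ and $i\in U$). Reading off minimal generators case by case gives: $|\Gen|=r$ when $|T|\geq 2$; $|\Gen|=r+1$ when $r\geq 3$ and $|T|=1$; $|\Gen|=2r$ when $r\geq 3$ and $|T|=0$; and $|\Gen|=2$ when $r=2$, independently of $|T|$. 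In particular, when $r\geq 3$ and $|T|\leq 1$ we have $|\Gen|\neq|\Soc|$, so $R$ cannot be of Teter type by \Cref{symmetric}.

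When $|T|\geq 2$, $\Gen(\tr(\omega_R))=\{g_1,\ldots,g_r\}$, and the pairing $g_i\leftrightarrow s_i$ produces the constant product
\[
g_is_i=x_i^{a_i-b_i}\cdot x_i^{b_i-1}\prod_{k\neq i}x_k^{a_k-1}=\prod_{k=1}^n x_k^{a_k-1},
\]
so \Cref{symmetric} yields symmetry and $R$ is of Teter type. The remaining case $r=2$ with $|T|\leq 1$ is where the cardinality test is inconclusive and constitutes the main obstacle: here I would enumerate the two possible pairings of the two minimal generators with $\{s_1,s_2\}$ and verify, by inspecting the exponents of $x_1$ and $x_2$ in each of the four resulting monomials, that equality would force $a_j=2b_j$ for some $j\in U$, contradicting $2b_j<a_j$. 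With this last computation in hand the equivalence claimed in the theorem follows.
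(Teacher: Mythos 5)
Your proposal is correct and follows essentially the same route as the paper: compute $\tr(\omega_R)$ via \Cref{tracemaci}, identify $\Soc(\tr(\omega_R))$ with the $r$ socle monomials $s_j=x_j^{b_j-1}\prod_{i\neq j}x_i^{a_i-1}$, and apply the $\Gen$/$\Soc$ criterion of \Cref{symmetric} (the paper invokes the multigraded statement \Cref{unionsymmetric} rather than \Cref{easy}, and splits the negative direction into $r>2$ versus $r=2$ exactly as you do). The only quibble is in your summary of the final $r=2$ computation: one of the two pairings is ruled out because it forces $a_j=0$ for the index $j\in U$, not $a_j=2b_j$, but the enumeration itself goes through as you describe.
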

\begin{proof}
Without loss of generality we can assume $a_i\ge 2$ for all $i=1,\ldots,n$. We can also assume $b_1b_2\cdots b_r\not=0$ and $b_{r+1}=\cdots=b_n=0$ for some $2\le r\le n$. Let $w=x_1^{b_1}\cdots x_n^{b_n}$ and $\P$ be the divisor poset of $R$.
From \Cref{tracemaci} we know that

$$\tr(\P^*)=(x_1^{a_1-b_1},\ldots, x_n^{a_n-b_n},\frac{w}{x_1^{b_1}},\ldots,\frac{w}{x_n^{b_n}})=(x_1^{a_1-b_1},\ldots, x_n^{a_r-b_r},\frac{w}{x_1^{b_1}},\ldots,\frac{w}{x_r^{b_r}}).$$
Let $v=x_1^{a_1-1}x_2^{a_2-1}\cdots x_n^{a_n-1}$.
Then one can check that $\Soc(\P)=\{\frac{v}{x_1^{a_1-b_1}}, \ldots, \frac{v}{x_r^{a_r-b_r}}\}$.
We consider several cases:
\medskip

(1) There exist $j\not=j'$ such that $2b_j\ge a_j$ and $2b_{j'}\ge a_{j'}$. Without loss of generality, we may assume that $2b_1\ge a_1$ and $2b_2 \ge a_2$. Then $\frac{w}{x_1^{b_1}}$ is divisible by $x_2^{b_2}$ and therefore by $x_2^{a_2-b_2}$. Similarly, $\frac{w}{x_2^{b_2}}$ is divisible by $x_1^{b_1}$, and therefore by $x_1^{a_1-b_1}$. Each $\frac{w}{x_i^{b_i}}$, $3\le i\le r$, is divisible by $x_1^{b_1}x_2^{b_2}$ and thus by $x_1^{a_1-b_1}x_2^{a_2-b_2}$. In other words, $\tr(\P^*)=(x_1^{a_1-b_1},\ldots, x_r^{a_r-b_r})$. We have $\Gen(\tr(\P^*))=\{x_1^{a_1-b_1},\ldots, x_r^{a_r-b_r}\}$ and $\Soc (\tr(\P^*))=\{\frac{v}{x_1^{a_1-b_1}}, \ldots, \frac{v}{x_r^{a_r-b_r}}\}$. By \Cref{symmetric}, this is a symmetric ideal. In fact, the desired isomorphism has multidegree $v$. Therefore, $R$ is of Teter type.

\medskip
(2) There exists at most one $j$ such that $2b_j\ge a_j$. Without loss of generality, we assume $2b_2<a_2, \ldots, 2b_r<a_r$ and no conditions on $b_1$.

(a) $r>2$. Then among monomials $x_1^{a_1-b_1},\ldots, x_r^{a_r-b_r},\frac{w}{x_1^{b_1}}:=w_1$ no pair of monomials divides each other. Indeed, $\supp(w_1)=\{x_2,x_3,\ldots, x_r\}$ which contains at least $2$ variables, which implies that $w_1$ does not divide any other monomial. On the other hand, the $x_i$ exponent of $w_1$ is $b_i<a_i-b_i$ for all $2\le i\le r$ and thus no other monomial divides $w_1$. Therefore,
$|\Gen(\tr(\P^*))|>|\Soc(\tr(\P^*))|$, which by \Cref{symmetric} implies that $\tr(\P^*)$ is not symmetric and thus $R$ is not of Teter type.

(b) $r=2$. In this case $I=(x_1^{a_1},\ldots, x_n^{a_n}, x_1^{b_1}x_2^{b_2})$ with $2b_2<a_2$. Let $e:=\min\{b_1,a_1-b_1\}$. We have
$$\tr(\P^*)=(x_1^{a_1-b_1}, x_2^{a_2-b_2}, x_1^{b_1}, x_2^{b_2})=(x_1^e, x_2^{b_2})=:(t_1,t_2)$$
and
$$\Soc(\tr(\P^*))=\{x_1^{b_1-1}x_2^{a_2-1}, x_1^{a_1-1}x_2^{b_2-1}\}=:\{s_1,s_2\}.$$ The only possible isomorphisms can be given by $\varphi_1: s_1 \mapsto t_1, s_2 \mapsto t_2$ and $\varphi_2: s_1 \mapsto t_2, s_2 \mapsto t_1$. In order for $\varphi_1$ to be an isomorphism, we need to have $(e+b_1-1, a_2-1)=(a_1-1,2b_2-1)$, which is impossible since $2b_2<a_2$. Similarly, in order for $\varphi_2$ to be an isomorphism, we need to have $(b_1-1, a_2+b_2-1)=(e+a_1-1,b_2-1)$, which is also impossible since $a_2\not=0$.
\end{proof}

\begin{Example}
\label{Tschaikovsky}
(1) Let $R = K[x,y]/(x^3,y^3,xy)$. By \Cref{tracemaci} we know that $\tr(\P^*)=(x,y)$. By \Cref{symmetric}, this is not a symmetric ideal and therefore $R$ is not of Teter type in the multigraded sense. However, $R$ is of Teter type in the graded sense (and thus in the general sense) since $\varphi: (x^2)^*\mapsto x, (y^2)^*\mapsto y$ gives a desired graded isomorphism of degree $3$.

(2) Let $R = K[x,y]/(x^3,y^4,xy)$. It is not hard to see that $R=G/(0:\mm_G)$, where $G=K[x,y]/(xy, x^3+y^4)$. Therefore, $R$ is a Teter ring in the local sense. In particular, $R$ is of Teter type in the local sense. However, $R$ is not of Teter type in the graded sense. Indeed, $\Gen(\tr(\P^*))=\{x,y\}$ consists of monomials of the same degree, whereas $\Soc(\tr(\P^*))=\{x^2,y^3\}$ does not.
\end{Example}

Finally, \Cref{Beethoven} below is a special case of \Cref{compint}.  However, its proof is simple and does not depend on \Cref{compint}.

\begin{Theorem}
\label{Beethoven}
Let $S = K[x_1, \ldots, x_s]$ and $w_0 \in S$ a squarefree monomial of degree $> 1$.  Let $\P$ denote the divisor poset of $R = S/(x_1^2, \ldots, x_n^2, w_0)$ and $\tr(\P^*)$ the trace of the divisor poset $\P^*$ of $\omega_{R}$.  Then $\P$ is of Teter type.  Furthermore, $\Gen(\tr(\P^*))$ consists of those variables $x_i$ for which $x_i$ divides $w_0$.
\end{Theorem}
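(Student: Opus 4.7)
The plan is to apply \Cref{tracemaci} to pin down $\tr(\omega_R)$ exactly, then read off $\Gen$ and $\Soc$ of the corresponding poset ideal $\tr(\P^*)$ and verify the symmetry criterion of \Cref{symmetric}. Since $\deg w_0 \geq 2$ and $w_0$ is squarefree, the ideal $I = (x_1^2,\ldots,x_n^2,w_0)$ fits the almost complete intersection setup with $a_i = 2$ and $b_i \in \{0,1\}$, and at least two of the $b_i$'s equal $1$; so \Cref{tracemaci} applies. Setting $T = \{i : x_i \mid w_0\}$, it yields
\[
\tr(\omega_R) = (x_i : i \in T) + (w_0/x_i : i \in T).
\]
For each $i \in T$, the monomial $w_0/x_i$ is squarefree of degree $\geq 1$ supported on $T \setminus \{i\} \neq \emptyset$ (since $|T| \geq 2$), hence divisible by some $x_j$ with $j \in T$. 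Thus the second family is absorbed into the first, and $\Gen(\tr(\P^*)) = \{x_i : i \in T\}$, proving the ``Furthermore'' part.

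Next I would compute $\Soc(\tr(\P^*))$ by a direct poset argument. A monomial $u \in \tr(\P^*)$ lies in $\Soc(\tr(\P^*))$ iff for every $k$ the monomial $u x_k$ fails to lie in $\tr(\P^*)$; since $\tr(\P^*)$ is closed upward under multiplication by variables of $T$, this forces, for each $k$ with $x_k \nmid u$, the relation $w_0 \mid u x_k$ (otherwise $u x_k$ would still be a squarefree element of $\P$ hit by some $x_i$, $i\in T$). That relation forces $x_k \in T$ and $w_0/x_k \mid u$, i.e.\ every $x_j$ with $j \in T \setminus \{k\}$ divides $u$. Running this constraint over all missing variables shows that at most one variable of $u$ can be missing; and exactly one must be missing, for otherwise $u = x_1\cdots x_n$ would have $w_0 \mid u$, contradicting $u \in \P$. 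The missing variable must moreover lie in $T$. Therefore
\[
\Soc(\tr(\P^*)) = \Bigl\{\frac{x_1 x_2 \cdots x_n}{x_j} : j \in T\Bigr\}.
\]

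With $\Gen$ and $\Soc$ in hand, symmetry of $\tr(\P^*)$ is immediate from \Cref{symmetric}: both sets have cardinality $|T|$, and the pairing $x_j \longleftrightarrow (x_1\cdots x_n)/x_j$ satisfies
\[
x_j \cdot \frac{x_1 \cdots x_n}{x_j} = x_1 \cdots x_n,
\]
which is the same element of $\P$ for every $j \in T$. Hence $\tr(\P^*)$ is symmetric, and \Cref{unionsymmetric} then gives that $\P$ is of Teter type. The only real work is the socle computation; the absorption argument for the $w_0/x_i$'s and the final symmetry check are essentially bookkeeping, and the key point is the case analysis that forces exactly one missing variable, lying in $T$, so that $\Soc$ is indexed by the same set $T$ as $\Gen$ with the constant product $x_1 \cdots x_n$.
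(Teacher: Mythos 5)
Your proof is correct, but it takes a genuinely different route from the paper's. You deduce everything from \Cref{tracemaci}: the general almost complete intersection formula (with $a_i=2$, $b_i\in\{0,1\}$) gives $\tr(\omega_R)=(x_i : i\in T)+(w_0/x_i : i\in T)$, the second family is absorbed since $|T|\geq 2$, and then you verify symmetry via \Cref{symmetric} after computing the socle. The paper instead gives a deliberately self-contained combinatorial argument (it remarks that the point of \Cref{Beethoven} is precisely that its proof is ``simple and does not depend on'' the general classification): it first checks that the poset ideal $\I_0$ generated by the $x_i$ with $i\in T$ is symmetric, hence contained in $\tr(\P^*)$, and then proves the reverse inclusion directly by showing that any $\tau$-ideal containing a generator outside $\I_0$ would force its companion to have a socle element with too many upper neighbors. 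So the hard direction of the paper (every $\tau$-ideal lies in $\I_0$) is replaced in your argument by the appeal to \Cref{tracemaci}, whose proof is the algebraically heavy part of Section~4; what you gain is brevity and a completely explicit determination of $\Soc(\tr(\P^*))=\{(x_1\cdots x_n)/x_j : j\in T\}$, which the paper only asserts. Your socle computation is sound: the only point worth making explicit is the easy converse inclusion, namely that each $(x_1\cdots x_n)/x_j$ with $j\in T$ really does lie in $\tr(\P^*)$ (it is divisible by $x_i$ for any $i\in T\setminus\{j\}$, which is nonempty) and is annihilated by every variable, so that $\Gen$ and $\Soc$ are both indexed by $T$ and \Cref{symmetric} applies with constant product $x_1\cdots x_n$.
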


\begin{proof}
Let, say, $w_0 = x_1x_2 \cdots x_{i_0}$ and set $v_0 = x_1x_2 \cdots x_n$.  Then $\Soc(\P^*)$ consists of the monomials $v_0/x_1, v_0/x_2, \ldots, v_0/x_{i_0}$. \Cref{symmetric} guarantees that the poset ideal $\I_0$ with $\Gen(\I_0) = \{x_1, x_2, \ldots, x_{i_0}\}$ is symmetric.  Hence $\I_0 \subset \tr(\P^*)$.  Let $\I$ be a $\tau$-ideal of $\P$ and suppose that $\I \nsubseteq \I_0$.  Let, say, $u = x_{i_0+1}x_{i_0+2} \cdots x_{i_0+d} \in \Gen(\I)$.  Let $\J$ be a companion of $\I$ and $v \in \J$ is accompanied by $u$.  Then there is $1 \leq j_0 \leq i_0$ with $v = v_0/x_{j_0}$.  Since $u$ divides each of the monomials $v_0/x_1, v_0/x_2, \ldots, v_0/x_{i_0}$, it follows that each of the monomials $v_0/x_1u, v_0/x_2u, \ldots, v_0/x_{i_0}u$ divides $v = v_0/x_{j_0}$.  Thus $x_iu/x_{j_0} \in \P$ for each $1 \leq i \leq i_0$.  However, $x_iu/x_{j_0} \in \P$ only for $i = j_0$.  Hence $\I$ cannot be a $\tau$-ideal.  Thus $\I_0 = \tr(\P^*)$, as desired.
\end{proof}

\section{Divisor posets of simplicial complexes}

Let $\Delta$ be a simplicial complex on the vertex set $[n]=\{1, \ldots, n\}$. In other words, $\Delta$ is a collection of subsets of $[n]$ such that $\{i\} \in \Delta$ for $1 \leq i \leq n$ and that, if $F \in \Delta$ and $F' \subseteq F$, then $F' \in \Delta$.  Each element $F \in \Delta$ is called a {\em face} of $\Delta$.  A {\em facet} of $\Delta$ is a face $F$ of $\Delta$ for which $F \subsetneq F'$ for no $F' \in \Delta$.  A face $F$ of $\Delta$ is called {\em free} if there is a unique facet $F'$ of $\Delta$ with $F \subseteq F'$.

Let $S = K[x_1, \ldots, x_n]$ denote the polynomial ring in $n$ variables over a field $K$.  One associates each $F \subseteq [n]$ with the squarefree monomial $u_F = \prod_{i \in F} x_i$.  Let $I_\Delta$ denote the ideal of $S$ which is generated by the monomials $u_F$ with $F \not\in \Delta$.  Let $K\{\Delta\}$ denote the $0$-dimensional monomial $K$-algebra
\[
K\{\Delta\} = S/(I_\Delta, x_1^2, \ldots, x_n^2).
\]
The divisor poset $\P_\Delta$ of $K\{\Delta\}$ is the finite set $\{ u_F : F \in \Delta\}$ with the partial order $\preceq$ defined by $u_F \preceq u_{F'}$ if $F' \subseteq F$.  In particular, $\Soc(\P_\Delta)$ consists of the monomials $u_F$ for which $F$ is a facet of $\Delta$.  The $K$-algebra $K\{\Delta\}$ is Gorenstein if and only if $\Delta$ is the simplex on $[n]$, i.e., $\Delta$ consists of all subsets of $[n]$.  The empty face $\emptyset$ of $\Delta$ is free if and only if $\Delta$ is the simplex on $[n]$.

A simplicial complex $\Delta$ is called {\em flag} if $I_\Delta$ is generated by quadratic monomials.  In other words, $\Delta$ is flag if for any $F \subseteq [n]$ with $F \not\in \Delta$ there is $1 \leq i < j \leq n$ with $\{i, j\} \subseteq F$ such that $\{i, j\} \not\in \Delta$.

\begin{Theorem}
\label{trace_of_squarefree_divisor_poset}
Let $\Delta$ be a simplicial complex on $[n]$
and suppose that $\Delta$ is flag.  Let $\P_{\Delta}$ denote the divisor poset of $K\{\Delta\}$.  Then $\tr(\P_\Delta^*)$ is generated by the  monomials $u_F$ for which $F$ is a free face.
\end{Theorem}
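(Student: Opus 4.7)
The plan is to prove the two inclusions separately; the first is essentially formal, while the second is where the flag hypothesis enters.

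For $(u_F : F \text{ a free face}) \subseteq \tr(\omega_{K\{\Delta\}})$: given a free face $F$ with unique facet $F'$, the principal poset ideal of $u_F$ in $\P_\Delta$ coincides with the interval $\I_F := \{u_H : F \subseteq H \subseteq F'\}$, because freeness says every face of $\Delta$ containing $F$ is contained in $F'$. This $\I_F$ has a single maximal element $u_F$ and a single minimal element $u_{F'}$, so \Cref{symmetric} applied with $k = l = 1$ makes $\I_F$ trivially symmetric, hence a $\tau$-ideal; thus $u_F \in \I_F \subseteq \tr(\P_\Delta^*)$.

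For the reverse inclusion, let $u_H \in \tr(\P_\Delta^*)$. By \Cref{unionsymmetric} it lies in some symmetric $\tau$-ideal $\I$; pick $u_G \in \Gen(\I)$ with $G \subseteq H$. The key step is to show that $G$ is necessarily free, which will immediately give $u_H \in (u_G) \subseteq (u_F : F \text{ free})$. To analyze $\I$, I would invoke \Cref{symmetric} in its multigraded form to produce a monomial $m$ together with a bijection $u_{G'} \leftrightarrow u_{F'}$ between $\Gen(\I)$ and $\Soc(\I)$ satisfying $u_{G'}u_{F'} = m$ for every paired pair. Writing $T = \{i : x_i^2 \mid m\}$ and $E = \{i : x_i \mid m,\ x_i^2 \nmid m\}$, a coordinatewise comparison of exponent vectors in $u_{G'}u_{F'} = m$ forces $T \subseteq G' \cap F'$, $E = G' \triangle F'$, and $G' \cup F' \subseteq T \cup E$; equivalently $G' = T \cup (E \setminus F')$, and every socle facet $F'$ of $\I$ satisfies $T \subseteq F' \subseteq T \cup E$. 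In particular $G = T \cup (E \setminus F_1)$, where $F_1$ is the socle paired with $u_G$.

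Now suppose for contradiction that $G$ is not free. Since $\Delta$ is flag, this is equivalent to the existence of two vertices $j_1, j_2 \notin G$ with $G \cup \{j_1\}, G \cup \{j_2\} \in \Delta$ but $\{j_1, j_2\} \notin \Delta$. Choose facets $F^{(i)} \supseteq G \cup \{j_i\}$ for $i = 1, 2$. Each $F^{(i)}$ contains $G$, so by downward closure of $\I$ the monomial $u_{F^{(i)}}$ lies in $\Soc(\I)$, whence $F^{(i)} \subseteq T \cup E$. Because $j_1 \in F^{(1)} \subseteq T \cup E$ and $j_1 \notin G \supseteq T$, we obtain $j_1 \in E$; combined with $j_1 \notin G = T \cup (E \setminus F_1)$, this forces $j_1 \in F_1$, and symmetrically $j_2 \in F_1$. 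Hence $\{j_1, j_2\} \subseteq F_1 \in \Delta$, contradicting $\{j_1, j_2\} \notin \Delta$, so $G$ must be free. The main obstacle I anticipate is extracting the structural description $G' = T \cup (E \setminus F')$ with $F' \subseteq T \cup E$ cleanly from the abstract symmetry condition in \Cref{symmetric}; once that is in place, the flag hypothesis enters in just one line by trapping the two witnesses $j_1, j_2$ inside the single facet $F_1$.
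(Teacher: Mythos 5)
Your proposal is correct and follows essentially the same route as the paper: the forward inclusion via the one-generator, one-socle symmetric interval, and the reverse inclusion by placing the generator in a symmetric ideal, extracting the common product $m$ from \Cref{symmetric}, observing that every facet containing the generator lies in the socle, and using the flag hypothesis to force all such facets to agree. Your $T$/$E$ bookkeeping and proof by contradiction via the non-edge $\{j_1,j_2\}$ trapped in the socle partner $F_1$ is just a reorganization of the paper's argument that the union of the facets $G_1,\ldots,G_s$ containing the generator is itself a face, forcing $s=1$.
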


\begin{proof}
Let $F \in \Delta$ be a free face of $\Delta$ and $G$ a unique facet of $\Delta$ with $F \subseteq G$.  In the divisor poset $\P_\Delta$, write $\I$ for the poset ideal with $\Gen(\I) = \{u_F\}$.  Then $\Soc(\I) = \{u_G\}$.  Hence $\I$ is symmetric and then by \Cref{unionsymmetric}, $u_F \in \tr(\P_\Delta^*)$.

Conversely, let $F \in \Delta$ and suppose that $u_F \in \tr(\P_\Delta^*)$. Then $u_F$ is divisible by a minimal generator, say $u_{F'}$.  Therefore by \Cref{unionsymmetric} there is a symmetric ideal $\J$ for which $u_{F'} \in \Gen(\J)$.  Let $G_1, \ldots, G_s$ denote the facets of $\Delta$ containing $F'$.
Since $\J$ is symmetric, by \Cref{symmetric} there is a facet $G_0$ with $u_{G_0} \in \Soc(\J)$ and a face $F_i$ for which $u_{F'}u_{G_0}=u_{F_i}u_{G_i}$ for all $1\leq i\leq s$. This implies that $G_i \setminus F' \subseteq G_0$ and hence for $G = G_1 \cup \cdots \cup G_s$ we have $G \setminus F' \subseteq G_0$.  We show that $G$ is a face of $\Delta$. Since $\Delta$ is flag, it is enough to show that for any $1 \leq i_0 < j_0 \leq n$ with $\{i_0, j_0\}\subseteq G$, we have $\{i_0, j_0\}\in \Delta$.  If $\{i_0, j_0\} \subseteq G \setminus F'\subseteq G_0$ or $\{i_0, j_0\} \subseteq F'$, then we are done. Otherwise, we may assume that $i_0 \in G \setminus F'$ and $j_0 \in F'$. Then $\{i_0, j_0\} \in G_i$ for some $1 \leq i \leq s$.  Hence $\{i_0, j_0\} \in \Delta$.   Therefore $G$ is a face of $\Delta$ which implies $s = 1$ and then $G_1$ is a unique facet containing $F'$. Since $F'\subseteq F $, $F$ is a free face as well.
\end{proof}




\begin{Example}
\label{EXAMPLE_simplicial_complex}
(a)
Let $\Delta$ denote the simplicial complex on $[4]$ whose facets are $\{1,2,3\}$ and $\{3,4\}$.  Then $\tr(\P_\Delta^*) = (x_1, x_2, x_4)$ and by \Cref{symmetric}, $\P_\Delta$ is not of Teter type.  

\medskip

(b)
Let $\Delta$ denote the simplicial complex on $[4]$ whose facets are $\{1,2,3\}$ and $\{1,2,4\}$.  Then $\tr(\P_\Delta^*) = (x_3, x_4)$.  In the divisor poset $\P_\Delta$ the poset ideal $\I$ with $\Gen(\I) = \{x_3, x_4\}$ and $\Soc(\I) = \{x_1x_2x_3, x_1x_2x_4\}$ is symmetric by \Cref{symmetric}.  Thus $\P_\Delta$ is of Teter type.

\medskip

(c)
Let $\Delta$ denote the simplicial complex on $[5]$ whose facets are $$\{1,2,3\}, \, \, \, \{1,2,4\}, \, \, \, \{1,2,5\}.$$  Then $\tr(\P_\Delta^*) = (x_3, x_4, x_5)$ and $\P_\Delta$ is not of Teter type. 

\medskip

(d)
Let $\Delta$ denote the simplicial complex on $[6]$ whose facets are $$\{1,4,5\}, \, \, \, \{2,5,6\}, \, \, \, \{3,4,6\}, \, \, \, \{4,5,6\}.$$  Then $\tr(\P_\Delta^*) = (x_1, x_2, x_3, x_4x_5x_6)$ and $\P_\Delta$ is not of Teter type. 

\medskip

(e)
Let $\Delta$ denote the simplicial complex on $[6]$ whose facets are $$\{1,4\}, \, \, \, \{2,5\}, \, \, \, \{3,6\}, \, \, \, \{4,5,6\}.$$  Then $\tr(\P_\Delta^*) = (x_1, x_2, x_3, x_4x_5, x_5x_6, x_4x_6)$ and $\P_\Delta$ is not of Teter type.  

\medskip

(f)
Let $\Delta$ denote the simplicial complex on $[4]$ whose facets are $\{1,2\}$ and $\{3,4\}$.  Then $\tr(\P_\Delta^*) = (x_1, x_2, x_3, x_4)$.  Thus $K\{\Delta\}$ is nearly Gorenstein.  However, $\P_{\Delta}$ is not of Teter type.

\medskip

(g)
Let $n \geq 3$ and $\Delta_n$ the simplicial complex on $[n]$ whose facets are
\[
\{1, 2\}, \{2, 3\}, \ldots, \{n-1,n\}, \{1,n\}.
\]
If $n > 3$, then $\Delta_n$ is flag and $\tr(\P_\Delta^*) = (x_1x_2, x_2x_3, \ldots, x_{n-1}x_n, x_1x_n)$.  On the other hand, $\Delta_3$ is not flag.  The free faces of $\Delta_3$ are $\{1,2\}, \{2,3\}, \{1,3\}$.
However, $\tr(\P_\Delta^*) = (x_1, x_2, x_3)$.
This example shows that the assumption of being flag in \Cref{trace_of_squarefree_divisor_poset} can not be removed.
Note that $\Delta_n$ is of Teter type if and only if $n\in \{3,4\}$.

\medskip

\end{Example}

\begin{Example}
\label{real_projective_plane}

Let $\Delta$ be the standard triangulation of the real projective plane drawn below.

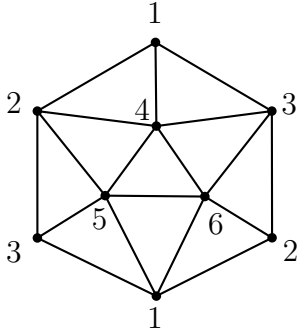
\begin{figure}[hbt]
\begin{center}
\psset{unit=0.78cm}

\begin{pspicture}(5.,5.)(15.,12.)

\psline(10.012710743801657,10.60320961682946)(7.999187077385428,9.431158527422998)
\psline(10.012710743801657,10.60320961682946)(11.996181818181823,9.431158527422998)
\psline(11.996181818181823,9.431158527422998)(12.,7.27)
\psline(7.999187077385428,9.431158527422998)(8.,7.26737)
\psline(12.,7.27)(10.027737039819689,6.2756363636363695)
\psline(8.,7.26737)(10.027737039819689,6.2756363636363695)
\psline(10.027737039819689,9.175711495116461)(9.156211870773857,7.988634109691968)
\psline(10.027737039819689,9.175711495116461)(10.854183320811424,7.973607813673937)
\psline(9.156211870773857,7.988634109691968)(10.854183320811424,7.973607813673937)
\psline(10.027737039819689,9.175711495116461)(10.012710743801657,10.60320961682946)
\psline(10.027737039819689,9.175711495116461)(7.999187077385428,9.431158527422998)
\psline(9.156211870773857,7.988634109691968)(7.999187077385428,9.431158527422998)
\psline(9.156211870773857,7.988634109691968)(8.,7.26737)
\psline(9.156211870773857,7.988634109691968)(10.027737039819689,6.2756363636363695)
\psline(10.854183320811424,7.973607813673937)(10.027737039819689,6.2756363636363695)
\psline(10.854183320811424,7.973607813673937)(12.,7.27)
\psline(10.854183320811424,7.973607813673937)(11.996181818181823,9.431158527422998)
\psline(10.027737039819689,9.175711495116461)(11.996181818181823,9.431158527422998)

\psdot(10.012710743801657,10.60320961682946)
\rput(10,11.10) {$1$}
\psdot(7.999187077385428,9.431158527422998)
\rput(7.6,9.57) {$2$}
\psdot(11.996181818181823,9.431158527422998)
\rput(12.29,9.56) {$3$}
\psdot(8.,7.26737)
\rput(7.6,7.03) {$3$}
\psdot(10.027737039819689,6.2756363636363695)
\rput(10,5.90) {$1$}
\psdot(12.,7.27)
\rput(12.31137, 7.04769) {$2$}
\psdot(10.027737039819689,9.175711495116461)
\rput(9.78409, 9.44769) {$4$}
\psdot(9.156211870773857,7.988634109691968)
\rput(9.05682, 7.59314) {$5$}
\psdot(10.854183320811424,7.973607813673937)
\rput(11.04046, 7.51) {$6$}
\end{pspicture}
\end{center}
\caption{Triangulation of real projective plane}
\label{Triangulation of real projective plane}
\end{figure}

\noindent
Note that $\Delta$ is not flag. Computation in Macaulay2 shows that $\tr(\P_\Delta^*)$ is generated by $x_ix_j$ with $1 \leq i < j \leq 6$. This example once again shows that the assumption of being flag in \Cref{trace_of_squarefree_divisor_poset} can not be removed.

\end{Example}

Let, in general, $P = \{a_1, \ldots, a_n \}$ be a finite poset and $L = \Jc(P)$ be the distributive lattice (\cite[Theorem 9.1.7]{HH}) consisting of all poset ideals of $P$, ordered by inclusion.  A {\em chain} of $L$ is a totally ordered subset of $L$.  A maximal chain of $L$ is a chain $C$ for which $C \subsetneq C'$ for no chain $C'$ of $L$.  The {\em order complex} of $P$ is the simplicial complex $\Delta(P)$ on $[n]$ whose faces are those $F \subseteq [n]$ such that $\{a_i : i \in F\}$ is a chain of $P$.  A {\em linear extension} of $P$ is a permutation $\pi : i_1 i_2 \cdots i_n$ of $[n]$ for which $a_{i_j} < a_{i_k}$ in $P$ implies  $j < k$.  If $\pi : i_1 i_2 \cdots i_n$ is a linear extension of $P$, then
\[
C_\pi : \emptyset \subset \{a_{i_1}\} \subset \{a_{i_1}, a_{i_2}\} \subset \cdots \subset \{a_{i_1}, \cdots, a_{i_n}\} = P
\]
is a maximal chain of $L$.  Furthermore, it can be seen that each maximal chain of $L$ is of the form $C_\pi$ for some linear extension $\pi$ of $P$.

Given a linear extension $\pi : i_1 i_2 \cdots i_n$ of $P$, let $j(1)$ denote the biggest integer for which $a_{i_1} < a_{i_2} < \cdots < a_{i_{j(1)}}$, and let $j(2)$ denote the biggest integer for which $a_{i_{j(1)+1}} < a_{i_{j(1)+2}} < \cdots < a_{i_{j(2)}}$.  Continuing these procedure yields a sequence $1 \leq j(1) < j(2) < \cdots < j(s-1) < j(s) = n$ of integers.  One defines the chain
\[
C_\pi^{\sharp}: \alpha_1 \subset \alpha_2 \subset \cdots \subset \alpha_{s-1},
\]
of $L$, where
\[
\alpha_q = \{ a_{i_1}, a_{i_2}, \ldots, a_{i_{j(q)}}\}.
\]

In particular, if $P$ is a chain, then $j(1) = n$ and $C_\pi^{\sharp} = \emptyset$.

\begin{Example}\label{posetll}
Let $P = \{a_1, a_2, a_3, a_4\}$ be the finite poset and $L = \Jc(P)$ the distributive lattice drawn below.

\begin{figure}[hbt]
\begin{center}
\psset{unit=0.6cm}

\begin{pspicture}(2.,3.)(24.,16.)
\psline(5.,8.)(5.,11.)
\psline(8.,11.)(8.,8.)
\psline(5.,11.)(8.,8.)
\psline(18.,6.)(16.,8.)
\psline(18.,6.)(20.,8.)
\psline(16.,8.)(18.,10.)
\psline(20.,8.)(18.,10.)
\psline(20.,8.)(22.,10.)
\psline(18.,10.)(20.,12.)
\psline(22.,10.)(20.,12.)
\psline(16.,12.)(18.,10.)
\psline(18.,14.)(20.,12.)
\psline(18.,14.)(16.,12.)

\psdot(5.,8.)
\psdot(5.,11.)
\psdot(8.,8.)
\psdot(8.,11.)
\psdot(16.,8.)
\psdot(18.,6.)
\psdot(20.,8.)
\psdot(18.,14.)
\psdot(22.,10.)
\psdot(20.,12.)
\psdot(16.,12.)
\psdot(18.,10.)

\rput(5.01937, 7.40509) {$a_1$}
\rput(8.03073, 7.42504) {$a_2$}
\rput(4.99943, 11.57313) {$a_3$}
\rput(8.01078, 11.59308) {$a_4$}

\rput(18.02, 5.35) {$\emptyset$}
\rput(15.22, 7.94) {$\{a_1\}$}
\rput(20.8, 7.84) {$\{a_2\}$}
\rput(16.5, 10.) {$\{a_1,a_2\}$}
\rput(23.32, 9.91) {$\{a_2,a_4\}$}
\rput(21.94, 12) {$\{a_1,a_2,a_4\}$}
\rput(14.22, 12) {$\{a_1,a_2,a_3\}$}
\rput(18., 14.68421) {$\{a_1,a_2,a_3,a_4\}$}
\end{pspicture}
\end{center}
\caption{The poset $P$ and the lattice $\Jc(P)$}
\label{Example 6.6}
\end{figure}

\noindent
Then the linear extensions of $P$ are
\[
\pi_1:1234, \, \, \, \, \pi_2:1243, \, \, \, \, \pi_3:2134, \, \, \, \, \pi_4:2143, \, \, \, \, \pi_5:2413
\]
and
\begin{eqnarray*}
C_{\pi_1}^{\sharp} & : & \{a_1\} \subset \{a_1,a_2,a_3\}, \\
C_{\pi_2}^{\sharp} & : & \{a_1\} \subset \{a_1,a_2,a_4\}, \\
C_{\pi_3}^{\sharp} & : & \{a_2\} \subset \{a_1,a_2,a_3\}, \\
C_{\pi_4}^{\sharp} & : & \{a_2\} \subset \{a_1,a_2\} \subset \{a_1,a_2,a_4\}, \\
C_{\pi_5}^{\sharp} & : & \{a_2,a_4\}.
\end{eqnarray*}
\end{Example}

We see that in \Cref{posetll}, $C_\pi$ is a unique maximal chain which contains $C_\pi^{\sharp}$. One can easily check that this is true in general.  Conversely,

\begin{Lemma}
\label{linear_extension}
Let $C_\pi$ be a maximal chain of $L$ and $C$ be a chain of $L$ for which $C_\pi$ is a unique maximal chain which contains $C$.  Then $C_\pi^{\sharp} \subset C$.
\end{Lemma}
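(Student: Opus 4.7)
The plan is to run a swap argument on the linear extension $\pi$ corresponding to $C_\pi$. I will show by contradiction that each $\alpha_q$ must lie in $C$, by constructing, whenever some $\alpha_q$ is missing from $C$, a second maximal chain containing $C$.

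First I would record the key combinatorial observation: if $\pi = i_1 i_2 \cdots i_n$ is a linear extension of $P$, then $a_{i_{j(q)}}$ and $a_{i_{j(q)+1}}$ are incomparable in $P$ for each $q=1,\ldots,s-1$. Indeed, by maximality of $j(q)$ in the definition of the ascending run ending there, $a_{i_{j(q)}} \not< a_{i_{j(q)+1}}$; and the defining property of a linear extension forbids $a_{i_{j(q)+1}} < a_{i_{j(q)}}$, since that would force $j(q)+1 < j(q)$. Exchanging two consecutive incomparable elements in a linear extension produces another linear extension, so the permutation
\[
\pi' \colon i_1 \cdots i_{j(q)-1}\, i_{j(q)+1}\, i_{j(q)}\, i_{j(q)+2} \cdots i_n
\]
is also a linear extension of $P$.

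Next I would compare the maximal chains $C_\pi$ and $C_{\pi'}$. By construction, every ideal in $C_{\pi'}$ except the one at height $j(q)$ agrees with the corresponding ideal in $C_\pi$. The height-$j(q)$ ideal of $C_\pi$ is precisely $\alpha_q = \{a_{i_1},\ldots,a_{i_{j(q)}}\}$, whereas the height-$j(q)$ ideal of $C_{\pi'}$ is $\alpha_q' = \{a_{i_1},\ldots,a_{i_{j(q)-1}},a_{i_{j(q)+1}}\}$. Since $a_{i_{j(q)}}$ and $a_{i_{j(q)+1}}$ are incomparable, $\alpha_q \neq \alpha_q'$. Therefore $C_{\pi'}$ is a maximal chain distinct from $C_\pi$ and $C_\pi \cap C_{\pi'} = C_\pi \setminus \{\alpha_q\}$.

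To finish, assume for contradiction that $\alpha_q \notin C$ for some $q \in \{1,\ldots,s-1\}$. Since $C \subseteq C_\pi$, we get $C \subseteq C_\pi \setminus \{\alpha_q\} \subseteq C_{\pi'}$, producing a second maximal chain of $L$ containing $C$, contradicting the uniqueness hypothesis. Hence $\alpha_q \in C$ for every $q = 1, \ldots, s-1$, which is exactly the assertion $C_\pi^\sharp \subset C$. There is no real obstacle here; the only mildly delicate point is verifying that the positions where a valid swap is available in $\pi$ are exactly $j(1),\ldots,j(s-1)$ (within an ascending run, consecutive elements are comparable and cannot be swapped), which is what pins down precisely the ideals $\alpha_q$ forming $C_\pi^\sharp$.
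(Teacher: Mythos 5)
Your proof is correct, and it takes a genuinely different route from the paper's. The paper argues by induction on the length of $C=\{\beta_1\subset\cdots\subset\beta_r\}$, with a case analysis on how $\beta_1$ and $\beta_2$ sit relative to $\alpha_1$ (the key step being that $\beta_1\subsetneq\alpha_1\subsetneq\beta_2$ is impossible because $\beta_2\setminus\beta_1$ would be a chain of $P$ extending the run $a_{i_1}<\cdots<a_{i_{j(1)}}$ past $j(1)$). You instead give a direct exchange argument: at each position $j(q)$ with $q<s$ the elements $a_{i_{j(q)}}$ and $a_{i_{j(q)+1}}$ are incomparable (not comparable one way by maximality of the run, not the other way by the linear extension property), so transposing them yields a linear extension $\pi'$ whose maximal chain $C_{\pi'}$ agrees with $C_\pi$ except at height $j(q)$, where $C_\pi$ has $\alpha_q$; hence if $\alpha_q\notin C$ then $C\subseteq C_\pi\setminus\{\alpha_q\}\subseteq C_{\pi'}$, violating uniqueness. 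All steps check out: the swap of adjacent incomparable elements preserves the linear extension property, initial segments of $\pi'$ are poset ideals, and $C\subseteq C_\pi$ is part of the hypothesis. Your argument is shorter and makes the meaning of $C_\pi^{\sharp}$ transparent --- the $\alpha_q$ are exactly the ``replaceable'' elements of $C_\pi$, the branch points of the chain --- whereas the paper's induction hides this behind the ``Clearly, $\beta_1\subset\alpha_1$'' step, which itself secretly rests on the same incomparability observation. Your closing remark about the swappable positions being \emph{exactly} $j(1),\ldots,j(s-1)$ is true but only the ``at least'' direction is needed for this lemma; the ``exactly'' direction is what underlies the converse statement (that $C_\pi$ is the unique maximal chain containing $C_\pi^{\sharp}$), which the paper asserts separately.
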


\begin{proof}
Following the notation $\pi, j(q)$ and $C_\pi^{\sharp}$ as above, let
$
C: \beta_1 \subset \beta_2 \subset \cdots \subset \beta_r.
$
We proceed by induction on $r$.  Clearly, $\beta_1 \subset \alpha_1$.  If $\beta_1 = \alpha_1$, then considering the interval
\[
[\alpha_1, P] = \{\alpha \in \Jc(P) : \alpha_1 \subset \alpha \},
\]
which is a distributive lattice, shows that $C_\pi^{\sharp} \setminus \alpha_1 \subset C \setminus \beta_1$.  Thus $C_\pi^{\sharp} \subset C$.  Suppose that $\beta_1 \subsetneq \alpha_1$.  If $\beta_2 \subset \alpha_1$, then $C_\pi$ is a unique maximal chain containing $C \setminus \beta_1$.  Thus $C_\pi^{\sharp} \subset C \setminus \beta_1$.  In particular, $C_\pi^{\sharp} \subset C$.  Let $\beta_1 \subsetneq \alpha_1 \subsetneq \beta_2$.  It then follows that $\beta_2 \setminus \beta_1$ is a chain of $P$.  Since $\alpha_1$ is a chain of $P$, it follows that
\[
a_{i_1} < a_{i_2} < \cdots < a_{i_{j(1)}} < a_{i_{j(1)+1}}.
\]
This contradicts the definition of $j(1)$.  Thus $\beta_1 \subsetneq \alpha_1 \subsetneq \beta_2$ cannot occur.
\end{proof}

\begin{Theorem}
\label{distributive_trace}
Let $L = \Jc(P)$ and let $\Delta(L)$ be its order complex. Then $\Gen(\tr(\P_{\Delta(L)}^*))$ consists of the monomials $u_{C_\pi^{\sharp}}$, where $\pi$ is a linear extension of $P$.  
\end{Theorem}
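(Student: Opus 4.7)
My plan is to combine \Cref{trace_of_squarefree_divisor_poset} with \Cref{linear_extension} and a uniqueness analysis of maximal chains in $L=\Jc(P)$. First I will observe that $\Delta(L)$ is flag: any subset of a poset (in particular of $L$) is a chain if and only if each of its two-element subsets is a chain, so a minimal non-face of $\Delta(L)$ must have two elements. Therefore \Cref{trace_of_squarefree_divisor_poset} applies and $\tr(\P_{\Delta(L)}^{*})$ is generated by the monomials $u_F$ with $F$ a free face of $\Delta(L)$. Since $u_{F'}\mid u_F$ iff $F'\subseteq F$, the minimal generators (i.e.\ $\Gen(\tr(\P_{\Delta(L)}^{*}))$) correspond exactly to the free faces which are minimal under inclusion.

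Next I will show that for every linear extension $\pi$ of $P$, the chain $C_\pi^{\sharp}$ is a free face whose unique maximal chain is $C_\pi$. The inclusion $C_\pi^{\sharp}\subseteq C_\pi$ is clear from the definition. For uniqueness, I set $\alpha_0=\emptyset$ and $\alpha_s=P$ and look at each interval $[\alpha_q,\alpha_{q+1}]$ of $L$. By construction, $\alpha_{q+1}\setminus \alpha_q=\{a_{i_{j(q)+1}},\ldots,a_{i_{j(q+1)}}\}$ is a chain of $P$, so $[\alpha_q,\alpha_{q+1}]$ is (lattice-)isomorphic to $\Jc$ of a finite chain, which is itself a chain. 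Hence there is only one way to ascend from $\alpha_q$ to $\alpha_{q+1}$ through a maximal chain of $L$, and that way coincides with $C_\pi$. This forces any maximal chain containing $C_\pi^{\sharp}$ to coincide with $C_\pi$ on every interval, hence to equal $C_\pi$.

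Finally I will combine this with \Cref{linear_extension}: every free face $F$, together with its unique maximal chain $C_\pi$, satisfies $C_\pi^{\sharp}\subseteq F$, and $C_\pi^{\sharp}$ is itself a free face by the previous step; so the minimal free faces are precisely the chains $C_\pi^{\sharp}$. Moreover, distinct linear extensions give incomparable minimal free faces: if $C_{\pi'}^{\sharp}\subseteq C_\pi^{\sharp}$, then $C_\pi^{\sharp}$ sits inside the unique maximal chain containing $C_{\pi'}^{\sharp}$, namely $C_{\pi'}$; hence $C_{\pi'}=C_\pi$ and thus $\pi'=\pi$. Passing to monomials yields $\Gen(\tr(\P_{\Delta(L)}^{*}))=\{u_{C_\pi^{\sharp}} : \pi \text{ a linear extension of } P\}$, as desired.

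The main technical obstacle is the uniqueness claim in the second paragraph, where one must rule out that a maximal chain of $L$ containing $C_\pi^{\sharp}$ takes a ``detour'' inside some interval $[\alpha_q,\alpha_{q+1}]$; this is precisely where the maximality in the definition of each $j(q)$ is used to guarantee that $\alpha_{q+1}\setminus\alpha_q$ is a chain in $P$ and hence that $[\alpha_q,\alpha_{q+1}]$ has a unique maximal chain.
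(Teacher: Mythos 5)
Your proof is correct and takes essentially the same route as the paper, whose entire argument is to invoke \Cref{trace_of_squarefree_divisor_poset} (applicable because $\Delta(L)$ is flag) together with \Cref{linear_extension}. The only difference is that you explicitly verify, via the interval decomposition $[\alpha_q,\alpha_{q+1}]\iso\Jc(\text{chain})$, the assertion that $C_\pi$ is the unique maximal chain containing $C_\pi^{\sharp}$ --- a step the paper leaves to the reader with ``one can easily check that this is true in general.''
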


\begin{proof}
Since $\Delta(L)$ is flag, the desired result follows from \Cref{trace_of_squarefree_divisor_poset} together with \Cref{linear_extension}.
\end{proof}

\begin{Corollary}
\label{distributive_of_teter_type}
Let $L = \Jc(P)$ and let $\Delta(L)$ be its order complex.  Then $\P_{\Delta(L)}$ is of Teter type in the category of 0-dimensional local $K$-algebras.
\end{Corollary}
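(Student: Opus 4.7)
The plan is to produce a surjective $R$-module homomorphism $\omega_R \to \tr(\omega_R)$ (with $R = K\{\Delta(L)\}$) by summing natural maps indexed by the linear extensions of $P$. First I would note that $\Delta(L)$ is flag — in any poset, a subset is a chain if and only if every two-element subset is a chain — so \Cref{trace_of_squarefree_divisor_poset} applies and, combined with \Cref{distributive_trace}, identifies $\Gen(\tr(\P_{\Delta(L)}^*))$ with $\{u_{C_\pi^\sharp} : \pi \text{ a linear extension of } P\}$. By the paragraph preceding \Cref{linear_extension}, $C_\pi$ is the unique facet of $\Delta(L)$ containing $C_\pi^\sharp$.

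For each linear extension $\pi$ of $P$, consider the interval
\[
\I_\pi \;=\; \{u_F : C_\pi^\sharp \subseteq F \subseteq C_\pi\}
\]
in $\P_{\Delta(L)}$. Both $\Gen(\I_\pi) = \{u_{C_\pi^\sharp}\}$ and $\Soc(\I_\pi) = \{u_{C_\pi}\}$ are singletons, so \Cref{symmetric} makes $\I_\pi$ a symmetric poset ideal; the associated multigraded isomorphism $\I_\pi^* \to \I_\pi$ of multidegree $u_{C_\pi^\sharp}u_{C_\pi}$ extends by zero to a multigraded $R$-module homomorphism $\varphi_\pi : \omega_R \to R$ whose image is the monomial ideal $I_\pi$ corresponding to $\I_\pi$. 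Explicitly, $\varphi_\pi(u_F^*) = u_{C_\pi^\sharp}u_{C_\pi}/u_F$ when $u_F \in \I_\pi$, is zero otherwise, and in particular $\varphi_\pi(u_{C_\pi}^*) = u_{C_\pi^\sharp}$.

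Now set $\varphi := \sum_\pi \varphi_\pi \in \Hom_R(\omega_R, R)$, summed over all linear extensions $\pi$ of $P$. The crux, which I view as the main obstacle, is the vanishing $\varphi_{\pi'}(u_{C_\pi}^*) = 0$ whenever $\pi \neq \pi'$. This reduces to showing $C_\pi \not\subseteq C_{\pi'}$ for $\pi \neq \pi'$, and this is immediate from the pureness of $\Delta(L)$: the finite distributive lattice $L = \Jc(P)$ is ranked by cardinality, so every maximal chain has exactly $|P|+1$ elements, forcing two facets in containment to coincide. Granted the vanishing, $\varphi(u_{C_\pi}^*) = u_{C_\pi^\sharp}$ for every $\pi$, so $\Im(\varphi)$ contains every minimal generator of $\tr(\omega_R)$. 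Together with $\Im(\varphi) \subseteq \tr(\omega_R)$ (which holds because each $\varphi_\pi(\omega_R) = I_\pi$ is a $\tau$-ideal), this gives $\Im(\varphi) = \tr(\omega_R)$, proving that $K\{\Delta(L)\}$ is of Teter type in the local sense. Note that $\varphi$ is typically neither graded nor multigraded, since the multidegrees $u_{C_\pi^\sharp}u_{C_\pi}$ depend on $\pi$; this matches the statement's restriction to the category of $0$-dimensional local $K$-algebras.
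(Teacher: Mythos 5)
Your proof is correct and follows essentially the same route as the paper: both decompose $\tr(\P_{\Delta(L)}^*)$ into the symmetric intervals $\I_\pi = [u_{C_\pi^{\sharp}}, u_{C_\pi}]$ indexed by the linear extensions of $P$ and then patch the resulting data into a single (non-multigraded) map. The only difference is presentational: the paper notes that these intervals are pairwise disjoint and cover the trace, so the trace is symmetric in the local sense, whereas you make the patching explicit as $\varphi = \sum_\pi \varphi_\pi$ and verify only the weaker non-interference fact that $u_{C_\pi} \notin \I_{\pi'}$ for $\pi \neq \pi'$, which already follows from the maximality of the chains $C_\pi$ without invoking purity.
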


\begin{proof}
Let $\I_\pi$ denote the interval $[u_{C_\pi^\sharp}, u_{C_\pi}]$ of $\P_{\Delta(L)}$, where $\pi$ is a linear extension of $P$.  It follows  from Corollary $2.3$ that $\I_\pi$ is a (multigraded) symmetric poset ideal.  Furthermore, if $\pi \neq \pi'$ then $\I_\pi \cap \I_{\pi'} = \emptyset$.  It then follows that  $\tr(\P_{\Delta(L)}^*) = \sqcup_\pi \I_\pi$, where $\pi$ ranges over the linear extensions of $P$. This implies that $\tr(\P_{\Delta(L)}^*)$ is a symmetric poset ideal in the local sense.  Hence $K\{\Delta(L)\}$ is of Teter type in the local sence.
\end{proof}

We now turn to the study of the independence complex of a finite simple graph.  Let $G$ be a finite simple graph on the vertex set $[n]$ and $E(G)$ be the set of edges of $G$.  We say that a subset $F \subset [n]$ is {\em independent} in $G$ if, for each $i \in F$ and $j \in F$ with $i \neq j$, one has $\{i, j \} \not\in E(G)$.  Thus in particular $\{i\}$ is independent for each $i \in [n]$.  Let $\Delta(G)$ denote the simplicial complex on $[n]$ whose faces are the independent subsets of $G$.  The simplicial complex $\Delta(G)$ is called the {\em independence complex} of $G$.  The independence complex $\Delta(G)$ is flag.  In fact,
\[
I_{\Delta(G)} = (x_i x_j : \{i,j\} \in E(G)).
\]
Conversely, given a flag complex $\Delta$ on $[n]$, there is a unique finite simple graph $G$ on $[n]$ with $\Delta = \Delta(G)$.  It would be of interest to describe the trace of $\P^*_{\Delta(G)}$ in terms of the combinatorics of $G$.

A {\em path graph} of length $n - 1$ is the finite simple graph $P_{n}$ whose edges are
\[
\{1,2\}, \, \, \{2,3\}, \, \, \ldots, \{n-1,n\}.
\]

\noindent
Our job is to find an explicit combinatorial description of $\Gen(\tr(\P^*_{\Delta(P_{n})}))$.

Let $(a_1, a_2, \ldots, a_s)$ be a sequence of integers with $1 \leq a_1 < a_2 < \cdots < a_s \leq n$.  We say that $(a_1, a_2, \ldots, a_s)$ is {\em permissible} if
\[
a_{i+1} - a_i \in \{2, 3\}, \, \, \, \, \, 0 \leq i \leq s,
\]
with setting $a_{0} = -1$ and $a_{s+1} = n + 2$.  Furthermore, we say that $(a_1, a_2, \ldots, a_s)$ is {\em $\tau$-permissible}  if $(a_{i-1}, a_i, a_{i+1}) = (a_i-2, a_i, a_i+2)$ for no $1 \leq i \leq s$ and if
\[
a_{i+1} - a_i \in \{2,3,4\}, \, \, \, \, \, 1 \leq i \leq s.
\]

\begin{Example}
\label{soc_gen_P_6}
Let $n = 7$. Then the permissible sequences are
\[
(1,3,5,7), (1,3,6), (1,4,6), (1,4,7), (2,4,6), (2,4,7), (2,5,7).
\]
The $\tau$-permissible sequences are
\[
(1,5), (3,5), (3,7), (3,6), (1,4,6), (1,4,7), (2,6), (2,4,7), (2,5).
\]
\end{Example}

\begin{Theorem}
\label{path_odd_even}
Let $n \geq 2$ and $\Delta(P_{n})$ the independence complex of $P_{n}$.  Let $F = \{a_1, a_2, \ldots, a_s\}$, where $1 \leq a_1 < a_2 < \cdots < a_s \leq n$, be a subset of $[n]$.  Then
\begin{itemize}
\item[(a)]
$u_F$ belongs to $\Soc(\P_{\Delta(P_{n})})$ if and only if $(a_1, a_2, \ldots, a_s)$ is permissible;
\item[(b)]
$u_F$ belongs to $\Gen(\tr(\P^*_{\Delta(P_{n})}))$ if and only if $(a_1, a_2, \ldots, a_s)$ is $\tau$-permissible.
\end{itemize}
\end{Theorem}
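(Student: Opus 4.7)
The plan is to reduce both parts to elementary combinatorial conditions on the gaps $a_{i+1}-a_i$ of the extended sequence $a_0=-1, a_1, \ldots, a_s, a_{s+1}=n+2$, by analysing how each gap controls (i) the existence of vertices that enlarge $F$ and (ii) the number of facets containing $F$.

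For part (a), I would first note that $\Soc(\P_{\Delta(P_n)})$ consists of the monomials $u_F$ with $F$ a facet of $\Delta(P_n)$, i.e.\ a maximal independent set of $P_n$. A subset $F\subseteq [n]$ is independent in $P_n$ precisely when $a_{i+1}-a_i\geq 2$ for every internal $i$, and a vertex $v\in [n]\setminus F$ can be added iff neither $v-1$ nor $v+1$ belongs to $F$. A straightforward case analysis on the three gap types (left-boundary, internal, right-boundary) shows that such an addable $v$ exists exactly when some extended gap has size $\geq 4$. This proves $F$ is a facet iff every extended gap lies in $\{2,3\}$, which is the definition of permissibility.

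For part (b), the starting point is \Cref{trace_of_squarefree_divisor_poset}: since $\Delta(P_n)$ is flag, the ideal $\tr(\P_{\Delta(P_n)}^*)$ is generated by the monomials $u_F$ for the free faces $F$, so its minimal generators correspond to the $\subseteq$-minimal free faces. The key intermediate claim I would prove is that $F$ is free iff every extended gap $d=a_{i+1}-a_i$ belongs to $\{2,3,4\}$. To this end, I would exhibit that the facets of $\Delta(P_n)$ containing $F$ biject with the independent choices, in each gap, of a maximal independent set in a subpath of length $\max(0,d-3)$ (the vertices of the gap that are not adjacent to $F$). Since a path of length $\ell$ has a unique maximal independent set iff $\ell\in \{0,1\}$, freeness of $F$ reduces to $d\leq 4$ for every extended gap.

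Next I would identify the inclusion-minimal free faces. Removing $a_i$ merges the two adjacent gaps $a_i-a_{i-1}$ and $a_{i+1}-a_i$ into a single gap of size $\geq 4$; since each summand is in $\{2,3,4\}$, the merged gap lies in $\{2,3,4\}$ iff both summands equal $2$, i.e.\ iff $(a_{i-1},a_i,a_{i+1})=(a_i-2,a_i,a_i+2)$. I would then argue that if some proper subset of $F$ is free, then already a single-element deletion is free: for two non-adjacent deletions the analysis decouples and forces a $\{2,2\}$-pattern at each deleted index, while two consecutive deletions produce a merged gap of size $\geq 6>4$, which is incompatible with freeness. Combining these observations identifies $\Gen(\tr(\P_{\Delta(P_n)}^*))$ precisely with the $\tau$-permissible sequences. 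I expect the main technical obstacle to be the uniform treatment of the two boundary gaps, because the vertices $a_1-1$ and $a_s+1$, although not in $F$, are adjacent to elements of $F$ and must be excluded from the ``free'' subpath; the phantom values $a_0=-1$ and $a_{s+1}=n+2$ are chosen precisely so that the single formula $\max(0,d-3)$, the permissibility range $\{2,3\}$, the freeness range $\{2,3,4\}$, and the forbidden boundary patterns at $i=1$ and $i=s$ all arise from one unified bookkeeping.
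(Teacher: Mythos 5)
Your argument is correct and follows the same overall strategy as the paper: both reduce part (b) via \Cref{trace_of_squarefree_divisor_poset} to identifying the inclusion-minimal free faces, and both analyse the extended gaps $a_{i+1}-a_i$ with the phantom endpoints $a_0=-1$ and $a_{s+1}=n+2$. The difference lies in how freeness is detected. The paper argues ad hoc in each direction: a gap $\geq 5$ is excluded by exhibiting the two incompatible extensions $F\cup\{a_i+2\}$ and $F\cup\{a_i+3\}$, while for the converse it writes down the candidate facet $G=F\cup\{a_i+2 : a_{i+1}-a_i=4\}$ and simply asserts its uniqueness and the minimality of $F$ among the free faces it contains. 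You instead prove a single counting lemma --- facets containing $F$ correspond bijectively to independent choices of a maximal independent set in the interior subpath of each gap --- from which the equivalence of freeness with all gaps lying in $\{2,3,4\}$ falls out in both directions at once; and your reduction of arbitrary deletions to single-element deletions (non-consecutive deletions decouple and each forces the $(2,2)$-pattern, consecutive deletions create a merged gap $\geq 6$) makes explicit the minimality step that the paper leaves to the reader. This buys a uniform, fully checkable argument at the cost of slightly more bookkeeping; it also resolves the boundary-gap ambiguity in the paper's definition of $\tau$-permissibility in the way its \Cref{soc_gen_P_6} intends. Part (a) is only asserted in the paper, and your gap analysis supplies the missing (routine) details.
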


\begin{proof}
(a)
It follows that $F = \{a_1, \ldots, a_s\}$ is a maximal independent subset of $P_{n}$ if and only if $(a_1, \ldots, a_s)$ is permissible.  Since $u_F$ belongs to $\Soc(\P_{\Delta(P_{n})})$ if and only if $F$ is a facet of $\Delta(P_{n})$, the desired result follows.

\medskip

(b)
Let $u_F\in \Gen(\tr(\P^*_{\Delta(P_{n})}))$. Then by \Cref{trace_of_squarefree_divisor_poset}, $F$ is a minimal free face of $\Delta$. On the contrary assume that $(a_1, a_2, \ldots, a_s)$ is not $\tau$-permissible.
Then there exists an integer $i$ such that either $a_{i+2}-a_i=a_i-a_{i-2}=2$ or $a_{i+1}-a_i\geq 5$. In the former case $F\setminus \{a_i\}$ is a free face as well which contradicts the minimality of $F$. In the latter case $F$ is not a free face. Indeed if by contradiction $G$ is the unique facet containing $F$, then $F_1=F\cup \{a_i+2\}$
and $F_2=F\cup \{a_i+3\}$ are also contained in $G$ which is impossible. Hence $F$ is not a free face and we get a contradiction.

Conversely, let $(a_1, \ldots, a_s)$ be $\tau$-permissible.  Let $W$ denote the set of integers $i$ for which $a_{i+1} - a_i = 4$, where $0 \leq i \leq s$.  It then follows that
\[
G = \{a_1, \ldots, a_s\} \cup \{ a_i + 2 : i \in W \}
\]
is a maximal independent subset of $P_{n}$ and that $F = \{a_1, \ldots, a_s\}$ is a free face of $\Delta(P_{n})$ with $F \subseteq G$.
From the definition of a $\tau$-permissible sequence one can see that $F$ is minimal among the free faces contained in $G$.  Hence the desired result follows from \Cref{trace_of_squarefree_divisor_poset}.
\end{proof}

\begin{Example}
\label{AGAIN_soc_gen_P_6}
Let $n = 9$.  Then $(1,3,5,7,9)$ is permissible.  Following the proof of \Cref{path_odd_even} with $G = \{1,3,5,7,9\}$ and removing $5, 1, 9$ in this order, one has $F_0 = \{1,3,7,9\}, F_1 = \{3,7,9\}, F_2 = \{3,7\}$ and $(3,7)$ is $\tau$-permissible.  On the other hand, removing $3, 7$ in this order, one has $F_0 = \{1,5,7,9\}, F_1 = \{1,5,9\}$ and $(1,5,9)$ is $\tau$-permissible.  Furthermore, $(1,5,7)$ and $(3,5,9)$ are also $\tau$-permissible, each of which is a subsequence of $(1,3,5,7,9)$.
\end{Example}

A {\em cycle of length $n$} is the finite simple graph $C_n$ on $[n]$ whose edges are
\[
\{1,2\}, \, \, \{2,3\}, \, \, \ldots, \{n-1,n\}, \, \, \, \{1, n\}.
\]
Let $i \in [n]$ and $j \in [n]$.  The {\em distance} between $i$ and $j$ in $C_n$ is
\[
{\rm dist}(i,j) = \min\{|j - i|, n - |j - i|\}.
\]
Let $F \subset [n]$.  We say that $i \in F$ and $j \in F$ are {\em adjacent} in $F$ if there is a sequence of edges
\[
(i_0, i_1), (i_1, i_2), (i_2, i_3), \ldots, (i_{s-2}, i_{s-1}), (i_{s-1}, i_s)
\]
for which $i_0 = i, \, i_s = j$ and none of $i_1, \ldots, i_{s-1}$ belongs to $F$.

Let $n \geq 3$ and $F \subset [n]$ with $|F| \geq 3$.  We say that $F$ is {\em permissible} if, for $i \in F$ and $j \in F$ which are adjacent in $F$, one has ${\rm dist}(i,j) \in \{2,3\}$.  Furthermore, we say that $F$ is {\em $\tau$-permissible} if the following conditions are satisfied:
\begin{itemize}
\item[(i)]
If $i \in F$ and $j \in F$ are adjacent in $F$, then ${\rm dist}(i,j) \in \{2,3,4\}$;
\item[(ii)]
There is no $i_0 \in F$ for which ${\rm dist}(i_0,i) = 2$ if $i \in F$ and $i_0$ are adjacent in $F$.
\end{itemize}
For example, in $C_9$, $\{1, 3, 5, 7\}$ is permissible and $\{1, 5, 7\}$ is $\tau$-permissible.  In $C_8$, $\{1, 4, 6\}$ is permissible and $\{1, 5\}$ is $\tau$-permissible.  In $C_5$, $\{1, 3\}$ is permissible as well as $\tau$-permissible.  In $C_4$, $\{1,3\}, \, \{2,4\}$ are permissible and $\{1\}, \{2\}, \{3\}, \{4\}$ are $\tau$-permissible.

We now come to an explicit combinatorial description of ${\rm Gen}(\tr(\P^*_{\Delta(C_n)}))$.  A proof of Theorem \ref{cycle_of_length_n} is  similar to that of Theorem \ref{path_odd_even} and is omitted.

\begin{Theorem}
\label{cycle_of_length_n}
Let $n \geq 3$ and $\Delta(C_n)$ the independence complex of $C_n$.  Let $F \subset [n]$.  Then
\begin{itemize}
\item[(a)]
$u_F$ belongs to $\Soc(\P_{\Delta(C_n)})$ if and only if one of the following conditions are satisfied:

{\rm (i)} $|F| \geq 3$ and $F$ is permissible;

{\rm (ii)} $|F| = 2$, say, $F = \{i, j\}$, and $${\rm dist}(i,j) \in \{2, 3\}, \, \, \, n - {\rm dist}(i,j) \in \{2, 3\};$$

{\rm (iii)} $|F| = 1$ and $n = 3$.

\item[(b)]
$u_F$ belongs to ${\rm Gen}(\tr(\P^*_{\Delta(C_n)}))$ if and only if the following conditions are satisfied:

{\rm (i)} $|F| \geq 3$ and $F$ is $\tau$-permissible;

{\rm (ii)} $|F| = 2$, say, $F = \{i, j\}$, and $${\rm dist}(i,j) \in \{2, 3, 4\}, \, \, \, n - {\rm dist}(i,j) \in \{2, 3, 4\};$$

{\rm (iii)} $|F| = 1$ and $n = 3$.
\end{itemize}
\end{Theorem}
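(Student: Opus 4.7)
The plan is to mirror the proof of \Cref{path_odd_even}, adjusted for the cyclic structure. For part (a), I reduce to describing the facets of $\Delta(C_n)$, i.e., the maximal independent sets of $C_n$. For part (b), since $\Delta(C_n)$ is a flag complex ($I_{\Delta(C_n)}$ is generated by the quadratic monomials $x_i x_j$ for $\{i,j\} \in E(C_n)$), \Cref{trace_of_squarefree_divisor_poset} applies, and $\Gen(\tr(\P_{\Delta(C_n)}^*))$ consists of the monomials $u_F$ indexed by minimal free faces $F$ of $\Delta(C_n)$.

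For part (a), given an independent subset $F$ of $[n]$, I look at its cyclic gaps, i.e., the maximal runs of vertices of $[n] \setminus F$ between consecutive elements of $F$ read around $C_n$. If $|F| \geq 3$, maximality of $F$ is equivalent to no gap admitting the insertion of a further independent vertex, which translates to every adjacent-in-$F$ pair being at cyclic distance in $\{2,3\}$; this yields (i). If $|F| = 2$, say $F = \{i, j\}$, then the cycle splits into two arcs of lengths ${\rm dist}(i,j) - 1$ and $n - {\rm dist}(i,j) - 1$, and maximality requires both ${\rm dist}(i,j)$ and $n - {\rm dist}(i,j)$ to lie in $\{2,3\}$; this gives (ii). If $|F| = 1$, maximality forces every other vertex of $[n]$ to be a neighbor in $C_n$ of the unique element of $F$, which is possible only for $n = 3$, giving (iii).

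For part (b), I characterize the (minimal) free faces of $\Delta(C_n)$. The key observation is that, for a face $F$ with cyclic gaps of lengths $\ell_1, \dots, \ell_s$, the facets of $\Delta(C_n)$ containing $F$ correspond bijectively to choices, one per gap, of a maximal independent subset of the corresponding open arc (viewed as a path with the two endpoints of the gap forbidden). A short check shows that an arc of length $\ell$ admits a unique such completion iff $\ell \leq 3$ (the empty completion for $\ell \in \{1,2\}$ and the singleton middle vertex for $\ell = 3$), while $\ell \geq 4$ yields at least two. Thus a face $F$ with $|F| \geq 3$ is free iff every adjacent-in-$F$ distance lies in $\{2,3,4\}$, which is condition (i) of $\tau$-permissibility. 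For minimality, removing $i_0 \in F$ merges the two gaps incident to $i_0$, of lengths $d_1 - 1$ and $d_2 - 1$, into a single gap of length $d_1 + d_2 - 1$; this must be $\leq 3$ for $F \setminus \{i_0\}$ to remain free, and since $d_1, d_2 \geq 2$ the only possibility is $d_1 = d_2 = 2$. Non-existence of such an $i_0$ is precisely condition (ii) of $\tau$-permissibility. The cases $|F| = 2$ and $|F| = 1$ are handled analogously by the corresponding arc analysis.

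The main obstacle will be the careful treatment of the low-cardinality cases, where the cyclic structure degenerates: when $|F| = 2$ the ``two gaps'' are the two full arcs of $C_n$ cut out by $F$ and the merging argument used for minimality no longer applies directly, while for $|F| = 1$ the single arc is the whole cycle minus a vertex and minimality has to be read off from whether $\emptyset$ is itself free. In each degenerate case I would directly verify the free and minimal-free conditions, checking that the resulting combinatorial restrictions match (ii) and (iii) respectively; once this is settled the remainder of the argument parallels the path case of \Cref{path_odd_even} verbatim, which is why the authors omit the details.
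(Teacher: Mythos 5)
Your approach is exactly the one the paper intends: the authors omit the proof with the remark that it is ``similar to that of Theorem \ref{path_odd_even}'', and your cyclic gap analysis is the right analogue. The key points are all correct: the facets containing a face $F$ factor as independent choices over the gaps; a gap with $\ell$ interior vertices admits a unique admissible completion iff $\ell\le 3$ (empty for $\ell\in\{1,2\}$, the middle vertex for $\ell=3$) and at least two for $\ell\ge 4$; and deleting $i_0\in F$ merges its two incident gaps, so minimality of a free face amounts to the non-existence of an $i_0$ both of whose adjacent distances equal $2$. This recovers conditions (i) and (ii) of $\tau$-permissibility for $|F|\ge 3$, and part (a) is the straightforward facet description.

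One genuine issue: your closing claim that the degenerate cases will ``match (ii) and (iii)'' is not borne out, and your own $|F|=1$ analysis exposes it. For $n=4$ the singleton $\{i\}$ lies in the unique facet $\{i,i+2\}$ of $\Delta(C_4)$ and is therefore free, while $\emptyset$ is not free; hence the minimal free faces of $\Delta(C_4)$ are the four singletons and $\Gen(\tr(\P^*_{\Delta(C_4)}))$ is $\{x_1,x_2,x_3,x_4\}$, not $\{x_1x_3,\,x_2x_4\}$ as clauses (ii) and (iii) of the printed statement would give (clause (iii) admits only $n=3$, and clause (ii) would wrongly promote $\{1,3\}$ and $\{2,4\}$ to minimal generators even though they are divisible by free singletons). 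The paper's own example just before the theorem lists $\{1\},\{2\},\{3\},\{4\}$ as the $\tau$-permissible sets of $C_4$, consistent with your computation and inconsistent with the formal statement. So when you carry out the degenerate cases you should record the correction --- clause (iii) holds for $n\in\{3,4\}$ and clause (ii) only for $n\ge 5$ --- rather than assert agreement with the statement as printed. Apart from this, the argument is sound.
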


\end{document}